\theoremstyle{thmstyleone}%
\newtheorem{theorem}{Theorem}%  meant for continuous numbers
\newtheorem{claim}{Claim}
\newtheorem{lemma}{Lemma}
\newtheorem{corollary}{Corollary}
\theoremstyle{thmstyletwo}%
\newtheorem{remark}{Remark}%
\theoremstyle{thmstylethree}%
\newtheorem{definition}{Definition}%
\DeclareMathOperator{\arctanh}{arctanh}
\DeclareMathOperator{\sgn}{sgn}
\DeclareMathOperator{\rank}{rank}
\DeclareMathOperator{\drind}{Ind}
\newcommand{\Ind}[1]{\mathbb{I}\left\{{#1}\right\}}
\newcommand{\RR}{\mathbb{R}}
\newcommand{\CC}{\mathbb{C}}
\newcommand{\ZZ}{\mathbb{Z}}
\newcommand{\NN}{\mathbb{N}}
\newcommand{\inv}[1]{{#1}^\ddagger}
\newcommand{\invinv}[1]{{#1}^{\ddagger\ddagger}}
\newcommand{\finv}[1]{{#1}^\dagger}
\newcommand{\drinv}[1]{{#1}^\text{D}}
\newcommand{\drlinv}[1]{{#1}^{\text{LD}}}
\newcommand{\drinvdrinv}[1]{{#1}^{\text{DD}}}
\newcommand{\ignore}[1]{}
\begin{document}

\title{Generalized Inversion of Nonlinear Operators}

\author*[1]{\fnm{Eyal} \sur{Gofer}}\email{eyal.gofer@ee.technion.ac.il}

\author[1]{\fnm{Guy} \sur{Gilboa}}\email{guy.gilboa@ee.technion.ac.il}

\affil[1]{\orgdiv{Faculty of Electrical and Computer Engineering}, \orgname{Technion – Israel Institute of Technology}, \orgaddress{\city{Haifa}, \postcode{3200003}, \country{Israel}}}

\abstract{
Inversion of operators is a fundamental concept in data
processing. Inversion of linear operators is well studied, supported by established theory. When an inverse either does not exist or is not unique, generalized inverses are used.
Most notable is the Moore-Penrose inverse, widely used in physics, statistics, and various fields of engineering. This work investigates generalized inversion of nonlinear operators.

We first address broadly the desired properties of generalized inverses, guided by the Moore-Penrose axioms.
We define the notion for general sets, and then a refinement, termed \emph{pseudo-inverse}, for normed spaces. 
We present conditions for existence and uniqueness of a pseudo-inverse and establish theoretical results investigating its properties, such as continuity, its value for operator compositions and projection operators, and others. Analytic expressions are given for the pseudo-inverse of some well-known,  non-invertible, nonlinear operators, such as hard- or soft-thresholding and ReLU. %
We analyze a neural layer and discuss relations to wavelet thresholding.

Next, the Drazin inverse, and a relaxation, are investigated for operators with equal domain and range. We present scenarios where inversion is expressible as a linear combination of forward applications of the operator. Such scenarios arise for classes of nonlinear operators with vanishing polynomials, similar to the minimal or characteristic polynomials for matrices. Inversion using forward applications may facilitate the development of new efficient algorithms for approximating generalized inversion of complex nonlinear operators.
}

\ignore{}

\keywords{generalized inverse, pseudo-inverse, inverse problems, nonlinear operators}

\maketitle

\section{Introduction}\label{sec:intro}
Operator inversion is a fundamental mathematical problem concerning various fields in science and engineering. The vast majority of research devoted to this topic is concerned with generalized inversion of \emph{linear operators}. However, \emph{nonlinear operators} are extensively used today in numerous domains, and specifically in machine learning and image processing.
Given that many nonlinear operators are not invertible, it is highly instrumental to formulate the generalized inverse of nonlinear operators and to analyze its properties.

Generalized inversion of linear operators has been studied extensively since the 1950's, following the paper of \citet{penrose_1955}. As noted in \cite{baksalary2021moore,ben2003generalized}, that work rediscovered, simplified, and made more accessible the definitions first made by \citet{moore1920reciprocal}, in what is referred to today as \emph{the Moore-Penrose inverse}, often also called the matrix pseudo-inverse. Essentially, a concise and unique definition is given for a generalized inverse of any matrix, including singular square matrices and rectangular ones. A common use of the Moore-Penrose (MP) inverse is for solving linear least-square problems.
As elaborated in the work of \citet{baksalary2021moore}, celebrating 100 years for its discovery, the MP inverse has broadened the understanding of various physical phenomena, statistical methods, and algorithmic and engineering techniques.
We hope this work can facilitate the use of nonlinear generalized inversion in the context of data science.

The generalized inversion of linear operators in normed spaces was analyzed by \citet{nashed1974unified,nashed1976unified}, where the relation to projections is made explicit. Additional properties were provided by \citet{wang2003metric,ma2014perturbations,cui2016some}, and the references therein. A detailed overview of various generalized inverse definitions and properties is available in the book of \citet{ben2003generalized}.
The representation of Drazin inverses of operators on a Hilbert space is examined in \citet{du2005representation}. See 
\citet{ilic2017algebraic,wang2018generalized}
for recent books on the algebraic properties of generalized inverses, idempotent and projection operators on Banach spaces, generalized Drazin invertibility, and computational aspects of Moore-Penrose and Drazin inversion.  
Iterative numerical algorithms for approximating the generalized inverse were proposed, for example, in \citep{stickel1987class,climent2001geometrical,pan2018efficient}.

The research related to generalized inversion of nonlinear operators has been very scarce. In \citep{zervakis1992iterative}  the notion of \emph{nonlinear pseudo-inverse} is given, for the first time, to the best of our knowledge. It is in the context of least square estimation in image restoration. This topic, however, is not further developed. Characteristics of the pseudo-inverse and issues such as existence and uniqueness are not discussed. 
The most comprehensive study on generalized inversion of nonlinear operators is in the work of \citet{dermanis1998generalized}. 
While focused on geodesy, he defines inversion broadly. % 
We significantly extend the initial results of Dermanis and attempt to establish a general theory, relevant to data science. 
In control applications (see \citep{mickle2004unstable,liu2007singular}), pseudo-inversion is used for the design of controllers for nonlinear dynamics. The inversion is meant to approximately cancel the dynamic of the plant. In these studies, inversion is discussed in a rather narrow applied context.  

The goal of this paper is to define broad notions of generalized inversion of nonlinear operators. We present several flavors of inverses and attempt to analyze their mathematical properties in various settings.
We use the following nomenclature: all types of inverses that hold some inversion properties are referred to as \emph{generalized inverses.} 
We refer to \emph{pseudo-inverse} specifically for the case of the Moore-Penrose inverse in the linear case and its direct extension in the nonlinear case. 
Pseudo-inverse is one type of generalized inverse. Others which will be discussed in more detail are $\{1,2\}$-inverse, Drazin inverse, and left-Drazin inverse.

The main contributions of the paper are: 
\begin{enumerate}
\item
We explain and illustrate the general concept of generalized inversion of nonlinear operators for sets, as well as its fundamental properties. It relies on the first two axioms of Moore and Penrose and generally is not unique. 
\item
For nonlinear operators in normed spaces a stronger definition can be formulated, 
based on best approximate solution, %
which directly coincides with Moore-Penrose in the linear setting. 
We show that although the first Moore-Penrose axiom is implied, the second one is still required explicitly, unlike the linear case.

\item 
Certain theoretical results are established, such as conditions for existence and uniqueness, the domain and possible continuity of the inverse of a continuous operator over a compact set, and various settings in which the inverse of an operator composed by several simpler operators can be inferred.
\item
    Analytic expressions of the pseudo-inverse for some canonical functions are given, as well as explicit computations of a neural-layer inversion and relations to wavelet thresholding.
\item We then focus on endofunctions ($T:V\rightarrow V$) and investigate the Drazin inverse and a relaxation thereof. For both general sets and vector spaces, we show scenarios where these inverses are expressible using forward applications of the operator. In particular, this generalizes the approach of using the Cayley-Hamilton theorem for expressing the inverse of a matrix.
    
\ignore{}
    \end{enumerate}
    
\ignore{
}

{\bf Some definitions and notations.}
For an operator $T:V\rightarrow W$ between sets $V$ and $W$, $T(V')$ denotes the image of $V'\subseteq V$ by $T$, and $T^{-1}(W')$ denotes the preimage of $W'\subseteq W$ by $T$. The restriction of $T$ to a subset $V'\subseteq V$ is denoted by $T|_{V'}$. The composition of operators $T_1:V_2\rightarrow V_3$ and $T_2:V_1\rightarrow V_2$ is denoted by $T_1\circ T_2$  or $T_1 T_2$. The set of all operators from $V$ to $W$ is denoted by $W^V$, and we write $|V|$ for the cardinality of the set $V$. An operator $T:V\rightarrow W$ is \textit{idempotent} or a \textit{generalized projection} iff $T\circ T = T$. We write $\arg\min_{v\in V}\{f(v)\}$ or $\arg\min\{f(v): v\in V\}$ for the set of elements in $V$ that minimize a function $f:V\rightarrow \RR$. The closed ball with center $a$ and radius $r$ is denoted by $\bar{B}(a,r)$, and the open ball by $B(a,r)$.
The notation $\RR^+$ specifies non-negative reals.
The indicator function of an event $A$ is denoted by $\Ind{A}$. The determinant of a matrix $A$ is denoted by $\det(A)$ and its rank by $\rank{A}$. We write $F[x]$ for the ring of univariate polynomials over a field $F$. The degree of a polynomial $p$ is denoted by $deg(p)$, and is undefined for the zero polynomial.

 \section{The Moore-Penrose Properties and Partial Notions of Inversion}\label{sec:semi inverse}

The Moore-Penrose inverse is defined in the linear domain as follows. 
\begin{definition}[Moore-Penrose pseudo-inverse]
\label{def:pseudo-inv}
Let $V$ and $W$ be finite-dimensional inner-product spaces over $\CC$. Let $T:V\rightarrow W$ be a linear operator with the adjoint $T^*:W\rightarrow V$. The pseudo-inverse of $T$ is a linear operator $T^\dagger:W\rightarrow V$, which admits the following identities: 
\begin{align*}
&\textup{MP1: } TT^\dagger T = T &
&\textup{MP3: } (TT^\dagger)^* = TT^\dagger \\
&\textup{MP2: } T^\dagger T T^\dagger = T^\dagger &
&\textup{MP4: } (T^\dagger T)^* = T^\dagger T
\end{align*}

\end{definition} 
 As was shown by \citet{penrose_1955}, this pseudo-inverse exists uniquely for every linear operator. It may be calculated, for example, using singular value decomposition (SVD, see, e.g., \citep{ben2003generalized}). The same work by Penrose showed that the pseudo-inverse is involutive, namely, $T^{\dagger\dagger}=T$. He also showed \citep{penrose1956best} that it yields a \textit{best approximate solution} (BAS) to the equation $Tv=w$. That is, for every $w\in W$, $\|Tv-w\|_2$ is minimized over $v\in V$ by $v^*=T^\dagger(w)$, and among all such minimizers, it uniquely has the smallest $L_2$ norm.

Let us now examine how the Moore-Penrose scheme can be adapted to nonlinear operators. A direct application, unfortunately, does not work. Recall that the adjoint of a linear operator satisfies $\langle Tv,w \rangle = \langle v,T^*w \rangle$ for every $v\in V$, $w\in W$. As can be easily shown, the properties of the inner product restrict $T$ to be linear (see Claim~\ref{cla:adjoint means linear} in the appendix). This means that the adjoint operation as defined here does not extend to nonlinear operators, and that MP3--4, which involve the adjoint, need to be replaced. Thus, MP1--2 are extended in a way that is suitable for nonlinear operators. 

It should be noted that any possible subset of the four MP properties (and indeed other possible properties) may serve as the basis for the definition of a different type of inverse. These various inverses have been studied intensively for linear operators (see \citet{ben2003generalized}). In keeping with the notation of \citep{ben2003generalized}, the MP pseudo-inverse is referred to as a $\{1,2,3,4\}$-inverse. For any subset $\{i,j,\ldots, k\}\subseteq\{1,2,3,4\}$ one may also talk of $T\{i,j,\ldots, k\}$, the set of all $\{i,j,\ldots, k\}$-inverses of the operator $T$. The same notations will be used here for a nonlinear operator $T$ as well. Let us first look closer at $\{1,2\}$-inverses of nonlinear operators.

\section{The \{1,2\}-Inverses of Nonlinear Operators}\label{subsec:(1,2)}
It is important to note that $V$ and $W$ are no longer required to be inner-product or even vector spaces, and they may in fact be any two general nonempty sets. The following lemma pinpoints the nature of a $\{1,2\}$-inverse. %

\begin{lemma}\label{lem: equivalent meaning of MP1-2 different spaces}
Let $T:V\rightarrow W$ and $\inv{T}:W\rightarrow V$, where $V$ and $W$ are nonempty sets. The statement $\inv{T}\in T\{1,2\}$ is equivalent to the following: $\forall w\in T(V)$, $\inv{T}(w)\in T^{-1}(\{w\})$, and $\forall w\notin T(V)$, $\inv{T}(w)=\inv{T}(w')$ for some $w'\in T(V)$.
\end{lemma}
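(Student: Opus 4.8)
The plan is to translate each of the Moore--Penrose conditions MP1 and MP2 into pointwise statements and match them against the two clauses of the claimed characterization. Throughout, write (A) for the property ``$\inv{T}(w)\in T^{-1}(\{w\})$ for all $w\in T(V)$'' and (B) for ``for all $w\notin T(V)$ there is $w'\in T(V)$ with $\inv{T}(w)=\inv{T}(w')$''; the goal is to show $\inv{T}\in T\{1,2\}$, i.e.\ MP1 $\wedge$ MP2, is equivalent to (A) $\wedge$ (B).

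First I would handle MP1. The identity $T\inv{T}T=T$ as maps $V\to W$ says $T(\inv{T}(T(v)))=T(v)$ for every $v\in V$. Since $v\mapsto T(v)$ surjects onto $T(V)$, this is equivalent to $T(\inv{T}(w))=w$ for every $w\in T(V)$, which is precisely (A). So MP1 $\Leftrightarrow$ (A), with no extra hypotheses. (Note also that (A) makes the $w\in T(V)$ instance of (B) automatic, by taking $w'=w$, which is why the statement only needs to spell out the case $w\notin T(V)$.)

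Next, assuming (A), I would analyze MP2, namely $\inv{T}(T(\inv{T}(w)))=\inv{T}(w)$ for all $w\in W$. For $w\in T(V)$ this is automatic: (A) gives $T(\inv{T}(w))=w$, hence $\inv{T}(T(\inv{T}(w)))=\inv{T}(w)$. For $w\notin T(V)$, set $w':=T(\inv{T}(w))$, which always lies in $T(V)$; then MP2 reads $\inv{T}(w')=\inv{T}(w)$, exactly clause (B). Conversely, if (A) and (B) both hold, the same case split recovers MP2: the case $w\in T(V)$ is as above, and for $w\notin T(V)$ clause (B) supplies $w'\in T(V)$ with $\inv{T}(w)=\inv{T}(w')$, whence $T(\inv{T}(w))=T(\inv{T}(w'))=w'$ by (A), so $\inv{T}(T(\inv{T}(w)))=\inv{T}(w')=\inv{T}(w)$. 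Chaining the two steps gives MP1 $\wedge$ MP2 $\Leftrightarrow$ (A) $\wedge$ (B), as required.

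I do not expect a genuine obstacle here; the only points needing care are the quantifier bookkeeping in the first step --- passing between ``for all $v\in V$'' and ``for all $w\in T(V)$'' via surjectivity of $T$ onto its image --- and the observation that in MP2 the inner term $T(\inv{T}(w))$ always lands in $T(V)$, which is exactly what lets (A) close the converse direction.
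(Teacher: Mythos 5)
Your proposal is correct and follows essentially the same route as the paper's proof: a pointwise case split on whether $w$ lies in $T(V)$, with the key observation that $w'=T(\inv{T}(w))$ always lands in $T(V)$ in the forward direction. Your packaging as ``MP1 $\Leftrightarrow$ (A)'' followed by ``given (A), MP2 $\Leftrightarrow$ (B)'' is a slightly tidier logical decomposition of the same argument, but there is no substantive difference.
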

\begin{proof}
  Assume that $\inv{T}\in T\{1,2\}$. If $w\in T(V)$, then $w=T(v)$ for some $v\in V$. By MP1,
  $w=T(v)=T\inv{T}T(v)=T\inv{T}(w)$, 
  so $\inv{T}(w)\in T^{-1}(\{w\})$. If $w\notin T(V)$, let $w'=T(\inv{T}(w))$, so $w'\in T(V)$, and by MP2, $\inv{T}(w') =\inv{T}(w)$.

  In the other direction, for every $v$ we have $T(v)\in T(V)$, so $\inv{T}(T(v))\in T^{-1}(\{T(v)\})$, implying that $T\inv{T} T(v)=T(v)$, satisfying MP1. As for MP2, if $w\in T(V)$, we have $\inv{T}(w)\in T^{-1}(\{w\})$, so $T\inv{T}(w)=w$, and thus
  $\inv{T} T\inv{T}(w)=\inv{T}(w)$. If $w \notin T(V)$, then $\inv{T}(w)=\inv{T}(w')$ for some $w'\in T(V)$, and we already know that $\inv{T} T\inv{T}(w')=\inv{T}(w')$. As a result, $\inv{T} T\inv{T}(w)=\inv{T}(w)$, which completes the proof.
\end{proof}

Lemma~\ref{lem: equivalent meaning of MP1-2 different spaces} provides a recipe for constructing a $\{1,2\}$-inverse $\inv{T}$ of an operator $T$. First, for each element $w$ in the image of $T$, define $\inv{T}(w)$ as some arbitrary element in its preimage $T^{-1}(\{w\})$.
Second, for any element $w$ not in the image of $T$, select some arbitrary element in $T(V)$, and use its (already defined) inverse as the value for $\inv{T}(w)$. 

Thus, MP1--2 leave us with two degrees of freedom in defining the inverse. One is in selecting a subset $V_0$ of $V$, which contains exactly one source of each element in $T(V)$. It is easy to see that this set is exactly $\inv{T}T(V)$. The other is an arbitrary mapping $P_0$ from $W\setminus T(V)$ to $T(V)$, which may be extended to a mapping from $W$ to $T(V)$ by defining it as the identity mapping on $T(V)$. This mapping clearly equals $T\inv{T}$. The following theorem summarizes the resulting picture.

\begin{theorem}\label{thm:properties of new inverse}
Let $T:V\rightarrow W$ be an operator, let $V_0\subseteq V$ contain exactly one source for each element in $T(V)$, and let $P_0:W\rightarrow T(V)$ satisfy that its restriction to $T(V)$ is the identity mapping. Then the following hold.
\begin{enumerate}

    \item 
    The restriction $T|_{V_0}$ is a bijection from $V_0$ onto $T(V)$.
    \item 
    The function $\inv{T} = (T|_{V_0})^{-1} P_0$ is a $\{1,2\}$-inverse of $T$ that uniquely satisfies the combined requirements MP1--2, $T\inv{T}=P_0$, and $\inv{T}T(V)=V_0$.
    \item 
    Applying the construction of part~2 to $\inv{T}$ with the set $W_0 = T(V)$ and mapping $Q_0 = \inv{T} T$, yields $\invinv{T} = (\inv{T}|_{W_0})^{-1}Q_0=T$.
    \item 
The constructions of parts 2 and 3 generalize the MP pseudo-inverse for linear operators. Specifically, 
if $V=\CC^n$, $W=\CC^m$, and $T$ is linear, then picking $V_0 = T^\dagger T(V)$ and $P_0 = TT^\dagger$ yields $\inv{T}=(T|_{V_0})^{-1} P_0 = T^\dagger$ and picking $W_0 = T^\dagger(V)$ and $Q_0 = T^\dagger T$ yields 
$\invinv{T} = (\inv{T}|_{W_0})^{-1}Q_0=T^{\dagger\dagger}=T$.
    \item
    The functions $T\inv{T}$ and $\inv{T} T$ are idempotent.
   \item 
   If $T$ is a bijection of $V$ onto $W$, then $\inv{T} = T^{-1}$ is the only $\{1,2\}$-inverse of $T$.

\end{enumerate}
\end{theorem}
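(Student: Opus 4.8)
The plan is to route every part through the characterization of $\{1,2\}$-inverses in Lemma~\ref{lem: equivalent meaning of MP1-2 different spaces}, after first recording the one structural fact that makes the formula $\inv{T}=(T|_{V_0})^{-1}P_0$ meaningful. The hypothesis on $V_0$ says precisely that each fiber $T^{-1}(\{w\})$ with $w\in T(V)$ meets $V_0$ in exactly one point; since these fibers partition $V$ and every point of $V_0\subseteq V$ is sent by $T$ into $T(V)$, this is the same as saying $T|_{V_0}$ is a bijection of $V_0$ onto $T(V)$, which is Part~1. Hence $(T|_{V_0})^{-1}\colon T(V)\to V_0$ is defined, and as $P_0$ takes values in $T(V)$, the composite $\inv{T}=(T|_{V_0})^{-1}P_0\colon W\to V_0\subseteq V$ is well defined.

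For the bulk of Part~2 I would check the listed requirements in turn. For $w\in T(V)$ we have $P_0(w)=w$, so $\inv{T}(w)=(T|_{V_0})^{-1}(w)\in T^{-1}(\{w\})$; for $w\notin T(V)$ we have $w':=P_0(w)\in T(V)$ and $\inv{T}(w)=(T|_{V_0})^{-1}(w')=\inv{T}(w')$; hence $\inv{T}\in T\{1,2\}$ by Lemma~\ref{lem: equivalent meaning of MP1-2 different spaces}. The identity $T\inv{T}=P_0$ is immediate since $(T|_{V_0})^{-1}P_0(w)$ lies in $V_0$, where $T$ inverts $(T|_{V_0})^{-1}$; and $\inv{T}T(V)=(T|_{V_0})^{-1}(T(V))=V_0$ by the surjectivity in Part~1. (The same computation shows $\inv{T}T$ restricts to the identity on $V_0$, a fact I reuse below.) The uniqueness clause is the first delicate point: given any $S\colon W\to V$ obeying MP1--2, $TS=P_0$, and $ST(V)=V_0$, note that for $w\in T(V)$ we get $S(w)\in S(T(V))=V_0$ and $TS(w)=P_0(w)=w$, so $S(w)$ is the unique point of $V_0$ over $w$, i.e.\ $S(w)=\inv{T}(w)$; while for $w\notin T(V)$, $S\in T\{1,2\}$ lets Lemma~\ref{lem: equivalent meaning of MP1-2 different spaces} produce $w'\in T(V)$ with $S(w)=S(w')$, and applying $T$ together with that lemma again gives $P_0(w)=TS(w)=TS(w')=w'$, whence $S(w)=S(w')=\inv{T}(w')=(T|_{V_0})^{-1}(P_0(w))=\inv{T}(w)$.

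Parts~3--6 are then bookkeeping on top of Part~2. For Part~3 I would first verify that $(\inv{T},\,W_0=T(V),\,Q_0=\inv{T}T)$ meets the hypotheses of the construction: $\inv{T}(W)=(T|_{V_0})^{-1}(P_0(W))=(T|_{V_0})^{-1}(T(V))=V_0$; the restriction $\inv{T}|_{T(V)}=(T|_{V_0})^{-1}$ is a bijection $T(V)\to V_0$, so $T(V)$ contains exactly one $\inv{T}$-source of each element of $V_0=\inv{T}(W)$; and $Q_0=\inv{T}T$ maps $V$ onto $V_0$ and is the identity there. Part~2 applied to $\inv{T}$ then gives $\invinv{T}=(\inv{T}|_{T(V)})^{-1}Q_0=(T|_{V_0})\circ(\inv{T}T)$, which sends $v$ to $T(\inv{T}T(v))=T(v)$ by MP1 (using $\inv{T}T(v)\in V_0$), so $\invinv{T}=T$. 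Part~4 drops out of the uniqueness in Parts~2--3: $T^\dagger$ is a $\{1,2\}$-inverse with $TT^\dagger=P_0$ and $T^\dagger T(V)=V_0$ because $TT^\dagger$ and $T^\dagger T$ are the orthogonal projectors onto $T(V)$ and $(\ker T)^\perp=\mathrm{range}(T^\dagger T)$, so $\inv{T}=T^\dagger$, and one more application (using $T^{\dagger\dagger}=T$) yields $\invinv{T}=T$. Part~5 is MP1--2 directly: $(T\inv{T})(T\inv{T})=T(\inv{T}T\inv{T})=T\inv{T}$ and $(\inv{T}T)(\inv{T}T)=(\inv{T}T\inv{T})T=\inv{T}T$. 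Part~6 follows because a bijective $T$ forces $T(V)=W$ with every fiber a singleton, so $V_0=V$ and $P_0=\mathrm{id}_W$ are the only admissible choices and $\inv{T}=T^{-1}$; in fact Lemma~\ref{lem: equivalent meaning of MP1-2 different spaces} already pins down $\inv{T}(w)$ as the unique element of $T^{-1}(\{w\})$ for every $w$. The only genuine obstacles are the uniqueness argument in Part~2 and the hypothesis-checking for $\inv{T}$ in Part~3; both come down to applying the $w\notin T(V)$ half of the lemma carefully, not to any new idea.
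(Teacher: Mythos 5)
Your proposal is correct and follows essentially the same route as the paper: everything is funneled through Lemma~\ref{lem: equivalent meaning of MP1-2 different spaces}, with the uniqueness clause of part~2 doing the work for parts~3 and~4. The only substantive variations are cosmetic improvements — in part~3 you verify $\invinv{T}=T$ by the direct computation $T(\inv{T}T(v))=T(v)$ rather than by re-invoking uniqueness, and in part~4 you replace the paper's explicit SVD dimension count with the standard facts that $TT^\dagger$ and $T^\dagger T$ are the orthogonal projectors onto $\mathrm{range}(T)$ and $(\ker T)^\perp$, which is arguably cleaner.
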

\begin{proof}
\begin{enumerate}
    \item 
Immediate, since $V_0$ contains exactly one source for each element in $T(V)$.
\item
    If $w\in T(V)$ then $P_0$ maps it to itself, and $(T|_{V_0})^{-1}$ then maps it to one of its sources. If $w\notin T(V)$ then $(T|_{V_0})^{-1} P_0$ maps it to the inverse of an element in $T(V)$. By Lemma~\ref{lem: equivalent meaning of MP1-2 different spaces}, $\inv{T}$ satisfies MP1--2. We have that $T\inv{T} = T(T|_{V_0})^{-1} P_0= P_0$ and 
    \begin{align}
    \inv{T} T(V) &= (T|_{V_0})^{-1} P_0 T(V) \\
    & = (T|_{V_0})^{-1}T (V)\\
    & = V_0\;.
    \end{align}
As for uniqueness, Lemma~\ref{lem: equivalent meaning of MP1-2 different spaces} describes the degrees of freedom in defining a $\{1,2\}$-inverse. For $w\in W\setminus T(V)$, $T\inv{T}(w)$ is clearly the element in $T(V)$ whose inverse is associated with $w$. Otherwise, $w\in T(V)$ and $\inv{T} T(V)=V_0$. Since $V_0$ contains exactly one source for $w$, there is no choice in defining its inverse.
\item
First we need to verify that $W_0\subseteq W$ contains exactly one source under $\inv{T}$ for each element in $\inv{T}(W)$. We have by parts 1 and 2 that $\inv{T}(W) = V_0$ and $W_0=T(V)$ has exactly one source under $\inv{T}$ for each element in $V_0$. We also have that $Q_0=\inv{T} T$ maps $V$ to $\inv{T}(W)$ and its restriction to $\inv{T}(W)$ is the identity mapping. By part~2 we thus have that defining $\invinv{T} = (\inv{T}|_{W_0})^{-1}Q_0$ complies with MP1--2 and satisfies $\inv{T}\invinv{T}=Q_0$ and $\invinv{T}\inv{T}(W)=W_0$. To show that $\invinv{T} = T$ we can use the uniqueness of the inverse shown in part~2. It is clear that $T\in\inv{T}\{1,2\}$ by Lemma~\ref{lem: equivalent meaning of MP1-2 different spaces}. In addition, $\inv{T} T=Q_0$ by definition, and $T\inv{T}(W) = T(V)=W_0$, and we are done.
\item
Let $V=\CC^n$, $W=\CC^m$, let $T(v)=Av$ for a complex $m$ by $n$ matrix $A$, and let $A^\dagger$ be its MP inverse. We set $V_0 = A^\dagger A (V)$ and $P_0 = AA^\dagger$. Thus, $P_0:W\rightarrow A(V)$, and for every $w=Av$, 
\begin{equation}
P_0(w) = AA^\dagger Av = Av = w\;,
\end{equation}
as required. In addition, $A(V_0) = AA^\dagger A(V) = A(V)$, so every element in $A(V)$ has at least one source in $V_0$. To show that there is exactly one source, it is enough to show that $V_0$ and $A(V)$, which are both vector spaces, have the same dimension. Let $A = U_1\Sigma U_2^*$ be the singular value decomposition of $A$, where $U_1$ and $U_2$ are unitary and $\Sigma$ is a generalized diagonal matrix. Then the MP inverse of $A$ is given by $A^\dagger = U_2\Sigma^\dagger U_1^*$, where $\Sigma^\dagger_{ji} = \Sigma^{-1}_{ij}\Ind{\Sigma_{ij}\neq 0}$. Thus $AA^\dagger = U_1 D_1 U_1^*$ and $A^\dagger A = U_2 D_2 U_2^*$, where $D_1 = \Sigma \Sigma^\dagger$ and $D_2 =  \Sigma^\dagger\Sigma$ are diagonal matrices with only ones and zeros, and the same trace. As a result, both $P_0(W)$ (which equals $A(V)$) and $V_0$ have an equal dimension that is the common trace of $D_1$ and $D_2$, as we wanted to show. 

The fact that this construction in fact yields $A^\dagger$ follows since 
\begin{equation}
A^\dagger = A^\dagger A A^\dagger = AP_0 = (A|_{V_0})^{-1}P_0\;,
\end{equation}
where the last equality holds since $A$ is a bijection from $V_0$ to $A(V)=P_0(W)$. As for $A^{\dagger\dagger}$ (part~3), both our inverse and the MP inverse yield $A$ (see \citep{penrose_1955}, Lemma~1), so they coincide again.
\item
Directly from MP1--2, we have that $T\inv{T} T\inv{T} = T\inv{T}$ and $\inv{T} T \inv{T} T = \inv{T} T$.
\item 
By Lemma~\ref{lem: equivalent meaning of MP1-2 different spaces}, there are no degrees of freedom in defining $\inv{T}$, since $W\setminus T(V)$ is empty and there is a single source for every element. On the other hand, it is clear that $T^{-1}$ satisfies MP1--2.%
\end{enumerate}
\end{proof}

It is instructive to consider the symmetry, or lack thereof, between $T$ and $\inv{T}$. The roles of $T$ and $\inv{T}$ are completely symmetric in MP1--2, and it is therefore possible to define $\invinv{T}=T$ if only MP1--2 have to be satisfied. The construction given in part~2 of Theorem~\ref{thm:properties of new inverse} does not restrict the degrees of freedom allowed by MP1--2, yet embodies them (through $V_0$ and $P_0$) in a way that is not necessarily symmetric in the roles of $T$ and $\inv{T}$. The specific construction for $\invinv{T}$ in part~3 aims to preserve the symmetry between $T$ and $\inv{T}$ and achieves the desired involution property of the inverse, namely, $\invinv{T} = T$. 

Ultimately, Theorem~\ref{thm:properties of new inverse} describes $T$ as a composition of an endofunction of $V$, $\inv{T} T$, and a bijection, $T|_{V_0}$, where $V_0 = \inv{T} T(V)$. Symmetrically, $\inv{T}$ is a composition of an endofunction of $W$, $T\inv{T}$, and a bijection $\inv{T}|_{W_0}$, which is the inverse of the bijection that is part of $T$.
These compositions, $T=T(\inv{T} T)$ and $\inv{T}=\inv{T}(T\inv{T})$, are inherent in MP1--2. The endofunctions above are also idempotent, or generalized projections. It should be noted that such operators do not require a metric, like conventional metric projections. The full scheme is depicted in Figure \ref{fig:symmetric scheme}.
\begin{figure*}[t]
\centering
\includegraphics[scale=0.6,trim={4cm 6cm, 5cm, 6cm},clip]{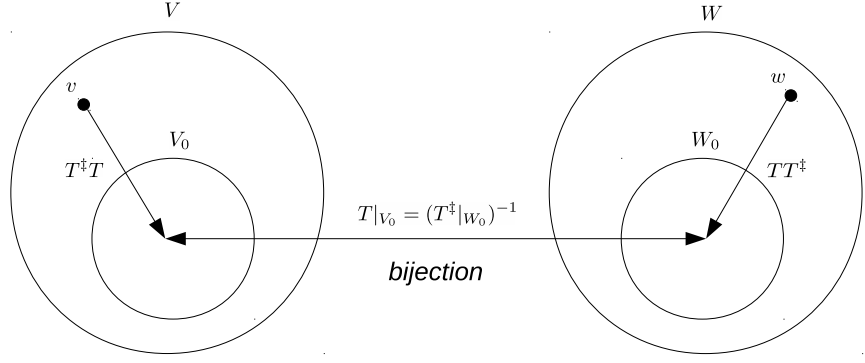} 
\caption{A symmetric depiction of the $\{1,2\}$-inverse scheme, as detailed in Theorem~\ref{thm:properties of new inverse}. The inverse `projects' a point $w$ to $W_0=T(V)$ and then maps it to $V_0$ using $T|_{V_0}^{-1}$ (equivalently, $\inv{T}|_{W_0}$), which is a proper bijection from $W_0$ onto $V_0$. In the other direction, $T$ equivalently first `projects' a point $v$ onto $V_0$, where $T$ is then a proper bijection onto $W_0$. The generalized projections are done by $\inv{T} T$ in $V$ and by  $T\inv{T}$ in $W$.}
\label{fig:symmetric scheme}
\end{figure*}

If one seeks to assign a particular $\{1,2\}$-inverse for every operator from $V$ to $W$ and indeed also from $W$ to $V$, then that is always possible by following the recipe of part~2 of Theorem~\ref{thm:properties of new inverse} or Lemma~\ref{lem: equivalent meaning of MP1-2 different spaces}. In addition, if this assignment is a bijection $F:W^V\rightarrow V^W$, then the involution property may be satisfied for all operators by defining the inverse of $F(T)$ for $T:V\rightarrow W$ as $T$. Note that this definition is proper since $F$ is a bijection, and that $\inv{T}\in T\{1,2\}$ implies $T\in\inv{T}\{1,2\}$ since the roles of the operator and the inverse in MP1--2 are symmetrical.

The existence of a bijection between $W^V$ and $V^W$ (equivalently, $|W^V|=|V^W|$) is a necessary condition for all operators to have an involutive $\{1,2\}$-inverse. For example, if $|V^W|<|W^V|$, then there are two different operators $T_1,T_2\in W^V$ that are assigned the same inverse in $\inv{T}\in V^W$, and clearly the inverse of $\inv{T}$ cannot be equal to both.

\subsection{The \{1,2\}-Inverse of Generalized Projections}\label{subsec:generalized projections}
Every idempotent endofunction $E:V\rightarrow V$ has that $E^3=E$. Consequently, $E\in E\{1,2\}$. This may not hold for the MP inverse. For example, the matrix 
\begin{equation}
E= \begin{pmatrix} 0 & 0\\1 & 1 \end{pmatrix}\;    
\end{equation}
is idempotent, yet its MP inverse is
\begin{equation}
E^\dagger = \begin{pmatrix} 0 & 0.5\\0 & 0.5 \end{pmatrix}\;.
\end{equation}

There are classes of idempotent operators where the MP inverse is necessarily the operator itself. For a linear $T:\CC^n\rightarrow\CC^m$, $TT^\dagger$ and $T^\dagger T$ are not merely idempotent endomorphisms. SVD yields that both $TT^\dagger$ and $T^\dagger T$ have the self-adjoint matrix form $UDU^*$, where $U$ is unitary and $D$ is square diagonal with ones and zeros on the diagonal. It is easy to see that both $TT^\dagger$ and $T^\dagger T$ are their own MP inverses by checking MP1--4 directly. The form $UDU^*$ describes a metric projection onto a vector subspace, which is in particular a closed convex subset of the original vector space. 
For considering metric projections in more general scenarios, the following concept will be key.
\begin{definition}
A nonempty subset $C$ of a metric space $V$, with a metric $d$, is a Chebyshev set, if for every $v\in V$, there is exactly one element $v'\in C$ s.t. $d(v,v') = \inf_{u\in C} d(v,u)$. The projection operator onto $C$,  $P_C:V\rightarrow V$, may thus be uniquely defined at $v$ by setting $P_C(v) = v'$.\footnote{This definition of course applies to normed spaces and Hilbert spaces.}
\end{definition}
By the Hilbert projection theorem (see, equivalently, \citep{rudin87book}, Theorem~4.10), every nonempty, closed, and convex set in a Hilbert space is a Chebyshev set. 
\begin{claim}\label{cla:inverse of projection}
Let $V$ be a metric space, $C\subseteq V$ a Chebyshev set, and $P_C:V\rightarrow V$ the projection onto $C$.
\begin{enumerate}
    \item The operator $P_C$ is idempotent.
    \item It holds that $P_C$ is its own $\{1,2\}$-inverse, but if $|C|>1$ and $C\subsetneq V$, it is not unique.
    \item In the particular case where $V$ is a Hilbert space and $C$ is a closed subspace of $V$, it holds that $P_C$ is a linear operator and a $\{1,2,3,4\}$-inverse of itself.
\end{enumerate}
\end{claim}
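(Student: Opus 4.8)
The plan is to verify each of the three parts in turn, leaning on the structural results already established. For part~1, I would argue that idempotency of $P_C$ is essentially built into the definition of a Chebyshev set: if $v'=P_C(v)\in C$, then the unique nearest point of $C$ to $v'$ is $v'$ itself, since $d(v',v')=0\le d(v',u)$ for all $u\in C$. Hence $P_C(P_C(v))=P_C(v')=v'=P_C(v)$, so $P_C\circ P_C=P_C$.

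For part~2, the key observation is that every idempotent endofunction $E$ satisfies $E^3=E$, which immediately gives $EEE=E$ (MP1) and $EEE=E$ again as $EEE\cdot$—more precisely, MP1 reads $EEE=E$ and MP2 reads $EEE=E$, so both hold with $\inv{E}=E$; this is exactly the remark made just before the claim, applied to $E=P_C$. So $P_C\in P_C\{1,2\}$. For non-uniqueness when $|C|>1$ and $C\subsetneq V$, I would invoke Theorem~\ref{thm:properties of new inverse}, part~2: a $\{1,2\}$-inverse is determined by the choice of $V_0$ (one source per image point) and $P_0$ (a retraction of $W=V$ onto the image $T(V)=C$). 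Since $P_C$ restricted to $C$ is the identity, $P_C$ is injective on $C$, so the image is $C$ and each point of $C$ is its own unique source — thus $V_0=C$ is forced. But $P_0$ is a free choice of map $V\setminus C\to C$, and since $|C|>1$ there is more than one such map, and since $C\subsetneq V$ the domain $V\setminus C$ is nonempty; different choices of $P_0$ give genuinely different $\{1,2\}$-inverses (the value $P_0$ equals $P_C\inv{(P_C)}$, which differs). Hence the inverse is not unique.

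For part~3, with $V$ a Hilbert space and $C$ a closed subspace, linearity of the orthogonal projection $P_C$ is the standard consequence of the Hilbert projection theorem together with the orthogonal-decomposition $V=C\oplus C^\perp$; I would cite this as known (it is implicit in the discussion preceding the definition). Given linearity, $P_C$ is a linear idempotent, i.e. an orthogonal projection, so it is self-adjoint: $P_C^*=P_C$. Then MP1 and MP2 hold as in part~2 (an idempotent is its own $\{1,2\}$-inverse), and MP3, MP4 read $(P_CP_C)^*=P_CP_C$ and $(P_CP_C)^*=P_CP_C$, which follow from $P_C^2=P_C$ and $P_C^*=P_C$. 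Hence $P_C\in P_C\{1,2,3,4\}$.

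The only real subtlety — and the step I would flag as needing a word of care rather than being a genuine obstacle — is part~2's non-uniqueness: one must confirm that the structural freedom identified in Theorem~\ref{thm:properties of new inverse} genuinely yields \emph{distinct} operators under the stated hypotheses, i.e. that both "knobs" ($|C|>1$ giving at least two target choices, $C\subsetneq V$ giving a nonempty domain to vary over) are used. Everything else is a direct application of $E^3=E$ for idempotents, the definition of a Chebyshev set, and the standard linearity/self-adjointness of orthogonal projections in Hilbert space.
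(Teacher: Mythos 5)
Your proposal is correct and follows essentially the same route as the paper: part~1 from the definition of a Chebyshev set, part~2 from the idempotency remark ($E^3=E$) plus the degrees-of-freedom analysis (the paper invokes Lemma~\ref{lem: equivalent meaning of MP1-2 different spaces} directly rather than Theorem~\ref{thm:properties of new inverse}, but the argument is identical: the sources on $C$ are forced, the retraction $V\setminus C\to C$ is free, and the two hypotheses guarantee a choice different from $P_C$), and part~3 by reducing MP3--4 to self-adjointness.

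The one place to tighten is your justification of self-adjointness in part~3. You write that $P_C$ is ``a linear idempotent, i.e.\ an orthogonal projection, so it is self-adjoint''; read as an implication, this is false, and the paper's own example $E=\left(\begin{smallmatrix}0&0\\1&1\end{smallmatrix}\right)$ immediately preceding the claim is a linear idempotent that is not self-adjoint. What makes $P_C$ self-adjoint is not idempotency but the fact that it is the \emph{metric} (nearest-point) projection onto a closed subspace, which forces $v-P_C(v)\in C^\perp$; the paper then verifies $\langle P_C(u),v\rangle=\langle P_C(u),P_C(v)\rangle=\langle u,P_C(v)\rangle$ explicitly from the decomposition $V=C\oplus C^\perp$. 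You already invoke that decomposition for linearity, so the fix is one line, but as written the inference is the wrong one.
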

\begin{proof}
\begin{enumerate}
    \item 
    For every $v\in V$ we have $P_C(v)\in C$, and for every $v\in C$, $P_C(v)=v$, therefore, for every $v\in V$, $P_C^2(v)=P_C(v)$.
    \item
    Since $P_C$ is idempotent, clearly $P_C\in P_C\{1,2\}$. By Lemma~\ref{lem: equivalent meaning of MP1-2 different spaces}, we may define a $\{1,2\}$-inverse $\inv{P}_C$ as $\inv{P}_C(v)=v$ for every $v\in C$, and $\inv{P}_C(v)=\inv{P}_C(T(v))$ for every $v\notin C$, for any arbitrary $T:V\setminus C\rightarrow C$. Thus, $\inv{P}_C(v)=T(v)$ for every $v\notin C$. Since $|C|>1$ and $C\subsetneq V$, we may choose $T$ that does not coincide with $P_C$.
    \item
    Note that any subspace is clearly convex, so any closed subspace is a Chebyshev set by the Hilbert projection theorem. The fact that the projection onto a closed subspace is linear is well known (see, e.g., \citep{rudin87book}, Theorem~4.11). To show that $P_C^\dagger=P_C$, it remains to show that MP3--4 are satisfied. Since $P_C^2=P_C$, both MP3 and MP4 would be satisfied if $P_C^*=P_C$, or equivalently, if for every $u,v\in V$, $\langle P_C(u),v \rangle = \langle u,P_C(v) \rangle$. This indeed holds, since 
    \begin{align}
    \langle P_C(u),v \rangle &= \langle P_C(u),P_C(v)+P_{C^\perp}(v) \rangle\\
    &= \langle P_C(u),P_C(v) \rangle \\
    &=  \langle P_C(u)+P_{C^\perp}(u),P_C(v) \rangle\\
    &=  \langle u,P_C(v) \rangle\;,
    \end{align}
    completing the proof.
\end{enumerate}
\end{proof}

Part~3, given here for completeness, relates the result back to the linear case, and for $V=\CC^n$ yields the (unique) MP inverse.

\section{A Pseudo-Inverse for Nonlinear Operators in Normed Spaces}\label{sec:axioms nonlinear}

We now examine the concept of pseudo-inverse, which applies to normed spaces and coincides with MP1--4 for matrices. For this reason we will use the notation $\finv{T}$ for this type of inverse as well. We note that this definition does not use the adjoint operation and can be applied to nonlinear operators.

\begin{definition}[Pseudo-inverse for normed spaces]
\label{def:generalized pseudo-inv}
Let $V$ and $W$ be subsets of normed spaces over $F$ ($\RR$ or $\CC$), and let $T:V\rightarrow W$ be an operator. A pseudo-inverse of $T$ is an operator $\finv{T}:W\rightarrow V$ that satisfies the following requirements:% 
\ignore{

}
\begin{align*}
  \textup{BAS: } &\forall w\in W, \finv{T}(w)\in \arg\min\{\|v\|:v\in\\ &\arg\min_{v'\in V}\{\|T(v')-w\|\}\,\}\\
  \textup{MP2: } & T^\dagger T T^\dagger = T^\dagger
\end{align*}
\end{definition} 
The calculation of $\finv{T}(w)$ for a given $w\in W$ can be translated into two consecutive minimization problems: first, find $m_w = \min_{v\in V}\{\|T(v)-w\|\}$, and then minimize $\|v\|$ for $v\in V$ s.t. $\|T(v)-w\|=m_w$. Note that the definition implicitly requires that the minima exist. Any solutions are then also required to satisfy MP2.

The BAS property is modeled after the best approximate solution property of the MP inverse. It replaces MP3 and MP4, so the definition no longer relies on the adjoint operation. In addition, BAS directly implies MP1, making it redundant. This means that a pseudo-inverse according to Definition~\ref{def:generalized pseudo-inv} is in particular a $\{1,2\}$-inverse. Property MP2 is not implied by BAS, and neither is the involution property.
\begin{claim}\label{cla:new def dependencies}
Let $V$ and $W$ be subsets of normed spaces, let $T:V\rightarrow W$, and let $S:W\rightarrow V$ satisfy BAS w.r.t. $T$.
\begin{enumerate}
    \item 
    It holds that $S\in T\{1\}$.
    \item
    For every $w\in T(V)$, $STS(w)=S(w)$, namely, MP2 is satisfied on $T(V)$.
    \item
    In general, $S$ does not necessarily satisfy MP2.
    \item
    It might be impossible to define involutive pseudo-inverses for all operators in $W^V$, even when all these operators have pseudo-inverses.
\end{enumerate}
\end{claim}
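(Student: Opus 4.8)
## Proof Proposal for Claim~\ref{cla:new def dependencies}

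The plan is to handle the four parts in order, since later parts build on earlier ones. For part~1, I would simply observe that the BAS property forces $S(w)$ to lie in $\arg\min_{v'\in V}\{\|T(v')-w\|\}$; applying this with $w = T(v)$ for arbitrary $v\in V$, the minimum of $\|T(v')-T(v)\|$ over $v'$ is zero (attained at $v'=v$), so $\|T(S(T(v)))-T(v)\| = 0$, hence $TST(v) = T(v)$. That is exactly MP1.

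For part~2, take $w\in T(V)$. By part~1 (really by the same argument), $TS(w) = w$ for every $w\in T(V)$: indeed $w = T(v)$ gives $TS(w)=TST(v)=T(v)=w$. Now apply BAS at the point $S(w)\in W$... but more directly, we want $STS(w) = S(w)$; since $TS(w)=w$, this reads $S(w) = S(w)$ once we substitute, so the statement is immediate. To make it rigorous: $STS(w) = S(TS(w)) = S(w)$ because $TS(w)=w$. So MP2 holds on $T(V)$, and the only possible failure of MP2 is at points $w\notin T(V)$.

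For part~3, I need a concrete counterexample where BAS holds but MP2 fails off the image. The natural source of freedom is precisely the behavior of $S$ on $W\setminus T(V)$: BAS constrains $S(w)$ there to be a minimal-norm best approximant, but if that best approximant is not unique in norm, or more simply if we engineer $W$ so that the set $W\setminus T(V)$ is small and $T$'s image is a proper subset, we can get $S(w)$ landing somewhere that is not fixed by $TS$ in the MP2 sense. Actually BAS pins down $S(w)$ uniquely by the minimal-norm condition, so the example must exploit a situation where the minimal-norm best approximant $v^\ast$ of some $w\notin T(V)$ satisfies $STS(w)\neq S(w)$, i.e. where $S(T(v^\ast))\neq v^\ast$ because $T(v^\ast)$ is itself a point whose minimal-norm best approximant differs from $v^\ast$. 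A clean way: let $V = W = \RR$ (or a subset), pick $T$ piecewise so that its image is, say, $[0,\infty)$, and arrange the preimages and norms so that the minimal-norm inverse of some negative $w$ is a point $v^\ast$ whose image $T(v^\ast)$ has a strictly smaller-norm alternative preimage. I would spell out such a small piecewise-linear $T$ explicitly. This is the step I expect to be the main obstacle --- not conceptually hard, but it requires carefully choosing domain/range subsets and checking that BAS really does force the offending value while MP2 genuinely fails.

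For part~4, the argument parallels the earlier discussion after Theorem~\ref{thm:properties of new inverse}: a necessary condition for all operators in $W^V$ to admit involutive inverses is $|W^V| = |V^W|$, since an involutive assignment $T\mapsto\finv{T}$ would have to be a bijection between $W^V$ and $V^W$ (if $\finv{T_1}=\finv{T_2}$ then applying the inverse again gives $T_1 = \finv{T_1}^{\dagger\dagger}\ldots$ wait --- more carefully, involution means the inverse map is its own inverse, hence injective, hence a bijection onto its image, and for \emph{all} operators to have involutive pseudo-inverses the image must be all of $V^W$). So I would exhibit finite $V,W$ with $|V|\neq|W|$, say $|V|=1$, $|W|=2$: then $|W^V| = 2$ while $|V^W| = 1$, so no bijection exists, yet one checks directly that pseudo-inverses do exist for both operators in $W^V$ (with $|V|=1$ the minimization problems are trivial). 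That settles the claim.
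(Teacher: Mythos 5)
Parts 1 and 2 match the paper's argument, and your part 4 is correct and in fact cleaner than the paper's: the paper needs $|V|=5$, $|W|=2$ together with a unit-circle construction to guarantee that every operator in $W^V$ (surjective or constant) has a pseudo-inverse, whereas your $|V|=1$, $|W|=2$ example makes existence trivial and the counting argument immediate.

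The genuine gap is in part 3, and it is exactly where you flagged trouble. The mechanism you propose cannot occur: if $v^\ast=S(w)$ is a minimal-norm element of $A_w=\arg\min_{v}\|T(v)-w\|$, then $T^{-1}(\{T(v^\ast)\})\subseteq A_w$ (every source of $T(v^\ast)$ attains the same optimal distance to $w$), so
\[
\|v^\ast\|=\min\{\|v\|:v\in A_w\}\leq\min\{\|v\|:v\in T^{-1}(\{T(v^\ast)\})\}\leq\|v^\ast\|,
\]
i.e.\ $v^\ast$ is already a minimal-norm source of $T(v^\ast)$. Hence there is never ``a strictly smaller-norm alternative preimage,'' and no piecewise-linear $T$ on $\RR$ can be arranged to produce one. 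Your parenthetical remark that ``BAS pins down $S(w)$ uniquely'' is also false and is precisely what blocks you from seeing the correct route: the only way MP2 can fail is through \emph{ties}, i.e.\ several minimal-norm elements of equal norm among which $S$ may choose inconsistently at different points of $W$. The paper's counterexample is exactly this: $V=\{-1,1\}$, $W=\{0,1\}$ as subsets of $\RR$, $T\equiv 1$, so both $-1$ and $1$ are admissible values for $S$ at every $w\in W$; choosing $S(0)=-1$ and $S(1)=1$ gives $STS(0)=S(1)=1\neq -1=S(0)$. To repair your proof, replace the strict-inequality construction with a tie-based one of this kind.
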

\begin{proof}
\begin{enumerate}
    \item 
    Let $v\in V$ and write $w = T(v)$. For $v'\in V$ to minimize $\|T(v')-w\|$, one must have $T(v')=w$. By the BAS property, $S(w)\in T^{-1}(\{w\})$, so $TS(w) = w$, and therefore, $TS T(v) = T(v)$, namely, MP1 is satisfied.
\item
Let $w=T(v)$ for some $v\in V$. Therefore, 
\begin{align}
STS(w) &= STST(v) = ST(v)\\
&=S(w)\;,    
\end{align} 
where the second equality is by part~1, so MP2 is satisfied on $T(V)$.
\item
Let $V=\{-1,1\}$ and $W = \{0,1\}$ as subsets of $\RR$, let $T(-1)=T(1)=1$, and consider $S(0)$. Since $T(V)=\{1\}$, any source of $1$ with minimal norm would satisfy BAS, namely, either $-1$ or $1$. The same is true for $S(1)$. Thus, BAS poses no restrictions on $S$. We can therefore satisfy BAS by choosing $S(0)=-1$ and $S(1)=1$, and then $STS(0)=1\neq -1  = S(0)$, contradicting MP2. 
\item
Let $V$ be a set of elements with equal norms, e.g., on the unit circle in $\RR^2$ with $L_2$.

Assume that $T$ is onto $W$. By parts 1 and 2, a pseudo-inverse of $T$ is in $T\{1,2\}$. BAS means that for every $w\in W$, a pseudo-inverse value at $w$ must be a source of $w$ with minimal norm, namely, any source of $w$. This requirement is fulfilled by any $\{1,2\}$-inverse of $T$ (Lemma~\ref{lem: equivalent meaning of MP1-2 different spaces}), so the pseudo-inverses of $T$ are exactly its $\{1,2\}$-inverses. 

Assume now that $T_1$ is a constant operator, $T_1(v)=w_0$ for every $v\in V$. BAS imposes no restrictions on the value of a pseudo-inverse of $T_1$ at any point, so again any $\{1,2\}$-inverse of $T_1$ from $W$ to $V$ would be a pseudo-inverse.

For every $T\in W^V$, it is always possible to pick a $\{1,2\}$-inverse by Lemma~\ref{lem: equivalent meaning of MP1-2 different spaces}. 

Let $|W|=2$ and $|V|=5$, so $|W|^{|V|}>|V|^{|W|}$, and every operator in $W^V$ is either surjective or constant. It is thus possible to define a pseudo-inverse for each, but there must be two distinct operators $T_1,T_2\in W^V$ with the same pseudo-inverse, so the involution property cannot possibly hold for both. 
\end{enumerate}
\end{proof}

The nonlinear pseudo-inverse has the desirable elementary property that for any bijection $T$, $T^{-1}$ is its unique valid pseudo-inverse (Claim~\ref{cla:full inv of a bijection} in the appendix). By itself, however, Definition~\ref{def:generalized pseudo-inv} implies neither existence nor uniqueness.  In what follows we will seek interesting scenarios where this is indeed the case. One such important scenario is the original case of matrices, discussed by Penrose. As shown by him \citep{penrose1956best}, for a linear operator $T$, the BAS property is uniquely satisfied by the MP inverse. Since the MP inverse also satisfies MP2, we can state the following.
\begin{claim}\label{cla:generalizing MP}
Let $V$ and $W$ be finite-dimensional inner-product spaces over $\CC$, and let $T:V\rightarrow W$ be a linear operator. Then Definition~\ref{def:generalized pseudo-inv} w.r.t. the induced norms is uniquely satisfied by the MP inverse.
\end{claim}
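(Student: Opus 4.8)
The plan is to split the statement into existence and uniqueness, and to observe that the BAS requirement alone already pins the inverse down, so that MP2 is automatically satisfied and no second operator can qualify.

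For existence I would verify directly that the MP inverse $T^\dagger$ meets Definition~\ref{def:generalized pseudo-inv} with respect to the induced norms. Fixing orthonormal bases of $V$ and $W$ turns $T$ into a matrix, the inner products into the standard ones, the induced norms into Euclidean norms, and $T^*$ into the conjugate transpose, so $T^\dagger$ becomes the matrix MP inverse; hence it suffices to invoke Penrose's results in this coordinatized form. The BAS property is exactly Penrose's best‑approximate‑solution theorem \citep{penrose1956best}, as recalled after Definition~\ref{def:pseudo-inv}: for every $w\in W$, $v^\ast = T^\dagger(w)$ minimizes $\|T(v)-w\|$ over $v\in V$, and among all such minimizers it is the unique one of smallest norm, i.e. $T^\dagger(w)\in\arg\min\{\|v\|:v\in\arg\min_{v'\in V}\|T(v')-w\|\}$. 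Property MP2 holds because $T^\dagger$ satisfies all of MP1--MP4 \citep{penrose_1955}. Thus $T^\dagger$ is a pseudo-inverse in the sense of Definition~\ref{def:generalized pseudo-inv}.

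For uniqueness I would show that for each fixed $w\in W$ the two nested minimizations in the definition admit a single solution, which therefore must coincide with $T^\dagger(w)$ by the previous paragraph. Since $V$ is finite-dimensional, $T(V)$ is a closed subspace of $W$, so $\inf_{v\in V}\|T(v)-w\|$ is attained and equals $\|P_{T(V)}(w)-w\|$; consequently the first‑stage minimizer set is $M_w = T^{-1}(\{P_{T(V)}(w)\})$, a nonempty coset of $\ker T$, hence a closed affine (in particular convex) subset of $V$. By the Hilbert projection theorem (\citep{rudin87book}, Theorem~4.10), $M_w$ is a Chebyshev set, so it contains exactly one element of minimal norm. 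Therefore any operator $S:W\to V$ satisfying BAS with respect to $T$ is forced to have $S(w)$ equal to that unique element for every $w$, so $S=T^\dagger$. (As a byproduct, such an $S$ automatically satisfies MP2, matching Claim~\ref{cla:new def dependencies}, parts 1--2.)

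The only non-routine ingredient is the existence and uniqueness of these two minima: this is precisely where finite-dimensionality enters — through closedness of $T(V)$ to guarantee the first minimum, and through the Hilbert projection theorem applied to the coset $M_w$ to guarantee that the second minimum is a single point. Everything else is a direct citation of Penrose's theorems, so I expect the full proof to be short.
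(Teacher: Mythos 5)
Your proposal is correct and matches the paper's argument, which simply observes (citing Penrose) that BAS is uniquely satisfied by the MP inverse and that the MP inverse also satisfies MP2. The only difference is that you re-derive the uniqueness of the minimal-norm least-squares solution from scratch (closedness of $T(V)$ plus the Chebyshev property of the coset $M_w$) where the paper just cites Penrose's best-approximate-solution theorem; this is a valid, self-contained filling-in of the same route rather than a different one.
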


Focusing on the image of the operator (equivalently, if the operator is onto) allows for easier characterization of existence and uniqueness.

\begin{lemma}[Inverse restricted to the image]\label{lem:inverse on the image}
Let $V$ and $W$ be subsets of normed spaces, and let $T:V\rightarrow W$ and $E\subseteq T(V)$.
A pseudo-inverse $\finv{T}:E\rightarrow V$ exists iff $A_w = \arg\min\{\|v\|: v\in T^{-1}(\{w\})\}$ is well-defined (minimum exists) for every $w\in E$. A pseudo-inverse $\finv{T}:E\rightarrow V$ exists uniquely iff $A_w$ is a singleton for every $w\in E$.
\end{lemma}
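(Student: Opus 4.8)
The plan is to reduce both assertions to an elementary pointwise‑selection problem, exploiting that $E$ lies inside the image of $T$. First I would observe that for $w\in E\subseteq T(V)$ the preimage $T^{-1}(\{w\})$ is nonempty, so $\min_{v'\in V}\|T(v')-w\|=0$ and this minimum is attained precisely on $T^{-1}(\{w\})$. Consequently the inner $\arg\min$ in the BAS requirement of Definition~\ref{def:generalized pseudo-inv} collapses to $T^{-1}(\{w\})$, and BAS for a candidate map $\finv{T}:E\to V$ is equivalent to the single condition $\finv{T}(w)\in A_w$ for every $w\in E$.

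The second step is to dispose of MP2. If $\finv{T}$ satisfies the reduced BAS condition, then $\finv{T}(w)\in A_w\subseteq T^{-1}(\{w\})$, hence $T\finv{T}(w)=w\in E$ for each $w\in E$; thus $T\finv{T}$ sends $E$ into $E$, the composition $\finv{T}T\finv{T}$ is well‑defined on $E$, and $\finv{T}T\finv{T}(w)=\finv{T}(w)$ automatically. This is precisely the phenomenon of part~2 of Claim~\ref{cla:new def dependencies}, which here becomes a full equality because the domain has been restricted to the image. So being a pseudo‑inverse $\finv{T}:E\to V$ of $T$ is exactly the same as being a selection $w\mapsto\finv{T}(w)\in A_w$.

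From this characterization both equivalences are immediate. For existence: such a selection can be formed (invoking the axiom of choice if $E$ is infinite) if and only if $A_w\neq\emptyset$ for every $w\in E$, i.e., the minimum defining $A_w$ exists for every $w$. For uniqueness: if every $A_w$ is a singleton the selection is forced, so the pseudo‑inverse is unique; conversely, if all $A_w$ are nonempty but some $A_{w_0}$ contains two distinct elements $v_1\neq v_2$, one builds two distinct pseudo‑inverses by choosing any valid values on $E\setminus\{w_0\}$ and setting the value at $w_0$ to $v_1$ in one and to $v_2$ in the other. Hence a pseudo‑inverse exists uniquely iff $A_w$ is a singleton for every $w\in E$ (the case of an empty $A_w$ being already excluded by the existence part).

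I do not expect a genuine obstacle; the only step needing care is the verification that MP2 is automatically satisfied on a subset of the image, so that the clean ``$A_w$ nonempty / $A_w$ singleton'' dichotomy is not spoiled by a hidden second constraint.
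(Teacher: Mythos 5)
Your proposal is correct and follows essentially the same route as the paper: reduce BAS on $E\subseteq T(V)$ to the pointwise selection $\finv{T}(w)\in A_w$, note that MP2 is then automatic on the image (the paper cites part~2 of Claim~\ref{cla:new def dependencies} for this; you re-derive it directly), and read off existence and uniqueness from whether each $A_w$ is nonempty, respectively a singleton. No gaps.
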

\begin{proof}
If $A_w$ is well-defined for every $w\in E$, then BAS can be satisfied. By part~2 of Claim~\ref{cla:new def dependencies}, BAS implies MP2, and thus a pseudo-inverse exists. Moreover, if $A_w$ is a singleton for every $w$, then there are no degrees of freedom in choosing a pseudo-inverse, so it is unique.

If a pseudo-inverse exists, then the BAS property implies that for every $w\in E$, $A_w$ is well-defined. Suppose that for some $w$, $A_w$ is not a singleton. BAS then allows a degree of freedom for the value of a pseudo-inverse on $w$. Since BAS implies MP2 in the current scenario, any choice would yield a valid pseudo-inverse, implying non-uniqueness.
\end{proof}

With the following restriction, the pseudo-inverse may be uniquely defined beyond the image.
\begin{claim}[Sufficient condition for a unique pseudo-inverse]\label{cla:exist and unique}
Assume that for every $w\in T(V)$, $\arg\min\{\|v\|:T(v)=w\}$ is a singleton, and let $W' = \{w\in W: \arg\min\{\|w_0-w\|:w_0\in T(V)\}\,\textup{is a singleton}\}$. Then a pseudo-inverse $\finv{T}:W'\rightarrow V$ exists and it is unique.
\end{claim}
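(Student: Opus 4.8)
The plan is to unwind the two nested minimizations in the BAS property and show that, for each $w\in W'$, each of them collapses onto one of the two singleton hypotheses, so that BAS already pins down a unique candidate map $\finv{T}:W'\to V$; then to verify separately that this map satisfies MP2, which is what promotes it from a mere BAS map to a genuine pseudo-inverse. In spirit this is the argument of Lemma~\ref{lem:inverse on the image}, pushed out from the image $T(V)$ to the larger set $W'$.

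First I would fix $w\in W'$ and examine the outer problem $\arg\min_{v'\in V}\{\|T(v')-w\|\}$. Since $\inf_{v'\in V}\|T(v')-w\|=\inf_{w_0\in T(V)}\|w_0-w\|$, and the latter is attained at a single point $w_0^\star=w_0^\star(w)\in T(V)$ by the definition of $W'$, the outer $\arg\min$ equals exactly the nonempty preimage $T^{-1}(\{w_0^\star\})$. The inner problem $\arg\min\{\|v\|:v\in T^{-1}(\{w_0^\star\})\}$ is then a singleton by the first hypothesis applied to the point $w_0^\star\in T(V)$. Hence BAS leaves no freedom: it determines a unique value $\finv{T}(w)$ for every $w\in W'$. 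This already yields uniqueness (any pseudo-inverse on $W'$ satisfies BAS), once we confirm that the BAS-determined map is in fact a pseudo-inverse.

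It remains to check MP2, i.e.\ $\finv{T} T\finv{T}=\finv{T}$ on $W'$. The observation that makes the composition meaningful is that $T(V)\subseteq W'$: a point of $T(V)$ is its own unique nearest point in $T(V)$. Now fix $w\in W'$ and set $v^\star=\finv{T}(w)$; by the previous step $v^\star\in T^{-1}(\{w_0^\star\})$, so $T\finv{T}(w)=T(v^\star)=w_0^\star\in T(V)\subseteq W'$ and $\finv{T}\bigl(T\finv{T}(w)\bigr)=\finv{T}(w_0^\star)$. But $w_0^\star\in T(V)$ is its own nearest point in $T(V)$, so rerunning the BAS analysis at $w_0^\star$ gives that $\finv{T}(w_0^\star)$ is the unique minimal-norm element of $T^{-1}(\{w_0^\star\})$, which is precisely $v^\star=\finv{T}(w)$. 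Thus MP2 holds, existence is established, and together with the no-freedom conclusion so is uniqueness. One may also invoke part~2 of Claim~\ref{cla:new def dependencies} for MP2 on $T(V)$ and read the display above as merely propagating that identity to all of $W'$.

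The only real obstacle, modest as it is, is MP2: part~3 of Claim~\ref{cla:new def dependencies} warns that BAS does not imply MP2 in general, so one must actively exploit the structure here — that $T\finv{T}$ lands in $T(V)$, that $T(V)\subseteq W'$, and that the BAS value is unchanged along that projection — rather than treating MP2 as automatic. Everything else is the routine reduction of the nested $\arg\min$s to the two hypothesized singletons.
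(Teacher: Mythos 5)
Your proposal is correct and follows essentially the same route as the paper: define $\finv{T}$ on $W'$ by composing the unique nearest-point map onto $T(V)$ with the unique minimal-norm source, observe that BAS forces exactly this choice (giving uniqueness), and check MP2 using the facts that $T(V)\subseteq W'$ and that $T\finv{T}$ lands in $T(V)$. The only cosmetic difference is that the paper obtains MP1--2 by invoking Lemma~\ref{lem: equivalent meaning of MP1-2 different spaces}, whereas you verify MP2 directly; the underlying observation is identical.
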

\begin{proof}
Note that $W'\supseteq T(V)$, and define $\finv{T}(w)$ as follows. If $w\in T(V)$, then $\finv{T}(w)$ is the single source of $w$ with minimal norm. If $w\in W'\setminus T(V)$, then define $\finv{T}(w)=\finv{T}(w_0)$, where $w_0$ is the single closest element to $w$ in $T(V)$. By Lemma \ref{lem: equivalent meaning of MP1-2 different spaces}, this is a $\{1,2\}$-inverse, and in addition, it clearly satisfies BAS, so it is a pseudo-inverse. The BAS property uniquely determines the values of a pseudo-inverse on $W'$, so we are done.
\end{proof}

We now consider the important case of continuous operators. The next theorem shows that a pseudo-inverse always exists over the whole range if the domain of the operator is compact. 
Conditions for uniqueness and continuity of the inverse are also given.

\begin{theorem}[Continuous operators over a compact set]\label{thm:inverse for continuous functions}
Let $V$ and $W$ be subsets of normed spaces where $V$ is compact, and let $T:V\rightarrow W$ be continuous. 
\begin{enumerate}
\item
A pseudo-inverse $\finv{T}:W\rightarrow V$ exists.
\item
Assume further that $W$ is a Hilbert space, $T$ is injective, and $T(V)$ is convex. Define $\finv{T}:W\rightarrow V$, as $\finv{T} = T^{-1}\circ P_{T(V)}$, where $P_{T(V)}$ is the projection onto $T(V)$ and $T^{-1}:T(V)\rightarrow V$ is the real inverse of $T$. Then $\finv{T}$ is continuous and is the unique pseudo-inverse of $T$.
\end{enumerate}
\end{theorem}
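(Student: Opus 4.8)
The plan is to treat the two parts separately, using the compactness of $V$ to produce the minimizers demanded by the BAS property and then assembling $\finv{T}$ carefully so that MP2 --- which by Claim~\ref{cla:new def dependencies} does \emph{not} follow from BAS --- also holds.

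For part~1, fix $w\in W$. Since $V$ is compact and $v\mapsto\|T(v)-w\|$ is continuous, the minimum $m_w:=\min_{v\in V}\|T(v)-w\|$ is attained, so $M_w=\arg\min_{v\in V}\{\|T(v)-w\|\}$ is nonempty; moreover $M_w$ is the preimage of $\{m_w\}$ under a continuous function on a compact set, hence compact, so $\|\cdot\|$ attains its minimum on $M_w$ and $A_w=\arg\min\{\|v\|:v\in M_w\}$ is nonempty --- this is precisely the set of values allowed by BAS at $w$. I would then define $\finv{T}$ in two stages. On $T(V)$, where $m_w=0$ and hence $M_w=T^{-1}(\{w\})$, Lemma~\ref{lem:inverse on the image} (with $E=T(V)$) shows that any selection $\finv{T}(w)\in A_w$ gives a pseudo-inverse of $T$ restricted to its image, and MP2 holds there trivially since then $T\finv{T}(w)=w$. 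On $W\setminus T(V)$, I pick any $v^{*}\in A_w$, set $u^{*}=T(v^{*})\in T(V)$, and \emph{define} $\finv{T}(w):=\finv{T}(u^{*})$, recycling the value already chosen on the image.

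The crux of part~1 is to verify that this off-image definition is legitimate, i.e.\ that $\finv{T}(u^{*})\in A_w$ (so BAS holds at $w$) and that MP2 holds. The key inclusion is $T^{-1}(\{u^{*}\})\subseteq M_w$: if $T(v'')=u^{*}$ then $\|T(v'')-w\|=\|u^{*}-w\|=\|T(v^{*})-w\|=m_w$. Hence $\finv{T}(u^{*})\in T^{-1}(\{u^{*}\})\subseteq M_w$, and since $\finv{T}(u^{*})$ has minimal norm in $T^{-1}(\{u^{*}\})$ while $v^{*}\in T^{-1}(\{u^{*}\})$ has minimal norm in the larger set $M_w$, the two norms agree, giving $\finv{T}(u^{*})\in A_w$, i.e.\ BAS at $w$. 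For MP2, $\finv{T}(u^{*})\in T^{-1}(\{u^{*}\})$ gives $T\finv{T}(w)=u^{*}$, so $\finv{T}T\finv{T}(w)=\finv{T}(u^{*})=\finv{T}(w)$; together with MP2 on $T(V)$ this yields MP2 on all of $W$. I expect this bookkeeping --- routing each off-image value through an admissible on-image value --- to be the main obstacle, the essential new fact being the inclusion above; Claim~\ref{cla:new def dependencies}(3) shows it cannot be skipped.

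For part~2, I would first record the topological facts: $T(V)$ is compact (continuous image of a compact set) and convex, hence closed and convex, hence a Chebyshev set of the Hilbert space $W$ by the Hilbert projection theorem, so $P_{T(V)}$ is well-defined, single-valued, and takes values in $T(V)$; and $T:V\to T(V)$ is a continuous bijection from a compact space onto a metric space, hence a homeomorphism, so $T^{-1}:T(V)\to V$ is continuous. Thus $\finv{T}=T^{-1}\circ P_{T(V)}$ is well-defined and continuous, the metric projection onto a closed convex subset of a Hilbert space being nonexpansive (a one-line variational-inequality argument) and hence continuous. To check Definition~\ref{def:generalized pseudo-inv}: for $w\in W$, $\min_{v'\in V}\|T(v')-w\|=\min_{u\in T(V)}\|u-w\|=\|P_{T(V)}(w)-w\|$, and since $P_{T(V)}(w)$ is the \emph{unique} closest point of $T(V)$, equality forces $T(v')=P_{T(V)}(w)$; by injectivity of $T$ the inner $\arg\min$ in BAS is the singleton $\{T^{-1}(P_{T(V)}(w))\}=\{\finv{T}(w)\}$, so BAS holds, and since this inner set is a singleton for every $w$, BAS determines \emph{every} pseudo-inverse at every point, which yields uniqueness. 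Finally, with $v=\finv{T}(w)$ we have $T(v)=P_{T(V)}(w)\in T(V)$, and projecting a point of $T(V)$ onto $T(V)$ returns it unchanged, so $\finv{T}(T(v))=T^{-1}(P_{T(V)}(T(v)))=T^{-1}(T(v))=v$, i.e.\ $\finv{T}T\finv{T}(w)=\finv{T}(w)$, which is MP2. Part~2 poses no real obstacle beyond these standard facts.
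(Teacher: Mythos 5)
Your proposal is correct and follows essentially the same route as the paper: in part~1 both arguments route each off-image value through an on-image value $u^{*}=T(v^{*})$ with $v^{*}$ a minimal-norm minimizer, and both hinge on the same key fact that the recycled value $\finv{T}(u^{*})$ still lies in the BAS-admissible set at $w$ (your inclusion $T^{-1}(\{u^{*}\})\subseteq M_w$ plus the norm comparison is exactly the paper's $\finv{T}T(v)\in H_w$ step). In part~2 the only cosmetic difference is that you verify BAS, uniqueness, and MP2 directly where the paper delegates to Claim~\ref{cla:exist and unique}; the underlying argument is identical.
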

\begin{proof}

\begin{enumerate}
    \item 
    For every $w\in W$, the function $g_w:V\rightarrow \RR$ defined as $g_w(v) = \|T(v)-w\|$ is continuous. By the extreme value theorem, it attains a global minimum $m_w$ on $V$. The set $G_w = \{v\in V:\|T(v)-w\|=m_w\}=g_w^{-1}(\{m_w\})$ is closed, as the preimage of a closed set by a continuous function. As a subset of $V$, $G_w$ is also compact, and the function $h:G_w\rightarrow \RR$, defined as $h(v)=\|v\|$ is continuous and attains a minimum $\mu_w$ on $G_w$, again by the extreme value theorem. The set of elements in $G_w$ with minimal norm will be denoted by $H_w$. By definition, the BAS property can be satisfied by defining $\finv{T}(w)\in H_w$. However, this must be done while also satisfying MP2. By Lemma~\ref{lem: equivalent meaning of MP1-2 different spaces}, MP1--2 are satisfied if we choose $\finv{T}(w)\in T^{-1}(\{w\})$ for $w\in T(V)$, and $\finv{T}(w)=\finv{T}(w')$ for some $w'\in T(V)$ for $w\notin T(V)$. For $w\in T(V)$ we pick $\finv{T}(w)$ arbitrarily in $H_w$, and indeed $\finv{T}(w)\in H_w\subseteq G_w = T^{-1}(\{w\})$. For $w\notin T(V)$, picking $\finv{T}(w) = \finv{T} T(v)$ for some arbitrary $v\in H_w$ would thus guarantee that we satisfy MP1--2, and it remains only to show that $\finv{T} T(v)\in H_w$ if $v\in H_w$. Now, it holds that $H_w = \{u\in V:\|T(u)-w\|=m_w\;\textup{and}\;\|u\|=\mu_w\}$. By MP1 we have that $\|T(\finv{T} T(v))-w\|=\|T(v)-w\|=m_w$, so $\finv{T} T(v)\in G_w$ and it is left to show that $\|\finv{T} T(v)\|=\mu_w$. It holds that $\finv{T} T(v) $ is a source of $T(v)$ under $T$ with minimal norm. Since $v$ is also a source of $T(v)$, $\|\finv{T} T(v)\|\leq \|v\|=\mu_w$. However, the minimal norm of elements in $G_w$ is $\mu_w$, so $\|\finv{T} T(v)\|=\mu_w$, and we are done.
    \item
    Since $T$ is continuous and $V$ is compact, then $T(V)$ is compact, and therefore, closed. The projection $P_{T(V)}$ is thus well defined, by the Hilbert projection theorem, and it is continuous, by a theorem due to Cheney and Goldstein (Theorem~\ref{thm:projection is continuous} in the appendix). 

The operator $T$ is a bijection from $V$ to $T(V)$, so the real inverse $T^{-1}:T(V)\rightarrow V$ is well defined. Since $V$ is compact and $T$ is continuous, it is well known that $T^{-1}$ is also continuous. This holds since the preimage of any closed (and hence compact) set in $V$ by $T^{-1}$ is its image by $T$; this image is compact, since $T$ is continuous, and hence closed. 

Define $\finv{T}:W\rightarrow V$ as $\finv{T} = T^{-1}\circ P_{T(V)}$. This operator is continuous as the composition of two continuous operators. By Claim~\ref{cla:exist and unique} there is a unique pseudo-inverse of $T$ defined on $W$. To satisfy the BAS property, this pseudo-inverse must coincide with $\finv{T}$, and thus this unique pseudo-inverse must be $\finv{T}$.
\end{enumerate}

\end{proof}

One may show examples of continuous operators with a unique pseudo-inverse that nevertheless have a discontinuity even on the image. For $V=[-r,r]$ (with, say, $r\geq 4$) and $W=\RR$, the pseudo-inverse of $T(v)=(v-2)^3-(v-2)$ has a discontinuity since some points in the image have more than one source (Figure \ref{fig:example discontinuity}). The existence and uniqueness of a pseudo-inverse of $T$ on the image is guaranteed by Claim~\ref{cla:exist and unique}. The only violation of the assumptions of part~2 of Theorem~\ref{thm:inverse for continuous functions} is that $T$ is not injective.
\begin{figure*}[t]
\centering
\includegraphics[scale=0.8,trim={0cm 1.65cm, 0cm, 2.3cm},clip]{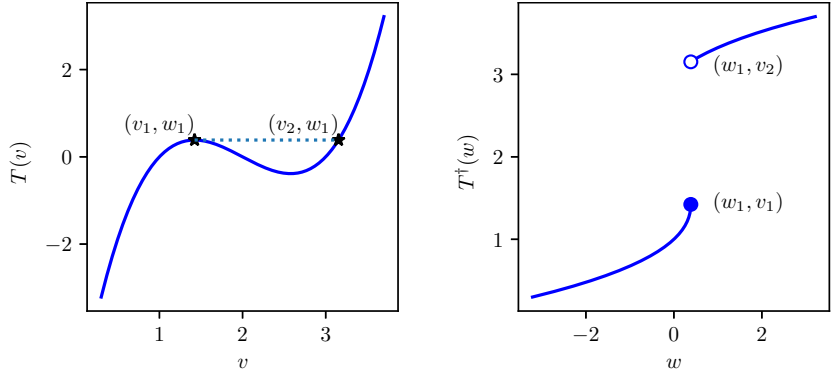} 
\caption{An example of a discontinuity in a unique pseudo-inverse due to local extrema, with $T(v)=(v-2)^3-(v-2)$. The pseudo-inverse value at $w_1=T(v_1)$ is $v_1$, which of the two sources of $w_1$ has the smaller norm. For any $\epsilon>0$, $T(v_1)+\epsilon$ has a single source that is greater than $v_2$.}
\label{fig:example discontinuity}
\end{figure*}

Another example of a discontinuity, this time also with $w\notin T(V)$, may be shown with $V=[0,2\pi]$, $W=\CC$, and $T(v)=e^{i v}$. A pseudo-inverse $\finv{T}:\CC\rightarrow [0,2\pi]$ exists by part~1 of Theorem~\ref{thm:inverse for continuous functions}. The image is $T(V)=\{z\in \CC: |z|=1\}$. For every $w\neq 0$, the nearest element in the image is $w/|w|$ and its source with minimal norm is its angle in $[0,2\pi)$. For $w=0$ all elements of the image are equidistant, and $v=0$ is the source with minimal norm. Thus 
 \begin{equation}
 \finv{T}(w) = 
 \begin{cases}
 \textup{angle of} \,w\, \textup{in} \,[0,2\pi) & w\neq 0\\
 0 & w=0
\end{cases}
\end{equation}
is the unique pseudo-inverse of $T$, with a discontinuity at any $w\in \RR^+$.

The impact of Theorem~\ref{thm:inverse for continuous functions} may be extended to non-compact domains $V$ that are a countable union of compact sets $V_1\subseteq V_2\subseteq\ldots$, for example, $\RR^n = \cup_{m=1}^\infty \bar{B}(0,m)$. For every point $w\in W$ one may relate the sequence $\finv{(T|_{V_n})}(w)$ to $\finv{T}(w)$, as follows.

\begin{theorem}\label{thm: limit of compact inverses}
Let $T:V\rightarrow W$, where $V=\cup_{n=1}^\infty V_n$ and $V_1\subseteq V_2 \subseteq \ldots$, denote $T_n = T|_{V_n}$, and assume a pseudo-inverse $\finv{T}_n:W\rightarrow V_n$ exists for every $n$. If for every $w\in W$ there is $N=N(w)\in \NN$ s.t. $n\geq N$ implies $\finv{T}_n(w) = \finv{T}_N(w)$, then $\finv{T}:W\rightarrow V$, defined as $\finv{T}(w) =  \lim_{n\rightarrow\infty} \finv{T}_n(w)$, is a pseudo-inverse for $T$. Furthermore, if the pseudo-inverse for each $T_n$ is unique, then $\lim_{n\rightarrow\infty} \finv{T}_n$ is the unique pseudo-inverse for $T$.
\end{theorem}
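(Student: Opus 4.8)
The plan is to first show that the pointwise limit is a well-defined operator satisfying the two axioms BAS and MP2 of Definition~\ref{def:generalized pseudo-inv}, and then to strengthen this to uniqueness when each $\finv{T}_n$ is unique. I would open with one preliminary observation: the hypothesis makes $(\finv{T}_n(w))_n$ eventually constant, so $\finv{T}(w)=\finv{T}_{N(w)}(w)$ is well defined and lies in $V_{N(w)}\subseteq V$; and, writing $m_w^{(n)}=\min_{v\in V_n}\|T(v)-w\|$, the BAS property of $\finv{T}_n$ gives $m_w^{(n)}=\|T(\finv{T}_{N(w)}(w))-w\|$ for every $n\ge N(w)$. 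Since $V=\bigcup_{n\ge N(w)}V_n$, it follows that $v^\ast:=\finv{T}(w)$ globally minimizes $v\mapsto\|T(v)-w\|$ over $V$, and that $m_w:=\inf_{v\in V}\|T(v)-w\|=\|T(v^\ast)-w\|$ is attained.

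Granting this, BAS for $\finv{T}$ reduces to the norm-minimality of $v^\ast$ among those minimizers: if some $u\in V_k$ had $\|T(u)-w\|=m_w$ and $\|u\|<\|v^\ast\|$, then for $n\ge\max(k,N(w))$ the element $u$ would be a minimizer of $\|T_n(\cdot)-w\|$ over $V_n$ of norm below $\|\finv{T}_n(w)\|=\|v^\ast\|$, contradicting BAS for $\finv{T}_n$. So $\finv{T}$ satisfies BAS, hence also MP1 by Claim~\ref{cla:new def dependencies}. For MP2, fix $w$, set $w'=T(v^\ast)$ with $v^\ast=\finv{T}(w)$, and pick $n\ge\max(N(w),N(w'))$: then $\finv{T}_n(w)=v^\ast$, $v^\ast\in V_{N(w)}\subseteq V_n$ so $T_n(v^\ast)=T(v^\ast)=w'$, and $\finv{T}_n(w')=\finv{T}(w')$. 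Applying MP2 for $\finv{T}_n$ at $w$ gives $\finv{T}_n(w')=v^\ast$, so $\finv{T}(T(\finv{T}(w)))=\finv{T}(w')=v^\ast=\finv{T}(w)$. Hence $\finv{T}$ is a pseudo-inverse of $T$; when each $\finv{T}_n$ is unique the sequence is unambiguous, so $\lim_n\finv{T}_n=\finv{T}$ is in particular one.

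For uniqueness I would show that $H_w:=\arg\min\{\|v\|:v\in\arg\min_{v'\in V}\|T(v')-w\|\}$ is a singleton for each $w$; since any pseudo-inverse $S$ of $T$ has $S(w)\in H_w$ by BAS and we have shown $\finv{T}(w)\in H_w$, this forces $S=\finv{T}$. If $H_w$ had two distinct points $u_1\ne u_2$, then for $n\ge N(w)$ large enough that $u_1,u_2\in V_n$, the residual stabilization shows $m_w^{(n)}=m_w$ and that the minimal norm among minimizers of $\|T_n(\cdot)-w\|$ over $V_n$ equals $\|u_1\|=\|u_2\|$, so both $u_1$ and $u_2$ are admissible BAS values for $T_n$ at $w$. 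One then manufactures two distinct pseudo-inverses of $T_n$ by altering $\finv{T}_n$ at the single argument $w$ (and, when MP2 requires it, rerouting that value through the corresponding point of $T_n(V_n)$, which is legitimate by Lemma~\ref{lem: equivalent meaning of MP1-2 different spaces} because two minimal-norm sources of the same image point are interchangeable as values), contradicting uniqueness of $\finv{T}_n$. I expect this last construction to be the main obstacle: one must check that the modified map remains in $T_n\{1,2\}$ and still satisfies BAS at every point of $W$, not just at $w$ and on $T_n(V_n)$ where Claim~\ref{cla:new def dependencies} supplies MP2 for free, and this needs a short case split on whether or not $T(u_1)=T(u_2)$.
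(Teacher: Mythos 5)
Your first half is complete and is essentially the paper's argument: the stabilization hypothesis makes $\finv{T}(w)=\finv{T}_{N(w)}(w)$ a well-defined global residual-minimizer of minimal norm (any better competitor lies in some $V_n$ with $n\geq\max\{k,N(w)\}$ and contradicts BAS for $\finv{T}_n$), and MP2 follows by invoking MP2 of $\finv{T}_n$ at an index $n\geq\max\{N(w),N(T(\finv{T}(w)))\}$; the paper does exactly this, merely in the other order.

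The gap is in the uniqueness half. Your reduction is legitimate: writing $H_{w}=\arg\min\{\|v\|:v\in\arg\min_{v'\in V}\|T(v')-w\|\}$, any pseudo-inverse of $T$ takes its value at $w$ in $H_w$, so it suffices that each $H_w$ be a singleton; this is essentially equivalent to the paper's formulation, since a second pseudo-inverse $\bar{T}$ differing from $\finv{T}$ at $w_0$ supplies a second element $\bar{T}(w_0)\in H_{w_0}$. But the decisive step --- that a second element $u_2\in H_{w_0}$ with $u_2\neq\finv{T}_n(w_0)$ really does produce a second pseudo-inverse of $T_n$ --- is exactly where the paper's proof does its heavy lifting, and you only announce it. The map you need is the paper's $\bar{T}_n$: assign the value $u_2$ at $w_0$ and at every $w$ with $T_n\finv{T}_n(w)=T_n(u_2)$, and keep $\finv{T}_n$ elsewhere. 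Verifying MP2 requires a three-way case analysis whose delicate branch is the ``otherwise'' case, where one must rule out $T_n\finv{T}_n(w)=w_0$; the paper does this using $T(\bar{v})=w_0$ whenever $w_0\in T(V)$, and in your setting the analogous fact is that $u_2\in H_{w_0}$ forces $T(u_2)=w_0$ once $w_0$ lies in the image. Verifying BAS at the rerouted points requires that $u_2$ have minimal norm among \emph{all} sources of $T(u_2)$ in $V_n$, which again follows from $u_2\in H_{w_0}$ because every source of $T(u_2)$ is a residual-minimizer for $w_0$ (the paper extracts the same inequality from MP1--2 of $\bar{T}$), and one must also check that the residual and minimal norm over $V_n$ agree with the global ones at $w_0$, which your part-one observations supply. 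So the plan is viable and runs parallel to the paper's construction with $\bar{v}$ replaced by $u_2$, but as written the core verification that the altered-and-rerouted map is a genuine pseudo-inverse of $T_n$ --- the bulk of the paper's proof --- is missing.
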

\begin{proof}
Let $w\in W$, and let $N_1=N(w)$, $N_2 = N(T(\finv{T}_{N_1}(w)))$, and $n\geq \max\{N_1,N_2\}$. Then
\begin{align}
    \finv{T}T\finv{T}(w) &=\finv{T}T\finv{T}_{N_1}(w) =\finv{T}_{N_2}T\finv{T}_{N_1}(w)\\ &=\finv{T}_{n}T\finv{T}_{n}(w)  
    =\finv{T}_{n}(w)
    =\finv{T}(w)\;,
\end{align}
so $\finv{T}$ satisfies MP2. We next show that $\finv{T}$ satisfies BAS. Assume by way of contradiction that $v_0 = \finv{T}_{N_1}(w)$ does not minimize $\|T(v)-w\|$ over $V$. Then there is some $v'\in V$ s.t. $\|T(v')-w\| < \|T(v_0)-w\|$. However, $v'\in V_n$ for some $n\geq N_1$, and $v_0=\finv{T}_n(w)$, and is therefore a minimizer of $\|T(v)-w\|$ over $V_n$, leading to a contradiction. Similarly, if there were a minimizer $v'$ of $\|T(v)-w\|$ over $V$ with a smaller norm than $v_0$, then $v'\in V_n$ for some $n\geq N_1$, and $v_0=\finv{T}_n(w)$ would again lead to a contradiction. Therefore, $v_0\in \arg\min\{\|v\|: v\in V\,\textup{minimizes}\, \|T(v)-w\|\}$, so $\finv{T}$ satisfies BAS, and is a valid pseudo-inverse for $T$.

For the uniqueness part, assume by way of contradiction that there is another pseudo-inverse $\bar{T}$ for $T$, which differs from $\finv{T} = \lim_{n\rightarrow\infty} \finv{T}_n$ on some $w_0$. Then there is some $n\geq N(w_0)$ s.t. both $\finv{T}(w_0)=\finv{T}_n(w_0)\in V_n$ and also $\bar{T}(w_0)\in V_n$. We will define a pseudo-inverse $\bar{T}_n\neq \finv{T}_n$ for $T_n$, yielding a contradiction to the uniqueness of $\finv{T}_n$. Denote $v=\finv{T}_n(w_0)$ and $\bar{v}=\bar{T}(w_0)$, and define  

\begin{equation}
\bar{T}_n(w) = \begin{cases}
      \bar{v}, & T_n\finv{T}_n(w) = T_n(\bar{v})\\
      &\quad\textup{or}\;w=w_0\\
      \finv{T}_n(w), & \textup{otherwise.}
    \end{cases}
    \end{equation}

\ignore{

}

First, $\bar{T}_n:W\rightarrow V_n$, and $\bar{T}_n\neq \finv{T}_n$ since they differ on $w_0$. We next show MP2 for $\bar{T}_n$. If $w=w_0$, then $\bar{T}_n T_n\bar{T}_n(w)=\bar{T}_n T_n(\bar{v})$. Now, $T_n\finv{T}_n(T_n(\bar{v}))=T_n(\bar{v})$, so $\bar{T}_n (T_n(\bar{v})) = \bar{v}$. Thus $\bar{T}_n T_n\bar{T}_n(w) = \bar{v} = \bar{T}_n(w)$, as required. If $T_n\finv{T}_n(w) = T_n(\bar{v})$, then $\bar{T}_n(w)=\bar{v}$ and $\bar{T}_n T_n\bar{T}_n(w)=\bar{T}_n T_n(\bar{v})=\bar{T}_n (T_n\finv{T}_n(w))$. By MP1, $T_n\finv{T}_n( T_n\finv{T}_n(w))=T_n\finv{T}_n(w) = T_n(\bar{v})$, so $\bar{T}_n (T_n\finv{T}_n(w))=\bar{v}$, and again MP2 is satisfied. Finally, assume $T_n\finv{T}_n(w) \neq T_n(\bar{v})$ and $w\neq w_0$, so $\bar{T}_n(w)= \finv{T}_n(w)$. Now, $\bar{T}_n T_n\bar{T}_n(w)=\bar{T}_n T_n \finv{T}_n(w)$, and $T_n \finv{T}_n(T_n \finv{T}_n(w)) = T_n \finv{T}_n(w)\neq T_n(\bar{v})$, so if we could show that $T_n \finv{T}_n(w)\neq w_0$, we would have that $\bar{T}_n T_n \finv{T}_n(w) = \finv{T}_n T_n \finv{T}_n(w) = \finv{T}_n(w)$, and thus MP2 for the final case. However, if $T_n \finv{T}_n(w)= w_0$, then $w_0\in T(V)$ and therefore $T(\bar{v})=w_0$, so $T_n(\bar{v})=T(\bar{v})=T_n \finv{T}_n(w)$, which contradicts the assumption for the current case.

The BAS property clearly holds for $\bar{T}_n$ with $w$ s.t. $\bar{T}_n(w) = \finv{T}_n(w)$, since $\finv{T}_n$ is a valid pseudo-inverse for $T_n$. For the remaining cases, we need to show that $\bar{v}\in \arg\min\{\|v\|: v\in V_n\,\textup{minimizes}\, \|T(v)-w\|\}$. If $w=w_0$, then $v,\bar{v}\in V_n$ and $v=\finv{T}_n(w_0)$ imply that $\|T_n(v)-w_0\|\leq \|T_n(\bar{v})-w_0\|$. On the other hand, $\bar{v}=\bar{T}(w_0)$ means that $\|T(\bar{v})-w_0\|\leq \|T(v)-w_0\|$, and thus $\|T(\bar{v})-w_0\| = \|T(v)-w_0\|$. Similarly, it now follows that $\|v\|=\|\bar{v}\|$, so $\bar{v}$ satisfies BAS as a choice for a pseudo-inverse of $T_n$ at $w_0$. If $T_n \finv{T}_n(w) = T_n(\bar{v})$, then $\|T_n \finv{T}_n(w) - w\|=\|T_n(\bar{v})-w\|$, and therefore $\bar{v}\in \arg\min_{u\in V_n}\{\|T_n(u)-w\|\}$. Since $\bar{T}(T(\bar{v})) = \bar{T}(T(\bar{T}(w_0))) = \bar{T}(w_0)= \bar{v}$, we have that
$\bar{T}(T_n \finv{T}_n(w)) = \bar{T}(T_n(\bar{v}))=\bar{v}$. This means that $\bar{v}$ has minimal norm among the sources of $T_n \finv{T}_n(w)$ under $T$, which include $\finv{T}_n(w)$, and thus $\|\bar{v}\|\leq\|\finv{T}_n(w)\|$. Therefore, $\bar{v}$ satisfies BAS as a pseudo-inverse for $T_n$ at $w$, and the proof is complete.
\end{proof}

\subsection{Projections Revisited}\label{subsec:proj revisited}
The $\{1,2\}$-inverse of projections has been considered in Subsection \ref{subsec:generalized projections}. We now continue the discussion in the current context of pseudo-inverses. The following result gives the pseudo-inverse of a projection for Hilbert spaces. This will be done as a special case of a more general scenario.
\begin{claim}[Pseudo-inverse of a projection cascade]\label{cla:projection cascade}
 Let $V$ be a subset of a Hilbert space, and let $1\leq n\in\NN$, $0\in C_n\subseteq C_{n-1} \dots \subseteq C_1 \subseteq V$, where $C_1,\ldots,C_n$ are closed and convex. Let $P_{C_i}:V\rightarrow V$ be the projection onto $C_i$ for $i=1,\ldots,n$, and define $P=P_{C_n}\circ\cdots\circ P_{C_1}$. Then $P_{C_n}$ is the unique pseudo-inverse of $P$.
\end{claim}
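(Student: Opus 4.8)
The plan is to verify directly that $P_{C_n}$ satisfies the two defining requirements of a pseudo-inverse (BAS and MP2 from Definition~\ref{def:generalized pseudo-inv}) with respect to $P$, and then to invoke Claim~\ref{cla:exist and unique} to upgrade this to uniqueness on all of $V$.

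Two structural facts will do most of the work. First, since $C_n\subseteq C_i$ for every $i$, each $P_{C_i}$ fixes every point of $C_n$; applying the $P_{C_i}$ in turn shows that $P$ restricts to the identity on $C_n$, and together with the obvious inclusion $P(V)\subseteq C_n$ this gives $P(V)=C_n$ and $P\circ P_{C_n}=P_{C_n}$. Second, each $P_{C_i}$ is norm non-increasing: writing $\pi=P_{C_i}(v)$ and using the variational inequality $\langle v-\pi,\,c-\pi\rangle\le 0$ for all $c\in C_i$ with the choice $c=0\in C_i$ gives $\|\pi\|^2\le\langle v,\pi\rangle\le\|v\|\,\|\pi\|$, hence $\|\pi\|\le\|v\|$; consequently $\|P(v)\|\le\|v\|$ for all $v\in V$. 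I would also extract the equality case from the same computation: $\|P_{C_i}(v)\|=\|v\|$ forces equality in Cauchy--Schwarz and then $v=P_{C_i}(v)\in C_i$ (the degenerate cases $v=0$ or $P_{C_i}(v)=0$ being immediate). Chaining this along the composition shows that $\|P(v)\|=\|v\|$ can hold only when $v\in C_n$, i.e.\ only when $P(v)=v$.

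Given these facts, verifying the axioms is short. For MP2, $P_{C_n}\circ P\circ P_{C_n}=P_{C_n}\circ P_{C_n}=P_{C_n}$, using $P\circ P_{C_n}=P_{C_n}$ and the idempotence of $P_{C_n}$. For BAS, fix $w\in V$; as $v'$ ranges over $V$, $P(v')$ ranges over $C_n$, and since $C_n$ is a Chebyshev set by the Hilbert projection theorem, the inner minimizer set $\arg\min_{v'\in V}\|P(v')-w\|$ is exactly the fibre $P^{-1}(\{P_{C_n}(w)\})$. Every element of that fibre has norm at least $\|P_{C_n}(w)\|$ by the norm non-increasing property, and $P_{C_n}(w)$ itself lies in the fibre because $P$ fixes $C_n$; hence $P_{C_n}(w)$ is a minimal-norm element of the fibre, which is precisely the BAS condition. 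Thus $P_{C_n}$ is a pseudo-inverse of $P$.

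Finally, for uniqueness I would apply Claim~\ref{cla:exist and unique} to $T=P$. Its hypothesis — that $\arg\min\{\|v\|:P(v)=w\}$ is a singleton for every $w\in P(V)=C_n$ — follows from the equality analysis above, since any such minimizer has norm $\|w\|=\|P(v)\|$, forcing $v\in C_n$ and hence $v=P(v)=w$; and since $P(V)=C_n$ is Chebyshev, the set $W'$ in that claim is all of $V$. Therefore $P$ has a unique pseudo-inverse on $V$, and as $P_{C_n}$ is one, it is the unique pseudo-inverse of $P$. The only real obstacle is the second structural fact — that projection onto a closed convex set containing the origin is norm non-increasing, with its sharp equality case — which is exactly where the hypothesis $0\in C_n$ enters; the rest is bookkeeping against Definition~\ref{def:generalized pseudo-inv} and Claim~\ref{cla:exist and unique}.
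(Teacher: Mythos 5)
Your proof is correct and follows essentially the same route as the paper's: MP2 via $P\circ P_{C_n}=P_{C_n}$, and BAS by identifying the minimizer set as the fibre $P^{-1}(\{P_{C_n}(w)\})$ and showing that projections onto convex sets containing $0$ are norm non-increasing via the Cheney--Goldstein variational inequality. The only cosmetic differences are that you extract strictness from the Cauchy--Schwarz equality case (where the paper uses a Pythagorean-type inequality and the first index at which $v$ leaves the nest) and that you route uniqueness through Claim~\ref{cla:exist and unique} rather than concluding it directly from the strict inequality.
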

\begin{proof}
It holds that $P_{C_n}PP_{C_n}=P_{C_n}$, so $P_{C_n}$ satisfies MP2. Note that $P(V)=P(C_n)=C_n$, and for any $w\in V$, since $P_{C_n}(w)$ is uniquely the nearest vector to $w$ in $C_n$ ,
\begin{align}
A &= \arg\min_{v\in V}\{\|P(v)-w\|\}\\
&= \{v\in V: P(v)=P_{C_n}(w)\}\;.    
\end{align}
For the BAS property to hold, it remains to show that for any $v\in V$ s.t. $P(v)=P_{C_n}(w)$, $\|v\|>\|P_{C_n}(w)\|$ unless $v=P_{C_n}(w)$. If
$v\in C_n$ then $v = P(v) = P_{C_n}(w)$ and the claim is trivial, so assume $v\notin C_n$. Let then $1 \leq i\leq n$ be the smallest index s.t. $v \notin C_i$. Denote $u_j = P_{C_j}\circ\ldots\circ P_{C_1}(v)$ for any $1\leq j\leq n$ and $u_0=v$. For any $1\leq j \leq n$,
\begin{align}
\|u_{j-1}\|^2 =& \| u_{j-1} - u_j\|^2+\|u_j\|^2
\\&+2\textup{Re}\langle u_{j-1} - u_j, u_j\rangle\;.
\end{align}
\ignore{

}
By Lemma~\ref{lem:cheny-goldstein lemma}, using the fact that $0\in C_j$, we have that $\textup{Re}\langle 0-u_j, u_j-u_{j-1} \rangle\geq 0$, so $\textup{Re}\langle u_{j-1}-u_j,u_j \rangle\geq 0$, and therefore 
\begin{equation}
\|u_{j-1}\|^2 \geq \| u_{j-1} - u_j\|^2+\|u_j\|^2 \;.
\end{equation}
It follows that $\|u_0\| \geq \ldots \geq \|u_n\|$, and since $u_{i-1} \neq u_i$, we also have $\|u_{i-1}\| > \|u_i\|$, so $\|v\|=\|u_0\| > \|u_n\|=\|P(v)\|=\|P_{C_n}(w)\|$, as required.
\end{proof}
It should be noted that a projection cascade, as defined in Claim~\ref{cla:projection cascade}, is a generalized projection, but not necessarily a metric projection (for example, in the case of a square contained in a disk in $\RR^2$ with $L_2$). Note also that the requirement $0\in C$ for a convex set $C$ is equivalent to the requirement $P_C(0)=0$. Thus, the relation between this subclass of projections to all projections resembles that of linear operators to affine operators. The pseudo-inverse of a single projection may now be characterized easily.

\begin{corollary}\label{cor:projection general inverse}
Let $V$ be a subset of a Hilbert space, let $C\subseteq V$ be a closed and convex set s.t. $0\in C$, and let $P_C:V\rightarrow V$ be the projection onto $C$. Then $P_C$ is its own unique pseudo-inverse.
\end{corollary}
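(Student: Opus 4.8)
The plan is to recognize that Corollary~\ref{cor:projection general inverse} is nothing but the special case $n=1$ of Claim~\ref{cla:projection cascade}, which has already been established. First I would instantiate Claim~\ref{cla:projection cascade} with $n=1$ and $C_1 = C$: its hypotheses, that $C_1,\ldots,C_n$ be closed and convex with $0\in C_n\subseteq\cdots\subseteq C_1\subseteq V$, collapse to exactly the hypotheses of the corollary, namely that $C$ is closed and convex and $0\in C\subseteq V$. The composed operator $P = P_{C_n}\circ\cdots\circ P_{C_1}$ then reduces to $P = P_{C_1} = P_C$, and the conclusion of Claim~\ref{cla:projection cascade} that $P_{C_n}$ is the unique pseudo-inverse of $P$ becomes the assertion that $P_C$ is the unique pseudo-inverse of $P_C$. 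This is precisely the claim of the corollary, so no further work is needed.

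If instead a self-contained argument is preferred, I would simply unwind the $n=1$ case of the proof of Claim~\ref{cla:projection cascade}. Since $P_C$ is idempotent (Claim~\ref{cla:inverse of projection}), $P_C P_C P_C = P_C$, so MP2 holds. For BAS, note $P_C(V) = C$, and for any $w\in V$ the point $P_C(w)$ is the unique nearest element of $C$ to $w$, whence $\arg\min_{v\in V}\{\|P_C(v)-w\|\} = \{v\in V : P_C(v) = P_C(w)\}$. It then remains to check that among these minimizers $P_C(w)$ uniquely has the smallest norm. For $v$ with $P_C(v) = P_C(w)$ and $v\neq P_C(w)$, applying Lemma~\ref{lem:cheny-goldstein lemma} with $0\in C$ gives $\mathrm{Re}\langle 0 - P_C(v),\, P_C(v) - v\rangle \geq 0$, i.e.\ $\mathrm{Re}\langle v - P_C(v),\, P_C(v)\rangle \geq 0$, so expanding $\|v\|^2 = \|v - P_C(v)\|^2 + \|P_C(v)\|^2 + 2\,\mathrm{Re}\langle v - P_C(v),\, P_C(v)\rangle$ yields $\|v\|^2 \geq \|v - P_C(v)\|^2 + \|P_C(v)\|^2 > \|P_C(v)\|^2$. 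Hence $P_C$ satisfies BAS and MP2, so it is a pseudo-inverse of itself; uniqueness follows since, as in Claim~\ref{cla:projection cascade}, the BAS property pins down the value at every $w\in V$.

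There is essentially no obstacle here: the corollary is a direct instantiation of an already proven claim, and even the alternative derivation requires only the two ingredients used for the cascade — idempotence of metric projections and the Cheney–Goldstein inequality — with the telescoping over $i=1,\ldots,n$ trivialized to a single step. The only point worth stating explicitly is that $0\in C$ is exactly what makes the Cheney–Goldstein estimate available, which is why this hypothesis (equivalently $P_C(0)=0$) cannot be dropped.
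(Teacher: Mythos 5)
Your proposal is correct and matches the paper exactly: the corollary is stated there as an immediate consequence of Claim~\ref{cla:projection cascade} specialized to $n=1$, $C_1=C$, which is precisely your first paragraph. Your self-contained unwinding (idempotence for MP2, the Cheney--Goldstein inequality with $0\in C$ for BAS and uniqueness) is also a faithful single-step version of the cascade proof.
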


In the above results and in what follows projections will mostly be dealt with in Hilbert spaces, where nonempty closed and convex sets are known to be Chebyshev sets. Since pseudo-inverses are defined in the broader context of normed spaces, we 
will briefly 
mention a stronger characterization and refer the reader to \citep[p.~436]{megginson1998introduction} for more details.
\begin{theorem}
A normed space is rotund and reflexive iff each of its nonempty
closed convex subsets is a Chebyshev set.
\end{theorem}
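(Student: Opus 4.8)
The plan is to prove the two implications separately, splitting the harder (reverse) direction into a rotundity part and a reflexivity part.

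For the forward implication, assume $X$ is rotund and reflexive, let $C\subseteq X$ be nonempty, closed and convex, fix $x\in X$, and write $d=\inf_{c\in C}\|x-c\|$. \emph{Existence} of a nearest point: for each $\varepsilon>0$ the set $K_\varepsilon=C\cap\bar{B}(x,d+\varepsilon)$ is nonempty, closed, bounded and convex, hence weakly compact (using reflexivity together with the fact that closed convex sets are weakly closed). The family $\{K_{1/n}\}_{n\in\N}$ is decreasing and thus has the finite intersection property, so $\bigcap_{n}K_{1/n}\neq\emptyset$; any point in this intersection lies in $C$ and within distance $d+\varepsilon$ of $x$ for every $\varepsilon$, hence realizes the distance $d$. \emph{Uniqueness}: if $c_1\neq c_2$ in $C$ both satisfy $\|x-c_i\|=d$, then $d>0$ (otherwise $x=c_1=c_2\in C$), convexity gives $\tfrac12(c_1+c_2)\in C$, and $d\le\|x-\tfrac12(c_1+c_2)\|\le\tfrac12(\|x-c_1\|+\|x-c_2\|)=d$; so the unit vectors $(x-c_1)/d$ and $(x-c_2)/d$ have an average of norm one, and rotundity forces them equal, i.e. $c_1=c_2$, a contradiction. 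Thus $C$ is Chebyshev.

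For the reverse implication, assume every nonempty closed convex subset of $X$ is Chebyshev. To obtain rotundity, suppose it fails: there are distinct $u,v$ with $\|u\|=\|v\|=\|\tfrac12(u+v)\|=1$. Writing $\tfrac12(u+v)$ as the average of $tu+(1-t)v$ and $(1-t)u+tv$, each of norm at most one, forces both to have norm exactly one, so the whole segment $[u,v]$ lies on the unit sphere. Then $C=[u,v]$ is nonempty, closed and convex, every point of $C$ lies at distance exactly $1$ from $0$, and $|C|>1$, so $0$ has no unique nearest point in $C$ — contradicting the Chebyshev hypothesis. Hence $X$ is rotund.

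It remains to derive reflexivity, which I expect to be the main obstacle. The natural route is James's theorem: a Banach space is reflexive iff every bounded linear functional attains its norm on the closed unit ball. Given nonzero $f\in X^*$, normalize $\|f\|=1$ and put $H=f^{-1}(\{1\})$, a nonempty closed convex (affine) set with $\mathrm{dist}(0,H)=1/\|f\|=1$. Since $H$ is Chebyshev, there is $x_0\in H$ with $\|x_0\|=1$, whence $f(x_0)=1=\|f\|\,\|x_0\|$, so $f$ attains its norm; scaling handles all nonzero $f$, and James's theorem then gives reflexivity (which in particular makes $X$ complete). The delicate point is that James's theorem is usually stated for Banach spaces, so one must first argue that the Chebyshev property forces completeness, or else run the argument inside the completion and transfer it back; beyond this technicality, handled as in \citep[p.~436]{megginson1998introduction}, the remaining steps are routine.
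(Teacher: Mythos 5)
First, note that the paper does not actually prove this theorem: it is quoted as a known characterization with a pointer to \citep[p.~436]{megginson1998introduction}, so there is no in-paper argument to compare against. Measured against the standard proof that the citation refers to, most of your write-up is correct and is indeed the classical route: the forward direction via weak compactness of $C\cap\bar{B}(x,d+\varepsilon)$ (reflexivity plus Mazur) for existence and the midpoint/rotundity argument for uniqueness, and the derivation of rotundity from the Chebyshev hypothesis by showing a flat spot on the sphere yields a segment $[u,v]$ all of whose points are nearest to $0$. These parts are complete as written.

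The genuine gap is the completeness step you flag and then wave away, and it is not a mere technicality --- it is the one place where the theorem has real content beyond routine arguments. James's theorem is genuinely false for incomplete normed spaces: James himself constructed an incomplete normed space on which every bounded linear functional attains its norm. In that space every closed hyperplane $f^{-1}(\{1\})$ is proximinal, so your argument ``hyperplanes are Chebyshev $\Rightarrow$ norm attainment $\Rightarrow$ James $\Rightarrow$ reflexive'' cannot possibly be repaired by a side remark; any correct proof of the reverse implication must extract completeness from the Chebyshev hypothesis using closed convex sets \emph{other than} hyperplanes (or else establish weak compactness of the unit ball directly), and that is precisely the step you have not supplied. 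The natural attempts (e.g., taking $z$ in the completion outside $X$ and considering $C=\{x\in X:\|x-z\|\le\tfrac12\|z\|\}$) run into the further subtlety that rotundity of $X$ need not pass to its completion, so equality in the triangle inequality at the putative nearest point does not immediately give a contradiction. Since the paper itself outsources the entire proof to Megginson, your sketch is no less rigorous than the paper's treatment; but as a standalone proof it is incomplete at exactly this point, and ``handled as in the reference'' is doing all the work for the hardest third of the statement.
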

\begin{remark}
Rotund and reflexive normed spaces include all uniformly rotund Banach spaces \citep[Proposition~5.2.6, Theorem~5.2.15]{megginson1998introduction}, and these, in turn, include $L_p(\Omega, \Sigma, \mu)$ spaces, for $1<p<\infty$, and every Hilbert space \citep{clarkson1936uniformly}.
\end{remark}

\section{High-Level Properties of the Nonlinear Inverse}\label{sec:high level properties}
It is interesting to consider general constructions, such as operator composition, product spaces and operators, domain restriction, and their relation to the inverse. The purpose is to describe the inverse for complex cases using simpler components. In this section we will consider both $\{1,2\}$-inverses and pseudo-inverses.

\begin{claim}[Product operator]\label{cla:product operator}
Let $T_i:V_i\rightarrow W_i$, where $V_i$ and $W_i$ are sets for $1\leq i \leq n$, and let $\inv{T}_i\in T_i\{1,2\}$. Define the product operator $\Pi_i T_i : \Pi_i V_i\rightarrow \Pi_i W_i$ as $(\Pi_i T_i)((v_1,\ldots,v_n)) = (T_1(v_1),\ldots, T_n(v_n))$. Then
\begin{enumerate}
    \item 
    The operator $\inv{(\Pi_i T_i)}:\Pi_i W_i\rightarrow \Pi_i V_i$, defined by 
\begin{align*}
\inv{(\Pi_i T_i)}((w_1,&\ldots,w_n)) = \\
&(\inv{T}_1(w_1),\ldots,
\inv{T}_n(w_n))\;,
\end{align*}
is a $\{1,2\}$-inverse of $\Pi_i T_i$. In addition, the involution property is preserved, namely, $\Pi_i T_i\in(\inv{\Pi_i T_i}) \{1,2\}$.
\item
For every $i$, assume further that $V_i$ and $W_i$ are subsets of normed spaces and $\inv{T}_i$ is also a pseudo-inverse of $T_i$. Consider $\Pi_i V_i$ and $\Pi_i W_i$ as subsets in the normed direct product spaces, each equipped with a norm that is some function of the norms of components, where that function is strictly increasing in each component (e.g., the sum of component norms). Then $\inv{\Pi_i T_i}$ is also a pseudo-inverse of $\Pi_i T_i$.
\end{enumerate}
\end{claim}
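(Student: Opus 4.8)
The plan is to handle the two parts in order: part~1 by a direct coordinatewise verification of MP1--2, and part~2 by checking only BAS (MP2 being already granted by part~1), the whole argument for part~2 hinging on one elementary observation about minimizing a monotone aggregation.

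For part~1 I would use the basic fact that a composition of product operators is the product of the coordinatewise compositions, i.e. $(\Pi_i S_i)\circ(\Pi_i R_i)=\Pi_i(S_i\circ R_i)$ whenever the factors are composable. Hence $(\Pi_i T_i)\,\inv{(\Pi_i T_i)}\,(\Pi_i T_i)=\Pi_i(T_i\inv{T}_i T_i)=\Pi_i T_i$ by MP1 in each coordinate, and MP2 follows the same way from coordinatewise MP2; so $\inv{(\Pi_i T_i)}\in(\Pi_i T_i)\{1,2\}$. Since the roles of operator and inverse are symmetric in MP1--2, the identical computation with $T_i$ and $\inv{T}_i$ exchanged in every coordinate gives $\Pi_i T_i\in(\inv{\Pi_i T_i})\{1,2\}$, which is the preservation of involution. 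This part is pure bookkeeping.

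For part~2, MP2 holds by part~1, so I only need BAS. The key observation is the following: if $\phi:(\RR^+)^n\to\RR^+$ is strictly increasing in each argument and $X_1,\ldots,X_n$ are sets carrying functions $f_i:X_i\to\RR^+$ that each attain a minimum, then a tuple $(x_1,\ldots,x_n)$ minimizes $\phi(f_1(x_1),\ldots,f_n(x_n))$ over $\Pi_i X_i$ if and only if every $x_i$ minimizes $f_i$ over $X_i$; the ``if'' direction uses monotonicity (every coordinate sits at its minimum, so no $v'$ can do better) and the ``only if'' direction uses strict monotonicity (replacing a non-minimizing coordinate by a minimizer strictly decreases one argument, hence strictly decreases $\phi$, a contradiction). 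I would apply this twice. First, with $\phi$ the norm-aggregating function of $\Pi_i W_i$ and $f_i(v_i')=\|T_i(v_i')-w_i\|$: the minima exist because each $\inv{T}_i$ is a valid pseudo-inverse (cf.\ Lemma~\ref{lem:inverse on the image}), so $\arg\min_{v'\in\Pi_i V_i}\|(\Pi_i T_i)(v')-w\|=\Pi_i A_i$ with $A_i=\arg\min_{v_i'}\|T_i(v_i')-w_i\|$. Second, with $\phi$ the norm-aggregating function of $\Pi_i V_i$, $X_i=A_i$, and $f_i(v_i)=\|v_i\|$: the minimizers of $\|v\|$ over $\Pi_i A_i$ are exactly the products of coordinatewise minimizers of $\|\cdot\|$ over the $A_i$. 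Since BAS for $\inv{T}_i$ gives $\inv{T}_i(w_i)\in\arg\min\{\|v_i'\|:v_i'\in A_i\}$, the tuple $\inv{\Pi_i T_i}(w)=(\inv{T}_1(w_1),\ldots,\inv{T}_n(w_n))$ lies in $\arg\min\{\|v\|:v\in\Pi_i A_i\}=\arg\min\{\|v\|:v\in\arg\min_{v'}\|(\Pi_i T_i)(v')-w\|\}$, i.e.\ BAS holds at $w$. Combined with MP2, $\inv{\Pi_i T_i}$ is a pseudo-inverse.

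The only delicate point — and hence the main, though mild, obstacle — is the decoupling observation in part~2: one must genuinely use \emph{strict} monotonicity in each coordinate (not just monotonicity) to force a joint minimizer to be a product of coordinatewise minimizers, and one must note that all the relevant minima exist, which is inherited from each $\inv{T}_i$ being a valid pseudo-inverse. Everything else reduces to the product-composition identity and routine substitution.
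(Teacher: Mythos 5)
Your proposal is correct and follows essentially the same route as the paper: part~1 by coordinatewise verification of MP1--2 (with symmetry giving the involution), and part~2 by using strict monotonicity of the norm-aggregating function to decouple the joint minimization into coordinatewise minimizations, applied first to the residual norms and then to the norms of the minimizers. Your explicit two-stage statement of the decoupling lemma, including the remark that existence of the coordinatewise minima is inherited from each $\inv{T}_i$ satisfying BAS, is just a more carefully spelled-out version of the paper's argument.
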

\begin{proof}
The proof of part~1 is immediate by considering each component separately. For part~2 we need only establish the BAS property for the product. Now, there is some $g:\RR^{+n}\rightarrow\RR^+$ s.t.
\begin{align}
\|&(\Pi_iT_i)(\Pi_i v_i) - \Pi_i w_i\|_{\Pi_i W_i}= g(\|T_1(v_1)\\&-
w_1\|_{W_1},\ldots, \|T_n(v_n)-w_n\|_{W_n})\;,
\end{align}
\ignore{

}
where the subscript of each norm specifies the appropriate space. Moreover, $g$ is strictly increasing in each component, so the l.h.s. is minimized iff each argument on the r.h.s. is minimized. Furthermore, 
\begin{align}
\|&(\inv{T}_1 (w_1),\ldots,\inv{T}_n( w_n))\|_{\Pi_iV_i}\\
&= f(\|\inv{T}_1(w_1)\|_{V_1},\ldots,\|\inv{T}_n(w_n)\|_{V_n})\;,
\end{align}
\ignore{

}
where $f:\RR^{+n}\rightarrow\RR^+$ is strictly increasing in each component, so $\Pi_i v_i = (\inv{T}_1 (w_1),\ldots,\inv{T}_n( w_n))$ is a minimizer of $\|(\Pi_iT_i)(\Pi_j v_j) - \Pi_i w_i\|_{\Pi_i W_i}$ that also has a minimal norm, as required.
\end{proof}

\begin{remark}
In the above claim, the norm of each product space may be the $L_p$ norm of the vector of component norms, for any $1\leq p < \infty$, which is strictly increasing in each component. In particular, if each $V_i$ and $W_i$ is $\RR$ with the absolute value norm, and $\sigma:\RR\rightarrow\RR$ has pseudo-inverse $\finv{\sigma}:\RR\rightarrow\RR$, then $T:\RR^n\rightarrow\RR^n$, the entrywise application of $\sigma$, has pseudo-inverse $(\finv{\sigma},\ldots,\finv{\sigma})$ for $L_p$, $1\leq p<\infty$.
\end{remark}
\begin{remark}
Note that the dependence on component norms mentioned in Claim~\ref{cla:product operator} may be different for the two direct product spaces. In addition, the proof holds even if this dependence is only non-decreasing in each component for the space containing $\Pi_i V_i$.
\end{remark}

\begin{definition}[Norm-monotone operators]
Let $V$, $W$ be subsets of normed spaces. An operator $T:V\rightarrow W$ is \textup{norm-monotone} if for every $v_1,v_2\in V$, $\|v_1\|\leq \|v_2\|$ implies $\|T(v_1)\|\leq \|T(v_2)\|$. 
\end{definition}

\begin{claim}[Composition with bijections]\label{cla:comp invertible}
Let $V$, $V_1$, $W$, and $W_1$ be sets, let $T:V\rightarrow W$, and let $S_1:W\rightarrow W_1$ and $S_2:V_1\rightarrow V$ be bijections. 
\begin{enumerate}
\item 
If $\inv{T}\in T\{1,2\}$, then $S_2^{-1}\inv{T} S_1^{-1}\in (S_1T S_2)\{1,2\}$.
\item
Assume further that $V$, $V_1$, $W$, and $W_1$ are subsets of normed spaces, $a S_1$ is an isometry for some $a\neq 0$, and $S_2^{-1}$ is norm-monotone. If $\inv{T}$ is a pseudo-inverse of $T$ then $S_2^{-1}\inv{T} S_1^{-1}$ is a pseudo-inverse of $S_1 T S_2$.
\end{enumerate}
\end{claim}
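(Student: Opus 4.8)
The plan is to treat the two parts in turn: part~1 is pure composition bookkeeping, and part~2 amounts to pushing the two-stage minimization of the BAS property through the bijections $S_1$ and $S_2$, matching each hypothesis to the appropriate stage.

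For part~1, write $U = S_1 T S_2$ and $\tilde{T} = S_2^{-1}\inv{T}S_1^{-1}$. First I would check MP1 by straight composition: $U\tilde{T}U = S_1\,(T\inv{T}T)\,S_2$, which collapses to $S_1 T S_2 = U$ using MP1 for $\inv{T}$ together with $S_1^{-1}S_1 = \mathrm{id}_W$ and $S_2 S_2^{-1} = \mathrm{id}_V$. The same cancellation with MP2 for $\inv{T}$ gives $\tilde{T}U\tilde{T} = S_2^{-1}\,(\inv{T}T\inv{T})\,S_1^{-1} = \tilde{T}$, so $\tilde{T}\in U\{1,2\}$. Since MP1--2 are symmetric in an operator and its inverse and $S_1^{-1}$, $S_2^{-1}$ are themselves bijections, running the same computation the other way gives $U\in\tilde{T}\{1,2\}$; I would record this so that the involution property is seen to be preserved.

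For part~2, MP2 is already in hand: a pseudo-inverse is in particular a $\{1,2\}$-inverse, so part~1 applies and $\tilde{T}$ satisfies MP2 with respect to $U$. It remains to establish BAS. Fix $w_1\in W_1$ and set $w = S_1^{-1}(w_1)\in W$. \emph{Stage one (first-level minimizers).} Since $aS_1$ is an isometry for some $a\neq 0$, we have $\|S_1 x - S_1 y\| = |a|^{-1}\|x-y\|$ for all $x,y\in W$, hence $\|U(v_1)-w_1\| = |a|^{-1}\|T S_2(v_1) - w\|$; because $S_2$ is a bijection of $V_1$ onto $V$, letting $v = S_2(v_1)$ range over all of $V$ identifies $\arg\min_{v_1\in V_1}\|U(v_1)-w_1\|$ with $S_2^{-1}(A_w)$, where $A_w := \arg\min_{v\in V}\|T(v)-w\|$ is the set of first-level minimizers for $T$ at $w$. \emph{Stage two (minimal norm among those).} Since $\inv{T}$ is a pseudo-inverse of $T$, the BAS property for $\inv{T}$ gives $\inv{T}(w)\in A_w$ with $\|\inv{T}(w)\|\le\|v\|$ for every $v\in A_w$; applying the norm-monotone map $S_2^{-1}$ yields $\|S_2^{-1}(\inv{T}(w))\|\le\|S_2^{-1}(v)\|$ for every $v\in A_w$, i.e. $S_2^{-1}(\inv{T}(w))$ has minimal norm over $S_2^{-1}(A_w)$. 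As $\tilde{T}(w_1) = S_2^{-1}\inv{T}S_1^{-1}(w_1) = S_2^{-1}(\inv{T}(w))$ lies in $S_2^{-1}(A_w)$ and attains that minimum, $\tilde{T}$ satisfies BAS at $w_1$; existence of the relevant minima is inherited from $\inv{T}$ through the two bijections. Together with MP2 this makes $\tilde{T}$ a pseudo-inverse of $U$.

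The algebra of part~1 is routine, so the only place that needs care is lining up each hypothesis of part~2 with the right stage of the BAS minimization: the scalar-isometry assumption on $S_1$ is exactly what keeps the first-stage $\arg\min$ undistorted (it merely rescales the objective by $|a|^{-1}$, so the minimizing set is unchanged and transports cleanly by $S_2^{-1}$), while norm-monotonicity is required only of $S_2^{-1}$, precisely to carry ``minimal norm in $A_w$'' over to ``minimal norm in $S_2^{-1}(A_w)$''. Along the way I would verify explicitly that no linearity of $S_1$ or $S_2$ is invoked and that $S_2$ itself need not be norm-monotone, so that the stated hypotheses are genuinely what is used.
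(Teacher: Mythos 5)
Your proposal is correct and follows essentially the same route as the paper: part~1 by direct cancellation of $S_1^{-1}S_1$ and $S_2S_2^{-1}$ around MP1--2 for $\inv{T}$, and part~2 by identifying the first-stage minimizer set of $S_1TS_2$ at $w_1$ with $S_2^{-1}(A_{S_1^{-1}(w_1)})$ via the scalar-isometry hypothesis and bijectivity of $S_2$, then transporting minimal norm through the norm-monotone $S_2^{-1}$. Your explicit note that MP2 for the composite follows from part~1 is a small clarification the paper leaves implicit.
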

\begin{proof}
For part~1, we have that
\begin{align}
(S_1TS_2)(S_2^{-1}\inv{T} S_1^{-1})(S_1TS_2)&=S_1T\inv{T} TS_2 
\\&= S_1TS_2\;,
\end{align}
and 
\begin{align}
(S_2^{-1}&\inv{T} S_1^{-1})(S_1TS_2)(S_2^{-1}\inv{T} S_1^{-1})
\\&=S_2^{-1}\inv{T} T \inv{T} S_1^{-1}\\
&= S_2^{-1}\inv{T} S_1^{-1}\;,    
\end{align}
\ignore{

}
satisfying MP1--2. 

For part~2, by the BAS property of $\inv{T}$, it holds for every $w_1\in W_1$ that $\inv{T}(S_1^{-1}(w_1))\in \arg\min\{\|v\|: v\in A_{w_1}\}$, where $A_{w_1} = \{v\in V\,\textup{minimizes}\, \|T(v)-S_1^{-1}(w_1)\|\}$. We have that 
\begin{align}%
A_{w_1}&=\{v\in V\,\textup{minimizes}\, \|S_1T(v)-w_1\|\}\label{eq: bijection composition sets1}\\
\label{eq: bijection composition sets2}    
&=S_2(\{u\in V_1\,\textup{minimizes}\,\|S_1TS_2(u)\\
&\hspace{10pt}-w_1\|\})\;,
\end{align}
\ignore{

}
where the first equality holds since $aS_1$ is an isometry for some $a\neq 0$, and the second one holds since $S_2$ is bijective. Let $v'\in A_{w_1}$ have minimal norm, and consider $u'=S_2^{-1}(v')$. By Equations~\eqref{eq: bijection composition sets1}--\eqref{eq: bijection composition sets2}, $u'$ minimizes  $\|S_1TS_2(u)-w_1\|$ over $V_1$. Let $u''\in V_1$ be any minimizer of $\|S_1TS_2(u)-w_1\|$ in $V_1$, so $v''=S_2(u'')\in A_{w_1}$. Since $v'$ has minimal norm, we have that $\|S_2(u')\|\leq \|S_2(u'')\|$, and since $S_2^{-1}$ is norm-monotone, it holds that $\|u'\|\leq\|u''\|$. In other words, $u'$ has minimal norm among minimizers of $\|S_1TS_2(u)-w_1\|$ in $V_1$. We can pick $v' = \inv{T}(S_1^{-1}(w_1))$, so $u' =S_2^{-1}\inv{T}(S_1^{-1}(w_1))$, and thus 
\begin{align}\label{eq:inverse for s1Ts2}
    S_2^{-1}&\inv{T}(S_1^{-1}(w_1))\in\arg\min\{\|u\|: u\in V_1\,\\&\textup{minimizes}\, \|S_1T(S_2(u))-w_1\|\}\;.
\end{align}
\ignore{

}
That is, $S_2^{-1}\inv{T} S_1^{-1}$ satisfies the BAS property as required from a pseudo-inverse of $S_1 T S_2$.
\end{proof}

\begin{corollary}\label{cor:affine manipulations of operator}
If $V$ and $W$ are normed spaces, and $T:V\rightarrow W$ has a pseudo-inverse $\finv{T}:W\rightarrow V$, then for any scalars $a,b\neq 0$ and vector $w_0\in W$, $ b^{-1}\finv{T}(a^{-1}(w-w_0))$ is a pseudo-inverse of $aT(bv)+w_0$.
\end{corollary}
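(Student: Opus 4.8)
The plan is to express the operator $aT(bv)+w_0$ as a composition $S_1\circ T\circ S_2$ of $T$ with two bijections and then apply part~2 of Claim~\ref{cla:comp invertible}. Concretely, I would take $S_2:V\to V$ to be scalar multiplication by $b$, namely $S_2(v)=bv$, and $S_1:W\to W$ to be the affine bijection $S_1(w)=aw+w_0$; these are well defined since $V$ and $W$ are vector spaces, and they are bijections because $a,b\neq 0$, with inverses $S_2^{-1}(v)=b^{-1}v$ and $S_1^{-1}(w)=a^{-1}(w-w_0)$. A one-line computation then gives $(S_1\circ T\circ S_2)(v)=a\,T(bv)+w_0$, so it suffices to pseudo-invert $S_1 T S_2$.

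Next I would verify the two metric hypotheses of part~2 of Claim~\ref{cla:comp invertible} (taking $V_1=V$ and $W_1=W$). First, the scalar multiple $a^{-1}S_1$ is an isometry: for $w,w'\in W$ one has $\|a^{-1}S_1(w)-a^{-1}S_1(w')\|=\|w-w'\|$ (it is the translation $w\mapsto w+a^{-1}w_0$), so $S_1$ satisfies the requirement that $\lambda S_1$ be an isometry for some $\lambda\neq 0$, here with $\lambda=a^{-1}$. Second, $S_2^{-1}$ is norm-monotone, in fact norm-homogeneous, since $\|S_2^{-1}(v)\|=|b|^{-1}\|v\|$, so $\|v_1\|\le\|v_2\|$ forces $\|S_2^{-1}(v_1)\|\le\|S_2^{-1}(v_2)\|$.

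Having checked the hypotheses, part~2 of Claim~\ref{cla:comp invertible} gives directly that $S_2^{-1}\circ\finv{T}\circ S_1^{-1}$ is a pseudo-inverse of $S_1 T S_2$, and substituting the explicit forms of the inverse maps yields $\big(S_2^{-1}\circ\finv{T}\circ S_1^{-1}\big)(w)=b^{-1}\finv{T}\big(a^{-1}(w-w_0)\big)$, which is precisely the claimed operator. I expect no genuine obstacle here: the only point requiring a moment of care is matching the corollary's scalars $a,b$ with the generic isometry constant and the norm-monotone map appearing in Claim~\ref{cla:comp invertible}, together with the observation that an additive shift by $w_0$ does not spoil the isometry property of $a^{-1}S_1$.
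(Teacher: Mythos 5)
Your proof is correct and follows exactly the paper's route: decompose $aT(bv)+w_0$ as $S_1TS_2$ with $S_1(w)=aw+w_0$ and $S_2(v)=bv$, check that $a^{-1}S_1$ is an isometry and $S_2^{-1}$ is norm-monotone, and invoke part~2 of Claim~\ref{cla:comp invertible}. The paper's own proof is the same argument, just stated more tersely (it asserts the hypotheses are "clearly" satisfied where you verify them explicitly).
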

\begin{proof}
Let $V_1=V$, $W_1=W$, $S_1:W\rightarrow W_1$ defined as $S_1(w)=aw+w_0$, and $S_2:V_1\rightarrow V$ defined as $S_2(v)=bv$. Clearly, $S_1$ and $S_2$ are bijections that satisfy the conditions of part~2 of Claim~\ref{cla:comp invertible}. Then $S_2^{-1}\finv{T} S_1^{-1}=b^{-1}\finv{T}(a^{-1}(w-w_0))$ is a pseudo-inverse of $S_1TS_2$.
\end{proof}

\begin{remark}
Norm-monotone bijections form a fairly rich family of operators. Given normed spaces $V$ and $W$, for any bijection $T_1:\{v\in V:\|v\|=1\}\rightarrow \{w\in W:\|w\|=1\}$ and any surjective and strictly increasing $f:\RR^+\rightarrow\RR^+$, the operator $T:V\rightarrow W$, defined as  $T(v)=f(\|v\|)T_1(v/\|v\|)$ for $v\neq 0$ and $T(0)=0$, is a norm-monotone bijection. Note also that any bijective isometry from $V$ to $W$ that maps $0$ to $0$, multiplied by some nonzero scalar, is a norm-monotone bijection. Bijective isometries between normed spaces over $\RR$ are always affine by the Mazur-Ulam theorem \citep{mazur1932transformations}, but may be non-affine for spaces over $\CC$.
\end{remark}

\begin{claim}[Domain restriction changes the inverse]\label{cla:restriction}
Let $V,W$ be sets, $V_1\subsetneq V$, $T:V\rightarrow W$, and $\inv{T}\in T\{1,2\}$. Let $T_1 = T|_{V_1}$, and let $\inv{T}_1\in T_1\{1,2\}$. Then for every $w\in T(V)\setminus T(V_1)$, necessarily $\inv{T}_1(w)\neq \inv{T}(w)$.
\end{claim}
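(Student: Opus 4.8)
The plan is to play off two constraints against each other: the codomain of $\inv{T}_1$ is forced to be $V_1$, while on the image of $T$ any $\{1,2\}$-inverse must return a genuine source of its argument. These two facts become incompatible exactly when the argument has no source inside $V_1$, which is the hypothesis $w\in T(V)\setminus T(V_1)$.

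First I would record the trivial-but-crucial observation that $T_1 = T|_{V_1}$ is an operator from $V_1$ to $W$, so by the very definition of a $\{1,2\}$-inverse we have $\inv{T}_1\colon W\rightarrow V_1$; in particular $\inv{T}_1(w)\in V_1$ for the fixed $w$ under consideration. Next, since $w\in T(V)$, I would use MP1 (equivalently, the first case in Lemma~\ref{lem: equivalent meaning of MP1-2 different spaces}): writing $w = T(v)$ for some $v\in V$, we get $w = T(v) = T\inv{T}T(v) = T\inv{T}(w)$, so $\inv{T}(w)$ is an actual source of $w$ under $T$, i.e. $T(\inv{T}(w)) = w$.

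Then I would conclude by contradiction. Suppose $\inv{T}_1(w) = \inv{T}(w)$ and denote this common element $v_0$. By the first step $v_0\in V_1$; by the second step $T(v_0) = w$. Hence $w = T(v_0)\in T(V_1)$, contradicting $w\in T(V)\setminus T(V_1)$. Therefore $\inv{T}_1(w)\neq\inv{T}(w)$.

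I do not expect any genuine obstacle here: the argument is just a matter of keeping the domains and codomains straight — the value $\inv{T}_1(w)$ is pinned inside $V_1$, whereas $\inv{T}(w)$ is pinned to be a preimage of $w$ under $T$, and no element of $V_1$ can be such a preimage when $w\notin T(V_1)$.
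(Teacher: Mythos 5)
Your proof is correct and follows essentially the same route as the paper's: both hinge on the fact that $\inv{T}(w)$ must be a genuine $T$-source of $w$ (via MP1 / Lemma~\ref{lem: equivalent meaning of MP1-2 different spaces}), while $\inv{T}_1(w)$ is confined to $V_1$, where $w$ has no source. The only cosmetic difference is that the paper phrases the second constraint as ``$\inv{T}_1(w)$ is a source of some element of $T(V_1)$, hence $T(\inv{T}_1(w))\neq w$,'' whereas you use the codomain $\inv{T}_1\colon W\to V_1$ directly; both are immediate and equivalent here.
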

\begin{proof}
By Lemma~\ref{lem: equivalent meaning of MP1-2 different spaces}, for every $w\in T(V_1)$, $\inv{T}_1(w)$ must be a source of $w$ under $T$ in $V_1$ and $\inv{T}(w)$ must be a source of $w$ under $T$ in $V$, so it is possible (but not necessary) that $\inv{T}(w)=\inv{T}_1(w)$. However, for an element $w\in T(V)\setminus T(V_1)$, $\inv{T}(w)$ is a source of $w$ under $T$, while $\inv{T}_1(w)$ is a source of an element in $T(V_1)$. Therefore, $\inv{T}_1(w)\neq \inv{T}(w)$ necessarily. 
\end{proof}

\begin{remark}
If $\inv{T}_1$ and $\inv{T}$ are pseudo-inverses, such a necessary divergence between $\inv{T}$ and $\inv{T}_1$ might occur even for some $w\in T(V_1)$. Since $\inv{T}_1(w)$ must be a source in $V_1$ under $T$ with smallest norm, and $\inv{T}(w)$ must be a source under $T$ with the smallest norm in $V$, these smallest norms might be different, constraining that $\inv{T}_1(w)\neq\inv{T}(w)$.
\end{remark}

\begin{claim}[Composition]\label{cla:inv of composition}
Let $U,V,W$ be sets, and let $T:U\rightarrow V$ and $S:V\rightarrow W$.
\begin{enumerate}
    \item 
    If $\inv{S}_1\in S|_{T(U)}\{1,2\}$ and $\inv{T}\in T\{1,2\}$, then $\inv{T} \inv{S}_1 \in (ST)\{1,2\}$. Consequently, if $\inv{S}\in S\{1,2\}$ and $T$ is onto $V$, then $\inv{T}\inv{S}$ is a
    $\{1,2\}$-inverse of $ST$, but if $T$ is not onto, that does not necessarily hold.
    \item
    Assume that $U$, $V$, and $W$ are subsets of normed spaces, and let $\finv{T}$ and $\finv{S}_1$ be pseudo-inverses of $T$ and $S|_{T(U)}$, respectively. There is a setting where $ST$ has a unique pseudo-inverse, and it is not $\finv{T} \finv{S}_1$.
\end{enumerate}
\end{claim}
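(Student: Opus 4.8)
For part~1, the plan is to reduce everything to the combinatorial characterization of $\{1,2\}$-inverses in Lemma~\ref{lem: equivalent meaning of MP1-2 different spaces}. Write $R=ST:U\to W$ and note that its image is $R(U)=S(T(U))$, which is exactly the image of the restriction $S|_{T(U)}$. First I take $w\in R(U)$ and chase a source of $w$ under $R$: since $w$ lies in the image of $S|_{T(U)}$, Lemma~\ref{lem: equivalent meaning of MP1-2 different spaces} applied to $\inv{S}_1$ gives $\inv{S}_1(w)\in T(U)$ with $S(\inv{S}_1(w))=w$; then, because $\inv{S}_1(w)\in T(U)$, the same lemma applied to $\inv{T}$ gives $T(\inv{T}(\inv{S}_1(w)))=\inv{S}_1(w)$, hence $R(\inv{T}\inv{S}_1(w))=S(\inv{S}_1(w))=w$, i.e. $\inv{T}\inv{S}_1(w)\in R^{-1}(\{w\})$. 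Next, for $w\notin R(U)$ I use that $\inv{S}_1(w)=\inv{S}_1(w')$ for some $w'$ in the image of $S|_{T(U)}$, which is $R(U)$, so $\inv{T}\inv{S}_1(w)=\inv{T}\inv{S}_1(w')$. Both clauses of Lemma~\ref{lem: equivalent meaning of MP1-2 different spaces} then hold, giving $\inv{T}\inv{S}_1\in(ST)\{1,2\}$. The stated consequence is immediate: $T$ onto $V$ forces $T(U)=V$ and hence $S|_{T(U)}=S$, so any $\inv{S}\in S\{1,2\}$ is also a $\{1,2\}$-inverse of $S|_{T(U)}$, and the first part applies with $\inv{S}_1=\inv{S}$.

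For the claimed failure when $T$ is not onto, I will exhibit a small finite example: $U=\{a,b\}$, $V=\{1,2,3\}$, $W=\{x,y\}$, with $T(a)=1$, $T(b)=2$ (so $T(U)=\{1,2\}\subsetneq V$), and $S(1)=x$, $S(2)=S(3)=y$. By Lemma~\ref{lem: equivalent meaning of MP1-2 different spaces}, the maps $\inv{S}(x)=1$, $\inv{S}(y)=3$ and $\inv{T}(1)=a$, $\inv{T}(2)=b$, $\inv{T}(3)=a$ are valid $\{1,2\}$-inverses of $S$ and $T$. But $ST(a)=x$, $ST(b)=y$, while $\inv{T}\inv{S}(y)=\inv{T}(3)=a$, so $ST\,\inv{T}\inv{S}\,ST(b)=ST(a)=x\neq y=ST(b)$; MP1 fails and $\inv{T}\inv{S}\notin(ST)\{1,2\}$. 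The obstruction this isolates is that a $\{1,2\}$-inverse of $S$ may legitimately send a point of $S(V)$ to a source lying outside $T(U)$, which $\inv{T}$ then mis-maps.

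For part~2, the plan is to produce a normed-space setting in which $\finv{T}\finv{S}_1$ is a valid $\{1,2\}$-inverse of $ST$ (automatic from part~1) but violates the BAS property. Take $U=\{1,-3\}$, $V=\{-2,1\}$, $W=\{0\}$, all as subsets of $\RR$ with the absolute value, and let $T(1)=-2$, $T(-3)=1$, so $T$ is a bijection onto $V$ and, by Claim~\ref{cla:full inv of a bijection}, $\finv{T}=T^{-1}$ is its unique pseudo-inverse. Let $S:V\to W$ be the constant map to $0$; then $S|_{T(U)}=S$, and its unique pseudo-inverse sends $0$ to the minimal-norm element of $V$, namely $\finv{S}_1(0)=1$. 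Likewise $ST:U\to W$ is constant, with unique pseudo-inverse sending $0$ to the minimal-norm element of $U$, namely $1$. However $\finv{T}\finv{S}_1(0)=\finv{T}(1)=-3\neq 1$, so the composition, while a legitimate $\{1,2\}$-inverse of $ST$, is not its (unique) pseudo-inverse.

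The computations here are routine; the only point requiring care is verifying that the maps proposed in the two examples really are $\{1,2\}$- or pseudo-inverses of the building blocks and that the stated uniqueness holds, all of which follow directly from Lemma~\ref{lem: equivalent meaning of MP1-2 different spaces}, Claim~\ref{cla:new def dependencies}, and Claim~\ref{cla:full inv of a bijection}. The conceptual crux of part~2, and the reason no naive composition rule for pseudo-inverses can hold, is that $T$ may reverse the norm order between $U$ and $V$: the minimal-norm preimage in $V$ of a target in $W$ need not lie above a minimal-norm preimage in $U$.
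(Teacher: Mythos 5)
Your proposal is correct and takes essentially the same route as the paper: part~1 is verified through Lemma~\ref{lem: equivalent meaning of MP1-2 different spaces} exactly as in the paper's proof, and your two counterexamples (a $\{1,2\}$-inverse of $S$ sending a point of $S(V)$ to a source outside $T(U)$ when $T$ is not onto, and a norm-order-reversing bijection $T$ composed with a constant $S$ for part~2) are concrete instances of the very constructions the paper uses. No gaps.
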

\begin{proof}
For part~1, let $w\in W$ be an element in the image of $ST$. Since $\inv{S}_1\in S|_{T(U)}\{1,2\}$, then by Lemma~\ref{lem: equivalent meaning of MP1-2 different spaces}, $v=\inv{S}_1(w)$ is a source for $w$ under $S$, and it is also in $T(U)$. Since $\inv{T}\in T\{1,2\}$, $\inv{T}(v)$ is a source of  $v$ under $T$. Therefore, $\inv{T} \inv{S}_1(w)$ is a source for $w$ under $ST$. If $w\in W$ is not in the image of $ST$, then $w'= S|_{T(U)} \inv{S}_1(w)$ is. This is true because, by Lemma~\ref{lem: equivalent meaning of MP1-2 different spaces}, $\inv{S}_1(w)$ is the source in $T(U)$ of some element $w'$ in the image of $ST$. Then
\begin{equation}
\inv{T} \inv{S}_1(w) = \inv{T} \inv{S}_1 S|_{T(U)} \inv{S}_1(w)= \inv{T} \inv{S}_1(w')\;.
\end{equation}
Namely, $\inv{T} \inv{S}_1(w)$ is the inverse value of some element in the image of $ST$, as required by Lemma~\ref{lem: equivalent meaning of MP1-2 different spaces}. Therefore, $\inv{T} \inv{S}_1\in (ST)\{1,2\}$, and if $T$ is onto, then $\inv{T} \inv{S}\in (ST)\{1,2\}$. 

For a counterexample when $T$ is not onto, let $U=\{u_1,u_2,u_3\}$, $V=\{v_1,v_2,v_3\}$, $W=\{w_1,w_2\}$, $T(u_1)=T(u_2) = v_2$, $T(u_3)=v_3$, $S(v_1)=S(v_3) = w_2$, $S(v_2)=w_1$. Defining $\inv{T}(v_3)=u_3$, $\inv{T}(v_1)=\inv{T}(v_2)=u_1$, $\inv{S}(w_1)=v_2$, and $\inv{S}(w_2)=v_1$ satisfies MP1--2, as may be easily verified by Lemma~\ref{lem: equivalent meaning of MP1-2 different spaces}. However, $\inv{T} \inv{S}(w_2) = u_1$, and the only source of $w_2$ under $ST$ is $u_3$, so $\inv{T} \inv{S}$ cannot be a $\{1,2\}$-inverse of $ST$.

For the second part, Let $U=\{u_1,u_2\}$, $\|u_2\| < \|u_1\|$, $V=\{v_1,v_2\}$, $\|v_1\|<\|v_2\|$, $W=\{w\}$, $S(v_1)=S(v_2)=w$, $T(u_1)=v_1$, and $T(u_2)=v_2$. Note that $T$ is a bijection. It is easy to verify that there is a unique valid choice for $\finv{T}$, namely, $\finv{T}(v_1)=u_1$ and $\finv{T}(v_2)=u_2$, and a unique valid choice for $\finv{S}_1$, namely, $\finv{S}_1(w)=v_1$. Similarly, there is a unique valid choice $\finv{(ST)}$ for a pseudo-inverse of $ST$, which is $\finv{(ST)}(w)=u_2$. Since $\finv{T} \finv{S}_1(w)=u_1$, $\finv{T} \finv{S}_1$ cannot be a pseudo-inverse of $ST$.
%\end{proofof}
\end{proof}

A specific composition of particular interest is that of a projection applied after an operator, even (perhaps especially) if the operator does not have a pseudo-inverse. One such case is that of a neural layer, which will be considered in Section \ref{sec:test cases}.
\begin{theorem}[Left-composition with projection]\label{thm:projection applied after operator}
Let $T:V\rightarrow W$, where $V$ and $W$ are subsets of normed spaces, let $\emptyset\neq C\subseteq T(V)$ be a Chebyshev set, and let $P_C:W\rightarrow W$ be the projection operator onto $C$. 
\begin{enumerate}
    \item 
    For $w\in W$, write $B_w=\arg\min\{\|v\|: v\in V,\,P_CT(v)=P_C(w)\}$, and let $W'\subseteq W$ be the set of elements $w\in W$ for which $B_w$ is well-defined. Then $w\in W'\setminus C$ implies $P_C(w)\in W'\cap C$, 
    and the following recursive definition yields a pseudo-inverse $\finv{(P_C T)}:W'\rightarrow V$:
    %%%
    \ignore{
    
    }
\begin{align*}
     \finv{(P_CT)}(w) = \begin{cases}
      \textup{some}\;v\in B_w,\hspace{13pt} w\in W'\cap C\\
      \finv{(P_C T)}(P_C(w)), w\in W'\setminus C\,.
    \end{cases}
\end{align*}

\item
If, in addition, $T$ is continuous, $W$ is a Hilbert space, and every closed ball is compact in $V$, then the pseudo-inverse of part~1 is defined on all of $W$ ($W'=W$).
\end{enumerate}
\end{theorem}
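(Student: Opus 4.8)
The plan is to do both parts through the single auxiliary operator $R:=P_CT:V\to W$, whose image is $R(V)=C$: clearly $R(V)\subseteq C$, and $R(V)\supseteq P_C(C)=C$ because $C\subseteq T(V)$ and $P_C$ fixes every point of $C$. The identity that carries the argument is that, for every $w\in W$,
\[
\arg\min_{v'\in V}\|P_CT(v')-w\|=\{v'\in V: P_CT(v')=P_C(w)\}.
\]
For ``$\subseteq$'': $\|R(v')-w\|\ge d(w,C)=\|P_C(w)-w\|$ since $R(v')\in C$, and equality forces $R(v')$ to be a nearest point of $C$ to $w$, hence $R(v')=P_C(w)$ by the Chebyshev (uniqueness) hypothesis; the minimum is indeed attained, since $P_C(w)\in C=R(V)$. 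Because $P_C$ is idempotent, the set on the right is unchanged when $w$ is replaced by $P_C(w)$, so $B_{P_C(w)}=B_w$; applied to $w\in W'$ this immediately gives $P_C(w)\in W'\cap C$ (the first assertion of part~1) and shows the recursion defining $\finv{(P_CT)}$ collapses after one step, so that $\finv{(P_CT)}(w)\in B_w$ for \emph{every} $w\in W'$.

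Part~1 then follows quickly. By the displayed identity, the requirement ``$\finv{(P_CT)}(w)$ has minimal norm among the minimizers of $\|P_CT(v')-w\|$'' is precisely the BAS property, which therefore holds at every $w\in W'$. For MP2 at $w\in C\cap W'$, note $\finv{(P_CT)}(w)\in B_w$ is in particular a source of $P_C(w)=w$ under $R$, so $R\finv{(P_CT)}(w)=w$ and hence $\finv{(P_CT)}\,R\,\finv{(P_CT)}(w)=\finv{(P_CT)}(w)$; for $w\in W'\setminus C$ this extends through $\finv{(P_CT)}(w)=\finv{(P_CT)}(P_C(w))$ with $P_C(w)\in C\cap W'$. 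Thus BAS and MP2 hold throughout $W'$ (and with them MP1, by Claim~\ref{cla:new def dependencies}), so $\finv{(P_CT)}$ is a pseudo-inverse on $W'$.

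For part~2 it suffices, by the definition of $W'$, to show $B_w$ is well-defined for \emph{every} $w\in W$, i.e.\ that $S_w:=\{v\in V: P_CT(v)=P_C(w)\}$ is nonempty and $\mu:=\inf\{\|v\|:v\in S_w\}$ is attained. For nonemptiness, $P_C(w)\in C\subseteq T(V)$, so $T(v_0)=P_C(w)$ for some $v_0\in V$, whence $P_CT(v_0)=P_C(P_C(w))=P_C(w)$, i.e.\ $v_0\in S_w$. For attainment, pick $v_k\in S_w$ with $\|v_k\|\to\mu$; for large $k$, $v_k$ lies in the closed ball $\bar B(0,\mu+1)$, which is compact in $V$ by hypothesis, so along a subsequence $v_k\to v^*\in V$ with $\|v^*\|=\mu$. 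Continuity of $T$ gives $T(v_k)\to T(v^*)$; writing $c:=P_C(w)$, $y_k:=T(v_k)$ and $y:=T(v^*)$, we have $P_CT(v_k)=c$, i.e.\ $\|y_k-c\|\le\|y_k-c'\|$ for every $c'\in C$, and letting $k\to\infty$ yields $\|y-c\|\le\|y-c'\|$ for all $c'\in C$; hence $c$ is a nearest point of $C$ to $y$, so $c=P_C(y)=P_CT(v^*)$ by uniqueness, i.e.\ $v^*\in S_w$. Thus $v^*$ attains $\mu$, $B_w$ is well-defined, and since $w$ was arbitrary, $W'=W$, so the pseudo-inverse of part~1 is defined on all of $W$.

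The step I expect to be the main obstacle is this limit passage involving $P_C$ (and its twin inside the displayed identity): the tempting move is to invoke continuity of the metric projection, but that can fail for a Chebyshev set that is not convex, and we have not assumed $C$ convex. The remedy, used identically in both parts, is to sidestep continuity of $P_C$ and use only what ``Chebyshev'' actually supplies: the nearest-point inequality $\|x-P_C(x)\|\le\|x-c'\|$ survives taking a limit in $x$, and uniqueness of the nearest point then pins down its value. Everything else — the identity $R(V)=C$, idempotence of $P_C$, the one-step collapse of the recursion, and extracting a norm-minimizing limit from compactness of closed balls — is routine.
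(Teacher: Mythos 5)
Your proof of part~1 is essentially the paper's: the same key observation that, by the Chebyshev property together with $C\subseteq T(V)$, a point $v$ minimizes $\|P_CT(v)-w\|$ iff $P_CT(v)=P_C(w)$, whence $B_{P_C(w)}=B_w$, the recursion collapses after one step, and BAS and MP2 are verified case by case exactly as in the paper. Part~2, however, follows a genuinely different route. The paper asserts that a Chebyshev set in a Hilbert space is closed and convex, invokes Theorem~\ref{thm:projection is continuous} to get continuity of $P_C$ (hence of $P_CT$), deduces that $(P_CT)^{-1}(\{w\})$ is closed, and then finishes with compactness of balls and the extreme value theorem. You instead take a norm-minimizing sequence in $S_w=\{v\in V: P_CT(v)=P_C(w)\}$, extract a convergent subsequence using compactness of closed balls in $V$ and continuity of $T$, and show the limit stays in $S_w$ by passing the nearest-point inequality $\|T(v_k)-c\|\leq\|T(v_k)-c'\|$ to the limit and appealing only to uniqueness of the nearest point. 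This avoids continuity (and convexity) of $P_C$ altogether; it therefore sidesteps the most delicate ingredient of the paper's argument (convexity of Chebyshev sets in an infinite-dimensional Hilbert space is not a settled general fact), and it never actually uses the Hilbert-space hypothesis on $W$, so your argument establishes $W'=W$ with $W$ merely a subset of a normed space. What the paper's route buys, when its ingredients are available, is the stronger topological statement that every fiber $(P_CT)^{-1}(\{w\})$ is closed; but for the conclusion as stated your sequential argument is correct and strictly more general.
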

\begin{proof}
\begin{enumerate}
    \item 
    For any $w\in W$, BAS necessitates that
    \begin{align}
    \finv{(P_CT)}&(w)\in \arg\min\{\|v\|: v\in V\,\\
    &\textup{minimizes}\, \|P_CT(v)-w\|\}\;.
    \end{align}
    \ignore{
    
    }
Since $C\subseteq T(V)$, there is some $v_0$ s.t. $T(v_0)=P_C(w)$, and thus $P_CT(v_0)=P_C(w)$. Since $C$ is a Chebyshev set, $v\in V$ minimizes $\|P_CT(v)-w\|$ iff $P_CT(v)=P_C(w)$, and the BAS requirement becomes 
\begin{align}
\finv{(P_CT)}&(w)\in \arg\min\{\|v\|: v\in V,\,\\
&P_CT(v)=P_C(w)\}=B_w\;.   
\end{align}
\ignore{

}
If $w\in W'\cap C$, then our definition satisfies BAS and also MP2, since 
\begin{align}
\finv{(P_CT)}(P_CT)&\finv{(P_CT)}(w) \\&= \finv{(P_CT)}(P_C(w))\\
&=\finv{(P_CT)}(w)\;.
\end{align}
If $w\in W'\setminus C$, then by definition of $B_w$, $B_{P_C(w)}=B_w$. Thus, $B_{P_C(w)}$ is well-defined, so $P_C(w)\in W'\cap C$ and $\finv{(P_C T)}(P_C(w))$ is already defined. It satisfies BAS since it is in $B_{P_C(w)}$, and also satisfies MP2 since 
\begin{align}
    &\finv{(P_CT)}(P_CT)\finv{(P_CT)}(w) \\
    &= \finv{(P_CT)}(P_CT)\finv{(P_CT)}(P_C(w))\\ 
    &= \finv{(P_CT)}(P_C(w))\\
    &= \finv{P_C}(w)\;.
\end{align}
\ignore{

}
\item
A Chebyshev set in a Hilbert space is closed and convex, so by Theorem~\ref{thm:projection is continuous}, $P_C$ is continuous, so $P_CT$ is continuous as well. Therefore, for any $w\in C$, $A=(P_CT)^{-1}(\{w\})$ is closed. There is some $v_0\in V$ s.t. $P_CT(v_0)=w$, so $A_1=A\cap \{v\in V:\|v\|\leq \|v_0\|\}\neq \emptyset$. Since $A_1$ is compact, the norm function attains a minimum $m_w$ on that set by the extreme value theorem, and $B_w = \{v\in V: \|v\|=m_w\}$ is well-defined. For any $w\in W$, $B_w = B_{P_C(w)}$ is thus also well-defined, so $W'=W$.
\end{enumerate}
\end{proof}

\section{Test Cases}\label{sec:test cases}
In this section we consider nonlinear pseudo-inverses for specific cases. Recall that a valid pseudo-inverse $\finv{T}$ of an operator $T:V\rightarrow W$ should satisfy BAS and MP2. 
\subsection{Some One-dimensional Operators}
In Table~\ref{tab:pinv1d} we give the pseudo-inverses for some simple operators over the reals, with the Euclidean norm (the only norm up to scaling). The domain of the pseudo-inverse shows where it may be defined. 
We shall elaborate on the derivations for two of the operators and defer the explanations for the rest to the appendix. Below we write 
$\sgn_\epsilon(v) = \min\{1,\max\{-1,v/\epsilon\}\}$ for $\epsilon>0$, so $\sgn(v) = \lim_{\epsilon\rightarrow 0+}   \sgn_\epsilon(v)$, pointwise. To be clear, $\sgn(v)=\Ind{v>0}-\Ind{v<0}$.

\begin{table*}[htbp]
\caption{Pseudo-inverses of some one-dimensional operators, $T:\RR\rightarrow\RR$.}
\label{tab:pinv1d}
\centering
\scalebox{1}{
    \begin{tabular}{|c|c|c|c|}
    \hline
    $T(v)$ & $\finv{T}(w)$ & Domain of $\finv{T}$ & Unique\\
    \hline
    $v^2$  &  $\pm\sqrt{\max\{w,0\}}$ & $\RR$ & --\\
    $(v-a)^2$, $a\neq 0$  & $a-\sgn(a)\sqrt{\max\{w,0\}}$ & $\RR$ & +\\
    $\max\{v,0\}$ (ReLU)& $\max\{w,0\}$ & $\RR$ & +\\
    $v\cdot\Ind{|v|\geq a}$, $a\geq 0$ & $\sgn(w)\cdot\Ind{|w|>\frac{a}{2}}\max(a,|w|\}$ & $\RR$ & +\\
    $\sgn(v)\max\{|v|-a,0\}$, $a\geq 0$ & $\sgn(w)(|w|+a)$ & $\RR$ & +\\
    $\tanh(v)$  & $\arctanh(w)$ & $(-1,1)$& +\\
    $\sgn(v)$  & $0$  & $[-\frac{1}{2},\frac{1}{2}]$& +\\
    $\sgn_\epsilon(v)$, $\epsilon>0$ & $\epsilon \sgn_1(w)$ & $\RR$& +\\
    $\exp(v)$ & $\log(w)$  & $(0,\infty)$& +\\
    $\sin(v)$ &  $\arcsin(\min\{1,\max\{-1,w\}\})$ & $\RR$& +\\
    \hline
    \end{tabular}}
\end{table*}

Let $\bm{T(v)=(v-a)^2}$, $V=W=\RR$, $0\neq a\in\RR$. Note that $T(V)=[0,\infty)$, and for every $w\geq 0$, the sources of $w$ are $a\pm\sqrt{w}$, of which $a-\sgn(a)\sqrt{w}$ has the single smallest norm. For $w\notin T(V)$, the single closest element in $T(V)$ is $0$. Writing $W' = \{w\in W: \arg\min\{\|w_0-w\|:w_0\in T(V)\}\,\textup{is a singleton}\}$, we have $W'=\RR$. By Claim~\ref{cla:exist and unique}, a pseudo-inverse $\finv{T}:\RR\rightarrow \RR$ exists and it is unique. Directly from BAS, necessarily $\finv{T}(w) = a-\sgn(a)\sqrt{w}$, for $w\geq 0 $, and $\finv{T}(w)=a$, for $w<0$.

Let $\bm{T(v)=\max\{v,0\}}$ (ReLU), $V=W=\RR$. The ReLU function is a projection on the closed convex set $C=[0,\infty)$, which contains $0$, and by Corollary~\ref{cor:projection general inverse} it is its own unique pseudo-inverse, namely, $\finv{T}(w) = \max\{w,0\}$. Note that this generalizes easily to the operator $T:\RR^n\rightarrow \RR^n$, which applies ReLU entrywise, with the Euclidean norm. There we have a projection on the closed convex set $[0,\infty)^n$, and again by Corollary~\ref{cor:projection general inverse}, the operator is its own unique pseudo-inverse, namely, $\finv{T} = T$.

\subsection{A Neural Layer}
In this subsection we consider the pseudo-inverse of a single layer of a trained multilayer perceptron.

Let $V=\RR^n$ and $W=\RR^m$, let $A$ be an $m$ by $n$ matrix, and let $\sigma:\RR\rightarrow\RR$ be a transfer function. We will consider $\sigma$ to be either $\tanh$ or ReLU.
With some abuse of notation, we will also write $\sigma:W\rightarrow W$ for the elementwise application of $\sigma$ to a vector in $W$. Given $v\in V$, a neural layer operator $T:V\rightarrow W$ is then defined as $T(v) = \sigma(Av)$, which is continuous for both choices of $\sigma$ and smooth for $\tanh$. We consider the $L_2$ norm for both spaces and make the simplifying assumption that $A(V)=W$ (thus $m\leq n$). 

Note that Claim~\ref{cla:inv of composition} readily yields a $\{1,2\}$-inverse for this setting. In particular, for $\sigma=\textup{ReLU}$, we have that $\finv{A}\sigma\in (\sigma A)\{1,2\}$, since the pseudo-inverses of $A$ and $\sigma$ are also $\{1,2\}$-inverses, and entrywise ReLU is its own pseudo-inverse. We next consider pseudo-inverses.

\begin{claim}\label{cla:neural layer}
Let $T:V\rightarrow W$, where $V=\RR^n$, $W=\RR^m$, $m\leq n$, $T = \sigma A$, and $A$ is a full-rank matrix.
\begin{enumerate}
    \item
    If $\sigma = \tanh$, then $T$ has a unique pseudo-inverse, $\finv{T}:(-1,1)^m\rightarrow V$, defined as $\finv{T}(w) = \finv{A}(\arctanh(w))$.
   
 \item 
    Let $\sigma = \tanh$, let $1<k\in \NN$, and define $C_k = [-1+1/k,1-1/k]^m$ and $T_k = P_{C_k}T$. Then $\lim_{k\rightarrow\infty} T_k=T$, and we may define a valid pseudo-inverse $\finv{T}_k:W\rightarrow V$. 
\item
For $\sigma=\textup{ReLU}$, we can recursively define a valid pseudo-inverse $\finv{T}:W\rightarrow V$, where for $w\in W\setminus [0,\infty)^m$ we have $\finv{T}(w)=\finv{T}(\sigma(w))$, and for $w\in [0,\infty)^m$, $\finv{T}(w)$ is the solution 
to the quadratic program
$\min \|v\|^2$ s.t. $(Av)_i = w_i,$ $\forall i\; w_i>0,$ and 
$(Av)_i\leq 0,$ $\forall i\;w_i=0$.

\ignore{

}
\end{enumerate}
\end{claim}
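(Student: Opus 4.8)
The plan is to verify, for each of the three parts, that the proposed operator satisfies the two defining requirements of a pseudo-inverse, namely BAS and MP2, and to invoke the general machinery already established (Claims~\ref{cla:exist and unique}, \ref{cla:comp invertible}, Theorems~\ref{thm: limit of compact inverses}, \ref{thm:projection applied after operator}) wherever possible rather than arguing from scratch.

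For part~1, the key observation is that $\tanh:\RR\to(-1,1)$ is a strictly increasing bijection that is odd, hence a norm-monotone bijection in the one-dimensional sense, and its entrywise application $\sigma:\RR^m\to(-1,1)^m$ is a bijection with inverse $\arctanh$ (entrywise). Since $A$ has full rank $m\le n$ and $A(V)=W$, the operator $A$ has the Moore-Penrose pseudo-inverse $\finv{A}$, which by Claim~\ref{cla:generalizing MP} is its pseudo-inverse in the sense of Definition~\ref{def:generalized pseudo-inv}; because $A$ is onto, $\finv{A}$ is a right inverse and $\finv{A}(w)$ is the minimum-norm solution of $Av=w$. I would then write $T=\sigma\circ A$ with $\sigma$ a bijection, and argue via Claim~\ref{cla:comp invertible} (with $S_1$ the identity and $S_2$ absent, or more directly): for $w\in(-1,1)^m$, the equation $T(v)=w$ is equivalent to $Av=\arctanh(w)$, so the set of exact solutions is nonempty and $\finv{A}(\arctanh(w))$ is its unique minimum-norm element; for $w\notin(-1,1)^m$ the image $T(V)=(-1,1)^m$ is the open cube, which is convex but not closed, so I must check that the domain of the pseudo-inverse is exactly $(-1,1)^m$ — a point $w$ on or outside the boundary has no closest point in the open cube, so $A_w$ is not well-defined and by Lemma~\ref{lem:inverse on the image} no pseudo-inverse value exists there. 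Uniqueness and MP2 on $(-1,1)^m$ follow since $\finv{A}(\arctanh(w))\in T^{-1}(\{w\})$ and BAS pins the value down (Lemma~\ref{lem:inverse on the image}).

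For part~2, $C_k$ is a closed convex subset of $W=\RR^m$, hence Chebyshev, and $C_k\subseteq(-1,1)^m=T(V)$, so Theorem~\ref{thm:projection applied after operator} applies directly to $T_k=P_{C_k}T$: its part~1 gives the recursive pseudo-inverse on $W'$, and to get $W'=W$ (defined on all of $W$) I would invoke part~2 of that theorem, checking its hypotheses — $T$ continuous (it is, being $\sigma$ composed with a linear map), $W$ Hilbert, and closed balls in $V=\RR^n$ compact (Heine-Borel). The convergence $\lim_{k\to\infty}T_k=T$ is pointwise: for fixed $v$, $T(v)=\sigma(Av)\in(-1,1)^m$ lies in the interior, so for $k$ large enough $T(v)\in C_k$ and $P_{C_k}T(v)=T(v)$. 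I would state this as pointwise convergence and not overclaim uniform convergence.

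For part~3, with $\sigma=\mathrm{ReLU}$ entrywise, $\sigma$ is the projection $P_C$ onto the closed convex cone $C=[0,\infty)^m$, which contains $0$ and is contained in $T(V)=W$ (since $A$ is onto and ReLU maps onto $[0,\infty)^m$). Then $T=\sigma A=P_C A$ is a left-composition with a projection, so Theorem~\ref{thm:projection applied after operator} again gives the recursive structure: for $w\notin[0,\infty)^m$, $\finv{T}(w)=\finv{T}(P_C(w))=\finv{T}(\sigma(w))$, and for $w\in[0,\infty)^m$, $\finv{T}(w)$ must lie in $B_w=\arg\min\{\|v\|:P_CAv=w\}$. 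The remaining task is to identify $B_w$ concretely: $P_C(Av)=w$ with $w\in[0,\infty)^m$ means exactly that $(Av)_i=w_i$ on coordinates where $w_i>0$ and $(Av)_i\le 0$ on coordinates where $w_i=0$ — this is precisely the feasibility region of the stated quadratic program, and minimizing $\|v\|^2$ over it picks the BAS value. For defined-ness I would note the feasible set is nonempty (since $A$ is onto, pick $v_0$ with $Av_0$ having the required positive coordinates and arbitrary nonpositive ones, or just any preimage of the point with those coordinates and all others set to, say, $-1$), closed and, intersected with a ball, compact, so the minimum is attained; MP2 comes for free from the recursive definition exactly as in Theorem~\ref{thm:projection applied after operator}.

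The main obstacle I anticipate is the careful bookkeeping in part~1 about the \emph{domain} of the pseudo-inverse — establishing that it genuinely cannot be extended beyond the open cube $(-1,1)^m$ because the image is not closed, which is a genuine feature (and a point of contrast with parts~2 and~3, where the projection "closes up" the image and the inverse extends to all of $W$). The rest is mostly a matter of correctly matching hypotheses to the already-proved theorems.
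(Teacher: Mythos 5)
Your proposal is correct and follows essentially the same route as the paper: part~1 via Lemma~\ref{lem:inverse on the image} after reducing $T(v)=w$ to $Av=\arctanh(w)$ (with the same observation that the open cube's non-closedness blocks extension beyond $T(V)$), and parts~2 and~3 via both parts of Theorem~\ref{thm:projection applied after operator} with the explicit identification of $B_w$ as the feasible set of the stated program. The only caveat is your parenthetical appeal to Claim~\ref{cla:comp invertible} in part~1, which does not actually apply ($\tanh$ is not a scalar multiple of an isometry, so its hypothesis on $S_1$ fails), but the direct argument you give alongside it is the one that carries the proof and is exactly the paper's.
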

\begin{proof}
\begin{enumerate}
    \item
    For $\sigma = \tanh$, $\sigma: W\rightarrow (-1,1)^m$ is a bijection, and $T(V)=(-1,1)^m$. Using Lemma~\ref{lem:inverse on the image}, $\finv{T}:T(V)\rightarrow V$ defined as $\finv{T}(w) = \finv{A}(\arctanh (w))$ is the unique valid pseudo-inverse.
    
    For $w\notin (-1,1)^m$ there is no nearest point in $T(V)$, so the pseudo-inverse cannot be defined.
    
    \item
    Note that for every $k>1$, $C_k$ is nonempty, closed, and convex, so it is a Chebyshev set. It is clear that $\lim_{k\rightarrow\infty} T_k=T$ in a pointwise sense, and thus $T_k$ may be seen as an approximation for $T$ for a large enough $k$. Note that $T$ is continuous, $C_k \subseteq T(V)$, $W$ is a Hilbert space, and every closed ball in $V$ is compact, so we may apply both parts of Theorem~\ref{thm:projection applied after operator} and define a valid pseudo-inverse $\finv{T}_k$ by
\begin{equation}
 \hspace{-0.7pt}\finv{T}_k(w) = \begin{cases}
      \textup{some}\;v\in B_w,\hspace{-6pt}& w\in C_k\\
     \finv{T}_k(P_{C_k}(w)), \hspace{-6pt}& w\in W\setminus C_k\;,
    \end{cases}
\end{equation}    
\ignore{
    
}

where $B_w=\arg\min\{\|v\|: v\in V,\,P_{C_k}T(v)=w\}$ is guaranteed to be well-defined for every $w\in C_k$. Thus, for $w\in W\setminus C_k$ we have $\finv{T}_k(w)=\finv{T}_k(P_{C_k}(w))$, and for $w\in C_k$, $\finv{T}_k(w)$ is a solution of the optimization problem
\begin{equation}
\hspace{-0.7pt}
\begin{aligned}
\textrm{min} \quad & \|v\|\quad \textrm{s.t.}&\hspace{-17pt}\\
 (Av)_i &= \arctanh(w_i)\;&\hspace{-17pt}\forall i\;| w_i|< 1-\frac{1}{k}\\
 (Av)_i&\leq \arctanh(\frac{1}{k}-1)\;&\hspace{-17pt}\forall i\;w_i= \frac{1}{k}-1 \\
 (Av)_i&\geq \arctanh(1-\frac{1}{k})\;&\hspace{-17pt}\forall i\;w_i= 1-\frac{1}{k}
\end{aligned}
\end{equation}
\ignore{

}
\ignore{

}
Note that it is equivalent to minimize $\|v\|^2$ instead of $\|v\|$. We further comment that this feasible constrained convex optimization problem has a unique solution.%
    
\item
For $\sigma=\textup{ReLU}$, $\sigma$ itself is a metric projection onto $[0,\infty)$, and when applied entrywise to $w\in W$, it projects $w$ onto $[0,\infty)^m$. Our assumption that $A(V)=W$ means that $T(V)=[0,\infty)^m$, namely, the image is nonempty, closed, and convex, and thus a Chebyshev set.
Again using Theorem~\ref{thm:projection applied after operator}, we can define a valid pseudo-inverse $\finv{T}:W\rightarrow V$, where for $w\in W\setminus [0,\infty)^m$ we have $\finv{T}(w)=\finv{T}(\sigma(w))$, and for $w\in [0,\infty)^m$, $\finv{T}(w)$ is a solution to the optimization problem
\begin{equation}
\begin{aligned}
\min \quad & \|v\|&\\
\textrm{s.t.} \quad & (Av)_i = w_i\;&\forall i\; w_i>0\\
& (Av)_i\leq 0\;&\forall i\;w_i=0
\end{aligned}
\end{equation}
which has a unique solution, as commented in part~2.%
\end{enumerate}
\end{proof}

\subsection{Wavelet Thresholding}
Let $V=W=\RR^n$, let $A$ be an $n$ by $n$ matrix representing the elements of an orthonormal wavelet basis, and let $\sigma:\RR\rightarrow\RR$ be some thresholding operator. Wavelet thresholding \citep{donoho1995adapting} takes as input a vector $v\in \RR^n$, which represents a possibly noisy image. The image is transformed into its wavelet representation $Av\in W$, and then the thresholding operator $\sigma$ is applied entrywise to $Av$. The result will be written as $\sigma(Av)$ or $(\sigma A)(v)$. Finally, an inverse transform is applied, and the result, $A^{-1}\sigma(Av)$, is the denoised image. For the thresholding operator, we will be particularly interested in hard thresholding, $\xi_a(x) = x\cdot\Ind{|x|\geq a}$, and soft thresholding, $\eta_a(x) = \sgn(x)\max\{|x|-a,0\}$, where $a\in\RR^+$ in both cases.

\begin{claim}\label{cla:wavelet hard th}
Let $T:V\rightarrow W$, where $T=\sigma A$.
\begin{enumerate}
    \item 
     If $\sigma$ has a pseudo-inverse $\finv{\sigma}:\RR\rightarrow\RR$, then $\finv{T}=A^{-1}\finv{\sigma}$ is a pseudo-inverse of $T$, where all the pseudo-inverses are w.r.t. the $L_2$ norm.
    \item
    If $\sigma = \xi_a$ for any $a\geq 0$, then $\finv{T}T = A^{-1}\sigma A$. Namely, wavelet hard thresholding is equivalent to applying $T$ and then its pseudo-inverse. In contrast, for $\sigma = \eta_a$ 
    (soft thresholding), $\finv{T}T = A^{-1}\sigma A$ does not hold for $a>0$.
\end{enumerate}
\end{claim}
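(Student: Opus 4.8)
The strategy is to reduce everything to the scalar function $\sigma$, using that $A$ is orthonormal (hence an $L_2$-isometry together with $A^{-1}$) and the composition-with-bijections result. For part~1, write $\sigma$ also for its entrywise action on $\RR^n$; by the first remark following Claim~\ref{cla:product operator}, this entrywise operator has $L_2$-pseudo-inverse equal to the entrywise application of $\finv{\sigma}$, which I again denote $\finv{\sigma}$. Since $A$ is orthonormal, $A$ is a bijection of $\RR^n$ onto itself and $A^{-1}$ is an $L_2$-isometry, in particular norm-monotone. Taking $S_1=\mathrm{Id}_W$ (an isometry, with the scalar $1$) and $S_2=A$ in Claim~\ref{cla:comp invertible}(2) then shows that $S_2^{-1}\finv{\sigma}S_1^{-1}=A^{-1}\finv{\sigma}$ is a pseudo-inverse of $S_1\sigma S_2=\sigma A=T$. (Alternatively one checks BAS and MP2 directly: the substitution $u=Av$ is norm-preserving and onto $\RR^n$, so the two nested minimizations defining BAS for $T$ at $w$ become exactly those defining $\finv{\sigma}(w)$, and MP2 reduces to $\finv{\sigma}\sigma\finv{\sigma}=\finv{\sigma}$.)

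For part~2 with $\sigma=\xi_a$, take $\finv{T}=A^{-1}\finv{\sigma}$ as in part~1, so $\finv{T}T=A^{-1}\finv{\sigma}\sigma A$; since $A$ is invertible it suffices to show $\finv{\xi_a}\circ\xi_a=\xi_a$ as scalar functions (equivalently, entrywise). From Table~\ref{tab:pinv1d}, $\finv{\xi_a}(w)=\sgn(w)\,\Ind{|w|>a/2}\,\max\{a,|w|\}$. If $a=0$ then $\xi_0$ is the identity and the claim is immediate, so let $a>0$ and fix $x\in\RR$. If $|x|\ge a$, then $\xi_a(x)=x$, and since $|x|\ge a>a/2$ and $\max\{a,|x|\}=|x|$ we get $\finv{\xi_a}(x)=\sgn(x)|x|=x=\xi_a(x)$; if $|x|<a$, then $\xi_a(x)=0$ and $\finv{\xi_a}(0)=0=\xi_a(x)$. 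Hence $\finv{\xi_a}\circ\xi_a=\xi_a$, so $\finv{T}T=A^{-1}\sigma A$, which is exactly the wavelet hard-thresholding operator.

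For $\sigma=\eta_a$, Table~\ref{tab:pinv1d} gives $\finv{\eta_a}(w)=\sgn(w)(|w|+a)$. Fix $a>0$ and $x$ with $|x|>a$; then $\eta_a(x)=\sgn(x)(|x|-a)$ has sign $\sgn(x)$ and modulus $|x|-a>0$, so $\finv{\eta_a}(\eta_a(x))=\sgn(x)\big((|x|-a)+a\big)=x\neq\sgn(x)(|x|-a)=\eta_a(x)$. Thus the entrywise maps $\finv{\eta_a}\circ\eta_a$ and $\eta_a$ already disagree on vectors supported in one coordinate, and composing with the invertible $A$ gives $\finv{T}T=A^{-1}\finv{\eta_a}\eta_a A\neq A^{-1}\eta_a A$, so the hard-thresholding identity fails for soft thresholding when $a>0$. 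I do not expect a substantive obstacle: the argument is bookkeeping, the only delicate points being the verification of the hypotheses of Claim~\ref{cla:comp invertible}(2) (immediate, since $A$ orthonormal forces $A^{-1}$ to be an isometry) and the scalar case-splits for $\xi_a$ and $\eta_a$ at the boundary values $|x|=a$, $x=0$, and the degenerate case $a=0$. The real content is the contrast the claim emphasizes: $\finv{\xi_a}$ leaves $\xi_a(x)=x$ intact on $\{|x|\ge a\}$, so $\finv{\xi_a}\xi_a=\xi_a$, whereas $\finv{\eta_a}$ adds back the shrinkage constant $a$, so $\finv{\eta_a}\eta_a$ is the identity off $[-a,a]$ rather than $\eta_a$.
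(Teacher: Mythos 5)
Your proposal is correct and follows essentially the same route as the paper: part~1 via the entrywise pseudo-inverse remark plus Claim~\ref{cla:comp invertible}(2) with $S_1=I$ and $S_2=A$ (using that $A^{-1}$ is an $L_2$ isometry, hence norm-monotone), and part~2 by computing $\finv{\sigma}\sigma$ for $\xi_a$ and $\eta_a$ and transferring the (in)equality through the invertible $A$. The only difference is that you write out the scalar case-splits explicitly, which the paper delegates to its appendix derivations of $\finv{\xi_a}$ and $\finv{\eta_a}$.
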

\begin{proof}
\begin{enumerate}
    \item 
    Note first that the vector, or entrywise, view of $\sigma$, $\sigma:W\rightarrow W$, has a pseudo-inverse $\finv{\sigma}:W\rightarrow W$, which is exactly the entrywise application of $\finv{\sigma}$. The first part of the claim follows directly from applying part~2 of Claim~\ref{cla:comp invertible} to $\sigma A$. Note that $A^{-1}$ is norm-preserving under $L_2$ as an orthogonal matrix, and hence norm-monotone, and we take the identity operator as an isometry. 
    \item 
    From part~1, $\finv{T}T = A^{-1}\finv{\sigma}\sigma A$. The pseudo-inverse for both $\xi_a$ and $\eta_a$ is known (see Appendix~\ref{sec:1d pseudo inverse proofs}), so we can directly determine $\finv{\sigma}\sigma$. For $\sigma = \xi_a$, $\finv{\sigma}\sigma = \sigma$, so $\finv{T}T = A^{-1}\sigma A$. For $\sigma = \eta_a$ with $a>0$, however, $\finv{\sigma}\sigma(x) = x\cdot\Ind{|x|>a}\neq\sigma(x)$, so $\finv{T}T=A^{-1}\finv{\sigma}\sigma A\neq A^{-1}\sigma A$. The last inequality holds since there is some $u$ s.t. $\finv{\sigma}\sigma(u)\neq\sigma(u)$, so $\finv{\sigma}\sigma( A(A^{-1}u))\neq\sigma(A(A^{-1}u))$, and further applying the injective $A^{-1}$ to both sides keeps the inequality.
\end{enumerate}
\end{proof}

We note that denoising in general is ideally idempotent, and so is $\finv{T} T$ for any $T$, since $\finv{T} T \finv{T} T = \finv{T} T$ by MP2. 

\section{The Special Case of Endofunctions}
In the next three sections, we focus on the particular case where the domain and range of an operator are equal, namely, $T:V\rightarrow V$. Importantly, for such operators we may compose an operator with itself, and if $V$ is a vector space, even discuss general polynomials of an operator. We may of course apply our previous results to this particular setting, but will be able to introduce new and interesting types of inverses as well.

Let $V$ be a nonempty set. We denote by $OP(V)$ the set of all operators from $V$ to itself. The set $OP(V)$ is equipped with an associative binary operation (composition), making it a semigroup. Furthermore, it has a two-sided identity element, the identity operator $I$, making it a monoid. An element $T\in OP(V)$ has a two-sided inverse element $T^{-1}$ iff it is bijective. The composition operation is generally not commutative.   Integer powers of an operator are well-defined by induction: $T^n=T\circ T^{n-1}$, $T^1 = T$, and $T^0 = I$. If $T$ is bijective, then $T^{-n}$ is defined similarly using $T^{-1}$.

If $V$ is also a vector space over a field $F$,\footnote{We assume that the field is nontrivial, that is, $F\neq \{0\}$, and hence $0\neq 1$. }
then addition and scalar multiplication are defined in $OP(V)$ in a natural way: for every two operators $T_1$ and $T_2$, $(T_1+T_2)(v) = T_1(v)+T_2(v)$ for every $v\in V$, and for every $a\in F$, $(aT)(v) = aT(v)$. We denote by $0\in OP(V)$ the operator that maps all vectors to $0\in V$. With respect to these operations of addition and scalar multiplication, it is easy to see that $OP(V)$ is a vector space over $F$. Namely, the operation $+$ is commutative, associative, has an identity element ($0$), and has an inverse, $-T$ for every $T\in OP(V)$, and it holds that $a(T_1+T_2) = aT_1+aT_2$, $(a_1+a_2)T = a_1T+a_2T$, $(a_1a_2)T = a_1(a_2T)$, and $1T = T$.

Some additional properties are satisfied when $V$ is a vector space:
\begin{enumerate}
\item
  For every $a\in F$, $(aT_1)\circ T_2 = a(T_1\circ T_2)$.
\item
  Right distributivity: $(T_1+T_2)\circ T_3 = T_1\circ T_3+ T_2\circ T_3$. Importantly, left distributivity does \textit{not} generally hold: $T_1\circ (T_2+T_3) \neq T_1\circ T_2+ T_1\circ T_3$.
\item
  It holds that $0 \circ T = 0$, and if $T(0)=0$, then $T\circ 0 = 0$.
  \item
  For every polynomial $p\in F[x]$, $p(x) = \sum_{i=0}^m a_ix^i$, the operator $p(T)$ is well-defined, as satisfying $p(T)(v) = \sum_{i=0}^m a_i T^i(v)$ for every $v\in V$.
\end{enumerate}
The combined properties in the case where $V$ is a vector space make $OP(V)$ more than a right near-ring, not quite a near field, and seemingly not a standard algebraic structure. In particular, it is not an algebra or a ring, because it is not left distributive.

\subsection{A Motivating Example: Square Matrices}
Let $V$ be an $n$-dimensional vector space over a field $F$, and let $T$ be an $n\times n$ matrix over $F$ (equivalently, a linear transformation from $V$ to $V$). The \textit{characteristic polynomial} of $T$, $p_T(x)=\det(xI-T)$, is a degree $n$ monic polynomial whose constant term is $(-1)^n \det(T)$, and whose roots are the eigenvalues of $T$. The Cayley-Hamilton theorem states that $p_T$ \textit{vanishes in $T$}, namely, $p_T(T)=0$.
Writing $p_T(x)=\sum_{k=0}^n a_kx^k$, we thus have $\sum_{k=0}^n a_kT^k=0$, and rearranging, $T(\sum_{k=1}^n a_kT^{k-1})=(-1)^{n+1}\det(T) I$. If $T$ is invertible, we can apply $T^{-1}$ to both sides and divide by $(-1)^{n+1}\det(T)$, yielding
\begin{equation}
    T^{-1} = (-1)^{n+1} \det(T)^{-1}\sum_{k=1}^n a_kT^{k-1}\;.
\end{equation}
Namely, the inverse of an invertible matrix is a polynomial expression in that matrix. This form is convenient both theoretically and computationally, and one may ask whether such a result might be available for generalized inverses as well.

For the MP inverse the answer is in general negative. Specifically, for an $n\times n$ matrix $T$ over $F=\CC$, $T^\dagger$ is expressible as a polynomial in $T$ iff $T$ and $T^\dagger$ commute \citep{pearl1966generalized}.\footnote{Equivalently, the matrix $T$ is \textit{range-Hermitian}, that is, with equal images for $T$ and $T^*$.
}
Even for the more general case of a $\{1,2\}$-inverse, a polynomial expression does not necessarily exist. As shown by Englefield \citep{englefield1966commuting}, an $n\times n$ matrix $T$ over $\CC$ has a $\{1,2\}$-inverse expressible as a polynomial in $T$ iff $\rank{T}=\rank{T^2}$. 
One might even produce matrices $T$ for which a $\{1\}$-inverse cannot be polynomial in $T$, as follows.
\begin{claim}\label{cla:no poly pseudoinverse}\label{cla:1 inverse for nilpotent}
Let $T\neq 0$ be a square matrix over a field $F$ s.t. $T^k=0$ for some $k> 1$. If $X$ is a $\{1\}$-inverse of $T$, then it is not equal to any polynomial in $T$.
\end{claim}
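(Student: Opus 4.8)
The plan is a proof by contradiction. Suppose $X$ is a $\{1\}$-inverse of $T$, so that $TXT=T$, and suppose in addition that $X=p(T)$ for some $p\in F[x]$; I will show this forces $T=0$, contradicting the hypothesis $T\neq 0$.

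The first step is to reduce the $\{1\}$-inverse relation to a single polynomial identity in $T$. Since $X=p(T)$ is a polynomial in $T$, it commutes with $T$, so $T=TXT=T\,p(T)\,T=T^2p(T)$. Writing $q:=p$, the only fact I use from here on is $T=T^2q(T)$.

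The second step exploits nilpotency. Rewrite $T=T^2q(T)$ as $T\bigl(I-Tq(T)\bigr)=0$. The operator $N:=Tq(T)$ is a polynomial in $T$ whose constant term vanishes, and since $T^k=0$ we get $N^k=T^kq(T)^k=0$, so $N$ is nilpotent; hence $I-N$ is invertible, with inverse $\sum_{j=0}^{k-1}N^j$. Multiplying $T(I-N)=0$ on the right by $(I-N)^{-1}$ yields $T=0$, the desired contradiction. Equivalently, and without invoking invertibility, one can iterate $T=T^2q(T)$, replacing the leftmost factor $T$ in $T\cdot T^mq(T)^m$ by $T^2q(T)$, to obtain $T=T^{m+1}q(T)^m$ for every $m\ge 0$, and then take $m\ge k-1$ so that $T^{m+1}=0$.

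I do not expect a genuine obstacle here: the entire argument rests on the single observation that assuming $X$ is polynomial in $T$ collapses $TXT=T$ to $T=T^2q(T)$, after which the fact that ``identity minus a nilpotent operator is invertible'' (or a short iteration together with $T^k=0$) finishes immediately. A minor point worth recording is that the assumption $k>1$ is exactly what keeps the claim non-vacuous, since $T^1=0$ would mean $T=0$ outright.
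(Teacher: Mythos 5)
Your proof is correct and follows essentially the same route as the paper's: assume $X=p(T)$, use the fact that a polynomial in $T$ commutes with $T$ together with $T^k=0$ to force $T=0$, contradicting $T\neq 0$. The only difference is bookkeeping: the paper first shows $(XT)^k=XT$ via MP1 and then collapses $XT=X^kT^k=0$ before concluding $T=TXT=0$, whereas you substitute immediately to get $T=T^2q(T)$ and finish either by iterating to $T=T^{m+1}q(T)^m$ or by inverting $I-Tq(T)$; both variants are valid and equally short.
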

\begin{proof}
Assume by way of contradiction that $X=p(T)$ for some polynomial $p \in F[x]$. By MP1, $(XT)^{k} = (XT)^{k-2}XTXT=(XT)^{k-1}$, and by induction, $(XT)^k=XT$. Since $X=p(T)$ and $T$ commute, $XT=(XT)^k=X^kT^k=0$. 
Again by MP1, this implies that $T=TXT=0$, which is impossible.  
\end{proof}
 One such matrix is the $n\times n$ left-shift matrix defined as $LS^{(n)}_{i,j}=\Ind{i=j+1}$ for $1\leq i,j\leq n$, where we require $n>1$. 
 
 This apparent shortcoming of $\{1\}$-inverses may be remedied by a different type of generalized inverse, the Drazin inverse, which is described next. Introduced by Drazin \citep{drazin1958pseudo} in the very general context of semigroups, it will be handy in the context of the semigroup $OP(V)$.

\subsection{The Drazin Inverse for Semigroups}
Let $\mathcal{S}$ be a semigroup, and let $x\in \mathcal{S}$. A \textit{Drazin inverse} of the element $x$, denoted $\drinv{x}$, is a $\{1^k,2,5\}$-inverse in the sense of satisfying the requirements:
\begin{align*}
&\textup{MP1$^k$: } x^k\drinv{x} x = x^k \\
&\textup{MP2: } \drinv{x} x \drinv{x} = \drinv{x} \\
&\textup{D5: } x\drinv{x} = \drinv{x} x\;,
\end{align*}
where MP1$^k$ is a variant of MP1 for some integer $k\geq 1$.\footnote{the original requirements in Drazin's work are slightly different, but equivalent.} 

The theorem below summarizes most of the main properties proved by Drazin for this generalized inverse. 
\begin{theorem}[\citep{drazin1958pseudo}]\label{thm:drazin}
Let $\mathcal{S}$ be a semigroup, and let $x\in\mathcal{S}$.
\begin{enumerate}
\item 
If $\drinv{x}$ exists, then it is unique and commutes with any element $y\in\mathcal{S}$ that commutes with $x$.
\item
If $\mathcal{S}$ has a unit and $x$ has an inverse $x^{-1}$, then $\drinv{x}=x^{-1}$.
\item 
If $\drinv{x}$ exists, then there is a unique positive integer, the index of $x$, $\drind{x}$, s.t. $x^{m+1}\drinv{x}=x^m$ for every $m\geq \drind{x}$, but for no $m<\drind{x}$.
\item 
If $\drinv{x}$ exists, and $k$ is a positive integer, then $\drinv{(x^k)}=(\drinv{x})^k$, and $\drind{x^k}$ is the unique positive integer $q$ that satisfies $0\leq kq - \drind{x}<k$.
\item 
If $\drinv{x}$ exists, then $\drind{\drinv{x}}=1$ and $\drinvdrinv{x} = x^2\drinv{x}$.
\item 
It holds that $\drinvdrinv{x}=x$ iff $x$ is Drazin-invertible with index $1$.
\item 
If $x$ is Drazin-invertible, then $\drinvdrinv{(x^k)}=x^k$ for every integer $k\geq\drind{x}$.
\item 
The element $x$ is Drazin-invertible iff there exist $a,b\in\mathcal{S}$ and positive integers $p$ and $q$ s.t. $x^p=x^{p+1}a$ and $x^q=bx^{q+1}$. If the condition holds, then $\drinv{x}=x^Ma^{M+1}=b^{M+1}x^M$, where $M=\max\{p,q\}$, and $\drind{x}\leq \min\{p,q\}$.
\item 
If $\mathcal{S}$ has a (two-sided) zero, and $x^k=0$ for some positive integer $k$, then $\drinv{x}=0$.
\end{enumerate}
\end{theorem}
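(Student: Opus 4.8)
The plan is to extract from the three defining properties (MP1$^{k}$, MP2, and D5) a handful of auxiliary identities and then derive all nine items from them, mostly by substitution. Writing $d=\drinv{x}$ and $e=xd=dx$, I would first record, by short direct computations: (i) $e$ is idempotent and $ed=de=d$; (ii) $x^{k}e=ex^{k}=x^{k}$, equivalently $x^{k+1}d=dx^{k+1}=x^{k}$ (using D5 to commute $x$ past $d$, together with MP1$^{k}$); (iii) $d=x^{k}d^{k+1}=d^{k+1}x^{k}$ and $e=x^{k}d^{k}=x^{k+1}d^{k+1}$. I would also note that MP1$^{k}$ \emph{upgrades}: premultiplying by $x$ gives MP1$^{k'}$ for every $k'\ge k$, so any two Drazin inverses may be taken to share one exponent $k$.

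For item 1, uniqueness: for $d_{1},d_{2}$ with common exponent $k$, use $d_{1}=d_{1}^{k+1}x^{k}$ and $x^{k}=x^{k+1}d_{2}$ to get $d_{1}=(d_{1}x)^{k+1}d_{2}=e_{1}d_{2}$, and symmetrically $d_{1}=d_{2}e_{1}$, $d_{2}=e_{2}d_{1}=d_{1}e_{2}$; substituting, $d_{1}=e_{1}d_{2}=e_{1}d_{1}e_{2}=d_{1}e_{2}=d_{2}$. For item 1, commutation: this is the one assertion that is not a mere substitution. From $yx=xy$ one readily gets $x(yd)=(yd)x$ and the chains $yd=x^{k}yd^{k+1}$, $dy=x^{k}d^{k+1}y$, but matching these needs a further clever step; I would follow Drazin, e.g.\ by showing $x$ has a Drazin inverse lying in the centralizer of $y$, which by uniqueness must then equal $d$. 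I expect this to be the main obstacle. Items 2 and 9 then follow at once from uniqueness: if $x^{-1}$ exists it already satisfies MP1$^{k}$, MP2 and D5, so $\drinv{x}=x^{-1}$; and if $x^{k}=0$ and $\mathcal{S}$ has a zero, then $0$ satisfies all three, so $\drinv{x}=0$.

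For the remaining items the recipe is: check the three axioms for the relevant element, invoke uniqueness, and count exponents. Item 3: the set $\{m\ge 1:\ x^{m+1}d=x^{m}\}$ is nonempty (contains $k$) and upward closed (multiply by $x$), so its minimum is a well-defined $\drind{x}$, and via the rewriting $x^{m+1}d=x^{m}dx$ this is exactly the set of $m$ satisfying MP1$^{m}$. Item 4: since $x$ and $d$ commute, $d^{k}$ together with $x^{k}$ satisfies the axioms, giving $\drinv{(x^{k})}=(\drinv{x})^{k}$; and since $x^{m+j}d^{j}=x^{m}$ precisely when $m\ge\drind{x}$, the minimal $q$ with $x^{k(q+1)}d^{k}=x^{kq}$ is the one with $0\le kq-\drind{x}<k$. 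Item 5: one checks directly that $x^{2}d$ is a Drazin inverse of $d$ of index $1$ --- e.g.\ $d(x^{2}d)d=(dx^{2}d)d=ed=d$ and $d(x^{2}d)=e=(x^{2}d)d$ --- so $\drind{\drinv{x}}=1$ and $\drinvdrinv{x}=x^{2}d=x^{2}\drinv{x}$. Item 6: $\drinvdrinv{x}=x^{2}\drinv{x}$ equals $x$ iff $x^{2}d=x$ iff $\drind{x}\le 1$. Item 7: by item 6 it suffices that $\drind{x^{k}}=1$, which by item 4 means $0\le k-\drind{x}<k$, i.e.\ $\drind{x}\le k$. Item 8: the forward direction takes $a=b=d$, $p=q=\drind{x}$, using $x^{\drind{x}+1}d=x^{\drind{x}}=dx^{\drind{x}+1}$; the reverse direction --- given $x^{p}=x^{p+1}a$ and $x^{q}=bx^{q+1}$, set $M=\max\{p,q\}$, put $d:=x^{M}a^{M+1}$, and show $d=b^{M+1}x^{M}$ plus the three axioms with exponent $M$ --- is the only genuinely lengthy computation, carried out by iterating the hypotheses ($x^{p}=x^{p+j}a^{j}$, $x^{q}=b^{j}x^{q+j}$) and combining them.

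In short: auxiliary identities and uniqueness first; then every item except the commutation clause of item 1 and the ``if'' half of item 8 reduces to verifying axioms and bookkeeping of exponents. The commutation clause is the step I expect to require real ingenuity rather than routine algebra.
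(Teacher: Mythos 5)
You should first note that the paper itself contains no proof of this theorem: it is quoted verbatim as background, with the citation to Drazin (1958), and is only \emph{used} later (e.g.\ in Theorem~\ref{thm:drazin characterization main} and the loop example). So there is no in-paper argument to compare against; what your proposal reconstructs is essentially Drazin's own development, and in outline it is the right one. Your auxiliary identities (with $d=\drinv{x}$, $e=xd=dx$: $e^2=e$, $ed=de=d$, $x^{k+1}d=dx^{k+1}=x^k$, $d=x^kd^{k+1}=d^{k+1}x^k$), the observation that MP1$^{k}$ upgrades to every larger exponent, and the uniqueness chain $d_1=e_1d_2$, $d_2=d_1e_2$, hence $d_1=e_1d_1e_2=d_1e_2=d_2$, are all correct; and items 2, 3, 4, 5, 6, 7, 9 do reduce, exactly as you say, to exhibiting a candidate, checking MP1$^{k}$/MP2/D5 for it, invoking uniqueness, and bookkeeping the index (your index computations for items 4, 6, 7 check out).

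The two places you leave open are, however, genuine gaps in the proposal as written. For the commutation clause of item~1 you only say you would ``follow Drazin'', and the alternative you float---exhibiting a Drazin inverse inside the centralizer of $y$---is not workable as stated, because you have no independent construction of Drazin inverses to place there. In fact no new idea is needed: from $d=d^{k+1}x^k=x^kd^{k+1}$, $x^k=x^{k+1}d=dx^{k+1}$ and $xy=yx$ one gets $dy=d^{k+1}x^ky=d^{k+1}yx^k=d^{k+1}yx^{k+1}d=d^{k+1}x^{k+1}yd=eyd$, and symmetrically $yd=yx^kd^{k+1}=x^kyd^{k+1}=dx^{k+1}yd^{k+1}=dyx^{k+1}d^{k+1}=dye$; substituting the second identity into the first gives $dy=e(dye)=(ed)ye=dye=yd$. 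That three-line computation closes the clause you flagged as the main obstacle. For the converse half of item~8 your plan (iterate $x^p=x^{p+j}a^j$ and $x^q=b^jx^{q+j}$) is the correct one, but the proposal stops at naming it: one still has to show $x^Ma^{M+1}=b^{M+1}x^{2M+1}a^{M+1}=b^{M+1}x^M$, then verify D5 ($xw=b^Mx^M=wx$ for $w=x^Ma^{M+1}$, which genuinely needs both hypotheses), MP1$^{M}$ ($x^{M+1}w=x^{2M+1}a^{M+1}=x^M$), MP2 ($wxw=b^{M+1}x^{2M+1}a^{M+1}=w$), and the bound $\drind{x}\le\min\{p,q\}$ (e.g.\ $x^{p+1}w=x^{p+M+1}a^{M+1}=x^p$, and similarly with $b$). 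None of these steps is hard, but as submitted they are asserted rather than proved, so the proposal is a correct skeleton of Drazin's argument rather than a complete proof.
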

Since $\drinv{x}$ is unique when it exists, one may refer to \textit{the} Drazin inverse of $x$.

\subsubsection{Application to Matrices}
As shown already by Drazin (\citep{drazin1958pseudo}, Corollary~5), when the semigroup $\mathcal{S}$ is also a finite-dimensional algebra, $\drinv{x}$ always exists and is a polynomial in $x$. This applies in particular to square matrices over a field.

For a matrix $T$, one can give a polynomial expression for the Drazin inverse using the \textit{minimal polynomial} of $T$, $m_T(x)$, which is the unique monic polynomial with the least degree that vanishes in $T$. The minimal polynomial exists for every matrix $T$ and divides every other polynomial that vanishes in $T$. 

To see this, we can write $m_T(x) = cx^l(1-xq(x))$, where $c\neq 0$, $l\geq 0$, and $q$ is a polynomial. Thus, $0=m_T(T)=cT^l-cT^{l+1}q(T)$, and therefore, $T^l=T^{l+1}q(T)$. If $l=0$, then we may multiply both sides by $T$, so in any case, $T^k=T^{k+1}q(T)=q(T)T^{k+1}$, where $k=\max\{l,1\}\geq 1$. By part~8 of Theorem~\ref{thm:drazin}, 
\begin{equation}
    \drinv{T} = T^kq(T)^{k+1}\;.
\end{equation}
For more on the Drazin inverse for matrices, see, e.g., \citep{ben2003generalized}.

 In summary, for any field, there are matrices for which a $\{1\}$-inverse cannot be a polynomial in the matrix, while the Drazin inverse always is, making their values necessarily different. This is true in the particular case of matrices over the complex numbers and the MP inverse. One such example is the $n\times n$ right-shift matrix, defined by $RS^{(n)}_{ij}=\Ind{i+1=j}$ for $1\leq i,j\leq n$, whose MP inverse (and conjugate transpose) is the already mentioned left-shift matrix $LS^{(n)}$. By Claim~\ref{cla:1 inverse for nilpotent} (or simple observation), $LS^{(n)}$ cannot be a polynomial in $RS^{(n)}$ for $n>1$. The Drazin inverse of $RS^{(n)}$ equals $0$, as it is for any nilpotent element (Theorem~\ref{thm:drazin} part~9). 

 \section{Vanishing Polynomials of Nonlinear Operators}
\ignore{
}
\ignore{
}

\ignore{ 
}

 We wish to prove results regarding generalized inverses for nonlinear endofunctions that are in a similar spirit to those shown for matrices. In other words, to express generalized inverses by forward applications of the operator at hand, specifically, using polynomials. 
 
  Inversion is generally hard to compute, certainly for nonlinear operators. Applying a forward operator, on the other hand, is straightforward. This task is directly related to inverse problems and to learning. For example, a neural network for some image manipulation task is a highly complex nonlinear operator $T:V\rightarrow V$. An intriguing question is: \emph{Can we approximate a generalized inversion of the network by a polynomial containing only forward (inference) applications of the network?} This motivates our study here.
 
 A topic that is relevant to our investigation is that of vanishing polynomials of nonlinear operators, which are interesting in their own right, and will be discussed in depth in this section. Generalized inverses that build on the results shown in this section will be defined in the next section.

In this section, unless otherwise said, we assume that $V$ is a vector space over $F$. As already mentioned, for every $p\in F[x]$ and $T\in OP(V)$, the operator $p(T)$ is well defined. The transformation $p\rightarrow p(T)$ is linear, namely, if $p,q \in F[x]$, where $p(x)=\sum_{i=0}^n a_i x^i$, $q(x)=\sum_{i=0}^m b_i x^i$, and $a\in F$, then $(p+q)(T) = p(T) + q(T)$ and $(ap)(T) = ap(T)$. We also have, even without left distributivity, that
    \begin{align}
        (pq)(T) &= \left(\sum_{i=0}^m\sum_{j=0}^na_jb_ix^{i+j}\right)(T)\\
        &= \sum_{i=0}^m\sum_{j=0}^na_jb_iT^{i+j} \\
        &=\sum_{i=0}^mb_ip(T)\circ T^i\;.
    \end{align}

The set of \textit{homogeneous operators}, namely, operators with $0$ as a fixed point, form a vector subspace of $OP(V)$ and are closed under composition. Consequently, for any $p\in F[x]$ and homogeneous $T$, $p(T)$ is homogeneous.

\begin{definition}[Vanishing polynomial]
A polynomial $p\in F[x]$ is a vanishing polynomial of $T\in OP(V)$ if $p\neq 0$ and $p(T)=0$.\footnote{Not to be confused with a vanishing polynomial of a ring, which evaluates to zero on every element of the ring.}
\end{definition}
We will sometimes say that a polynomial vanishes in or for an operator in the same sense.
We may already highlight some elementary properties of vanishing polynomials. 
\begin{claim}\label{cla:elem prop vanishing}
Let $T\in OP(V)$, $p\in F[x]$ vanishing in $T$, $p(x) = \sum_{i=0}^m a_ix^i$.
\begin{enumerate}
\item 
If $a_0\neq 0$, then $T$ is injective (equivalently, left invertible), and a left inverse is given by $S=-a_0^{-1}\sum_{i=1}^m a_iT^{i-1}$. If $T$ is also surjective, then it is a bijection, and $S=T^{-1}$.
\item 
 For any $q\in F[x]$, $(pq)(T)(v)=0$ for every $v\in V$, and if $q\neq 0$, then $pq$ is vanishing in $T$.
 \item 
 If $p_i$ is vanishing in $T_i$, $i=1,\ldots,k$, then $\Pi_{i=1}^k p_i$ is a common vanishing polynomial of $\{T_i\}_i$.
 \item 
For every $p_1\in F[x]$, $p_1(T) = r(T)$ for $r\in F[x]$, where either $r=0$ or $deg(r)<deg(p)$.
\item
For any invertible linear $P:V\rightarrow V$, $p$ is vanishing in $P^{-1}TP$.
\end{enumerate}
\end{claim}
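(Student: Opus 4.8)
The plan is to treat the five parts essentially independently, relying throughout on the linearity of the evaluation map $q\mapsto q(T)$ and on the multiplication identity $(pq)(T)=\sum_{i} b_i\,p(T)\circ T^i$ (with $q(x)=\sum_i b_i x^i$) established just above the claim, together with the elementary properties of $OP(V)$ listed there. For part~1, I would read the operator equation $p(T)=0$ pointwise: for every $v\in V$, $a_0 v+\sum_{i=1}^m a_i T^i(v)=0$, hence $v=-a_0^{-1}\sum_{i=1}^m a_i T^{i-1}\big(T(v)\big)=S\big(T(v)\big)$ with $S=-a_0^{-1}\sum_{i=1}^m a_i T^{i-1}$. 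The rewriting $\big(\sum_{i=1}^m a_i T^{i-1}\big)\circ T=\sum_{i=1}^m a_i T^i$ uses right distributivity and the identity $(aT_1)\circ T_2=a(T_1\circ T_2)$, both valid in $OP(V)$. Thus $S\circ T=I$, so $T$ is injective, i.e.\ left invertible with left inverse $S$; if $T$ is in addition surjective it is a bijection, and a left inverse of a bijection coincides with its two-sided inverse, so $S=T^{-1}$.

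For part~2, since $p(T)=0$ and $0\circ T^i=0$, the identity $(pq)(T)=\sum_i b_i\,p(T)\circ T^i$ gives $(pq)(T)(v)=0$ for every $v\in V$; and if $q\neq0$ then $pq\neq0$, because $F[x]$ is an integral domain, so $pq$ is a vanishing polynomial of $T$. Part~3 is then an immediate consequence: for each $j$ write $\prod_{i=1}^k p_i=p_j\cdot\prod_{i\neq j}p_i$; since $p_j$ vanishes in $T_j$ and $\prod_{i\neq j}p_i\neq0$, part~2 shows $\prod_{i=1}^k p_i$ vanishes in $T_j$, and this holds for every $j$ while $\prod_{i=1}^k p_i\neq 0$.

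For part~4, I would apply the division algorithm in $F[x]$ (legitimate since $p\neq0$ and $F$ is a field) to write $p_1=ps+r$ with $r=0$ or $deg(r)<deg(p)$; then linearity of $q\mapsto q(T)$ together with part~2 gives $p_1(T)=(ps)(T)+r(T)=0+r(T)=r(T)$. For part~5, the key observation is $(P^{-1}TP)^i=P^{-1}T^i P$ for all $i\geq0$ (by induction, telescoping $PP^{-1}=I$), so $p(P^{-1}TP)=\sum_i a_i P^{-1}T^i P=P^{-1}\circ p(T)\circ P=P^{-1}\circ 0\circ P=0$, and $p\neq0$ is part of the hypothesis. The one genuinely delicate point — and the main obstacle in an otherwise routine claim — is factoring $P^{-1}$ out of the sum $\sum_i a_i P^{-1}T^i P$: this is an instance of left distributivity, which fails in $OP(V)$ in general, but is permissible here precisely because $P^{-1}$ is linear (additive and homogeneous); similarly $P^{-1}\circ 0=0$ follows from $P^{-1}(0)=0$.
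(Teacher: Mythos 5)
Your proposal is correct and follows essentially the same route as the paper's proof: part~1 via $S\circ T=-a_0^{-1}(p(T)-a_0I)=I$ using right distributivity, part~2 via the identity $(pq)(T)=\sum_i b_i\,p(T)\circ T^i$, part~3 as an immediate consequence, part~4 by division with remainder, and part~5 by conjugation, pulling $P^{-1}$ out of the sum thanks to its linearity (the same delicate point the paper's computation relies on). No gaps.
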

\begin{proof} 
\begin{enumerate}
\item 
By right distributivity, 
\begin{align}
ST&=(-a_0^{-1}\sum_{i=1}^m a_iT^{i-1})\circ T \\
&= -a_0^{-1}(p(T) - a_0 I) = I\;.
\end{align}
    If $T$ is surjective, then $T^{-1}$ exists, and $ST=I$ implies $S=T^{-1}$.
\item
If $q(x)=\sum_{i=0}^l b_ix^i$, then 
\begin{align}
(pq)(T)(v) &=\left(\sum_{i=0}^l b_ip(T)\circ T^i\right)(v)\\
&=\sum_{i=0}^l b_ip(T)(T^i(v))=0\;.
\end{align}
If $q\neq 0$, then $pq\neq 0$, satisfying the conditions for a vanishing polynomial.
\item 
Immediate from the previous part.
\item 
If $p_1=0$ the claim holds, so assume $p_1\neq 0$. Divide $p_1$ by $p$ with remainder, yielding $p_1=pq+r$, where $q,r\in F[x]$ and either $r=0$ or $deg(r)<deg(p)$. Since $(pq)(T)=0$, $p_1(T)=r(T)$.
\item 
For every $v\in V$,
\begin{align}
p(P^{-1}&TP)(v) = \sum_{i=0}^m a_i(P^{-1}TP)^i(v)\\
&=\sum_{i=0}^m a_iP^{-1}T^iP(v)\\
&=P^{-1}\left(\sum_{i=0}^m a_iT^iP(v)\right)\\
&=P^{-1}(0)=0\;.
\end{align}
Note that $T=P(P^{-1}TP)P^{-1}$, so the argument works in the other direction as well.
\end{enumerate}
\end{proof}

\subsection{The Minimal Polynomial}
For a square matrix $T$, there always exists a unique monic vanishing polynomial with minimal degree that divides any vanishing polynomial of $T$, such as the characteristic polynomial, without remainder. We show that such an object also exists for nonlinear operators with a vanishing polynomial.
\begin{theorem}[Minimal polynomial] If $T\in OP(V)$ has a vanishing polynomial, then there exists a unique monic polynomial $p_m\in F[x]$ that is vanishing in $T$ and has minimal degree. Furthermore, for every vanishing polynomial $p$ of $T$, $p_m | p$.
\end{theorem}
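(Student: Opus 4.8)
The plan is to mimic the classical minimal-polynomial argument for matrices, using only the two facts about $OP(V)$ that are already available in this section: the evaluation map $p\mapsto p(T)$ from $F[x]$ to $OP(V)$ is linear, and division with remainder by a vanishing polynomial behaves as in the matrix case (part~4 of Claim~\ref{cla:elem prop vanishing}). The point to keep in mind is that $OP(V)$ is not a ring, so one cannot invoke Euclidean-domain reasoning about $F[x]$ mapped into $OP(V)$ directly; every polynomial manipulation must be carried out in $F[x]$ and transferred back via linearity of evaluation and the division fact.

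First, for existence, I would observe that the set $D=\{\deg(p): p\text{ is vanishing in }T\}$ is a nonempty subset of $\NN$ by hypothesis, so it has a least element $d$. Picking any vanishing polynomial $p$ of degree $d$ with leading coefficient $c\neq 0$, the polynomial $p_m:=c^{-1}p$ is monic of degree $d$, and $p_m(T)=c^{-1}p(T)=0$ by linearity of evaluation; hence $p_m$ is a monic vanishing polynomial of minimal degree.

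Next, for the divisibility claim, let $p$ be any vanishing polynomial of $T$. I would apply part~4 of Claim~\ref{cla:elem prop vanishing} with $p_m$ in the role of the fixed vanishing polynomial: dividing $p$ by $p_m$ gives $p=p_m q+r$ with $r=0$ or $\deg(r)<\deg(p_m)=d$, and the claim yields $p(T)=r(T)$. Since $p(T)=0$, we get $r(T)=0$. If $r\neq 0$, then $r$ would be a vanishing polynomial of degree strictly less than $d$, contradicting the minimality of $d$; therefore $r=0$ and $p_m\mid p$.

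Finally, for uniqueness, suppose $p_m$ and $p_m'$ are both monic vanishing polynomials of degree $d$. Then $q:=p_m-p_m'$ has degree strictly less than $d$ since the leading terms cancel, and $q(T)=p_m(T)-p_m'(T)=0$ by linearity; minimality of $d$ forces $q=0$, i.e. $p_m=p_m'$. I do not expect a genuine obstacle here: the only subtlety is resisting the temptation to argue "inside $OP(V)$" and instead routing all polynomial identities through $F[x]$ together with the linearity of evaluation and part~4 of Claim~\ref{cla:elem prop vanishing}, which was set up precisely to make this transfer legitimate despite the absence of left distributivity.
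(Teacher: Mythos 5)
Your proposal is correct and follows essentially the same route as the paper: take a minimal-degree vanishing polynomial, normalize it to be monic, and use division with remainder (via Claim~\ref{cla:elem prop vanishing}, whose part~4 is itself just part~2 plus division) to force the remainder to vanish. The only cosmetic difference is in the uniqueness step, where you subtract the two monic candidates while the paper deduces $p_m'=hp_m$ with $\deg(h)=0$ from divisibility; both are immediate.
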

\begin{proof}
Let $k$ be the minimal degree for a vanishing polynomial of $T$, and let $p_m$ be such a polynomial, w.l.o.g. monic, $p_m(x) = \sum_{i=0}^k a_ix^i$ with $a_k=1$. Let $p$ be vanishing in $T$, so $p = p_mq + r$, where $q,r\in F[x]$, $q(x) = \sum_{i=0}^l b_ix^i$, and either $r=0$ or $deg(r)<k$. Thus, $p - r =  p_mq$.
We have on the one hand that for every $v\in V$, $(p-r)(T)(v) = p(T)(v) - r(T)(v) = -r(T)(v)$. On the other hand, by Claim~\ref{cla:elem prop vanishing} part~2, $(p_mq)(T)(v)=0$. Therefore, $r(T)(v) = 0$ for every $v$. If $r\neq 0$, then it would be a vanishing polynomial of $T$ with $deg(r)<deg(p_m)$, which is impossible. Therefore $r=0$, and $p_m | p$.

If $p'_m$ is another monic vanishing polynomial of $T$ with minimal degree, then $p'_m = h p_m$, where $h\in F[x]$ and $deg(h)=0$, and since both $p_m$ and $p'_m$ are monic, $h=1$.
\end{proof}

It has already been shown that a nonzero free coefficient of a vanishing polynomial of $T$ implies that $T$ is injective. In the setting of square matrices, the free coefficient of the minimal or characteristic polynomial of $T$ (where its absolute value is $\det(T)$) is nonzero iff the operator is bijective. In that setting, bijective, injective, or surjective are equivalent. The following claim recovers some of these connections using the minimal polynomial. 

\begin{claim}\label{cla:surjective minimial poly}
Let $T\in OP(V)$ have a vanishing polynomial, and let $p_m(x)=\sum_{i=0}^m a_i x^i$ be its minimal polynomial.
If $T$ is surjective, then $a_0\neq 0$ and $T$ is bijective/invertible.
\end{claim}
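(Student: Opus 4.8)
The plan is a proof by contradiction: assume the free coefficient vanishes, $a_0 = 0$, and use surjectivity to strip a factor of $x$ from $p_m$, producing a vanishing polynomial of smaller degree.

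Suppose $a_0 = 0$. Then $p_m(x) = x\,g(x)$, where $g(x) = \sum_{i=1}^{m} a_i x^{i-1} = \sum_{j=0}^{m-1} a_{j+1} x^{j}$. Since $p_m$ is monic of degree $m$ (note $m \geq 1$: a nonzero constant polynomial cannot vanish in $T$ unless $V = \{0\}$, a case in which the statement is trivial), $g$ is monic of degree $m-1$; in particular $g \neq 0$. Applying the multiplication rule for $p \mapsto p(T)$ recorded just before this claim --- with first factor $g$ and second factor $x$, which is legitimate in $OP(V)$ because it uses only right distributivity --- we obtain $p_m(T) = g(T)\circ T$. Since $p_m$ is vanishing in $T$, this gives $g(T)\circ T = 0$.

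Now surjectivity enters. For an arbitrary $v \in V$, choose $u \in V$ with $T(u) = v$; then $g(T)(v) = g(T)(T(u)) = (g(T)\circ T)(u) = 0$. Hence $g(T) = 0$, so $g$ is a vanishing polynomial of $T$ with $deg(g) = m-1 < m = deg(p_m)$, contradicting the minimality of $p_m$. Therefore $a_0 \neq 0$. Finally, by part~1 of Claim~\ref{cla:elem prop vanishing}, $a_0 \neq 0$ implies that $T$ is injective with left inverse $S = -a_0^{-1}\sum_{i=1}^{m} a_i T^{i-1}$; together with the hypothesis that $T$ is surjective, $T$ is a bijection and $S = T^{-1}$.

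The argument has no real obstacle; the only point demanding care is that $OP(V)$ lacks left distributivity, so the factorization $p_m = x\,g$ must be transported to operators through the explicit formula $(pq)(T)=\sum_i b_i\,p(T)\circ T^i$ rather than by naively ``factoring out $T$ on the left'', and one should dispatch the degenerate cases ($m = 0$ or $V = \{0\}$) at the outset.
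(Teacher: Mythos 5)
Your proof is correct and follows essentially the same route as the paper: assume $a_0=0$, use surjectivity to cancel the right factor $T$ from $\bigl(\sum_{i=1}^m a_i T^{i-1}\bigr)\circ T=0$ and obtain a lower-degree vanishing polynomial, contradicting minimality, then invoke part~1 of Claim~\ref{cla:elem prop vanishing} for injectivity. Your extra care in justifying the factorization via the $(pq)(T)$ formula (avoiding left distributivity) and dispatching the degenerate cases is a welcome but inessential refinement of the paper's argument.
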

\begin{proof}
Assume by way of contradiction that $a_0 = 0$. Then for every $v\in V$, $\left(\sum_{i=1}^m a_i T^{i-1}\right)\circ T(v)=0$. Since $T$ is surjective, then for every $u\in V$ it holds that $\left(\sum_{i=1}^m a_i T^{i-1}\right)(u)=0$. We have found a vanishing polynomial of $T$ with degree smaller than that of $p_m$, which is impossible. Thus, $a_0\neq 0$, and by part~1 of Claim~\ref{cla:elem prop vanishing}, $T$ is also injective, and thus bijective. 
\end{proof}

\subsection{Existence of Vanishing Polynomials}
In this subsection we show cases where operators that may be nonlinear have vanishing polynomials. In addition, we derive vanishing polynomials in compound scenarios, such as rational powers of operators and products of operators, where the constituents have vanishing polynomials.

\subsubsection{Examples by Possible Degree}\label{subsubsec:examples by degree}
There are no operators with a vanishing polynomial of degree $0$ (for $V\neq \{0\}$), and those with a degree $1$ polynomial must be of the form $a I$. We may give examples of vanishing polynomials for nonlinear operators with any degree $m>1$. 

For $m=2$, we have all idempotent operators, with the polynomial $x^2-x$, including nonlinear projections and operators of the form $\Ind{v\in A}\cdot v$ for some $A\subset V$. 

For any $m\geq 2$, we can consider $m-1$ strictly nested nonempty closed and convex sets in a Hilbert space to obtain a vanishing polynomial $x^{m}-x^{m-1}$. The operator simply projects a point to the next convex set.
\ignore{}

More generally, for any integer $0\leq k<m$, we can obtain a vanishing polynomial $x^m-x^k$ for a scenario where $V = \mathop{\dot{\bigcup}}_{i=1}^m A_i$, $T|_{A_i}$ is a bijection from $A_i$ to $A_{i+1}$ for $1\leq i < m$, and $T|_{A_m}=T|_{A_{k+1}}^{-1}\circ \ldots \circ T|_{A_{m-1}}^{-1}$, taking $T|_{A_m}=I$ for $k=m-1$. (Note that even for a plain set $V$, this construction still yields $T^m=T^k$.) If we defined $T|_{A_m} = 0$ and $0\in A_m$, we would have a vanishing polynomial $x^m$.

\subsubsection{Contraction Mappings}
Let $V$ be a metric space as well a vector space, let $\emptyset\neq A\subset V$ be bounded, and let $T\in OP(V)$ be a contraction mapping on $A$. Clearly, the diameter of $T^k(A)$ becomes as small as we wish as $k$ grows. If for some $0\neq p\in F[x]$, $p(T)(u)=0$ for every $u\in T^k(A)$, then  $(p(T)T^k)(u)=0$ for every $u\in A$. If $T^l(V)\subseteq A$ for some $l$, then $p(x)x^{k+l}$ is a vanishing polynomial of $T$.

For example, let $V$ be $\RR^n$ with the Euclidean norm, let $r_1>r_2>0$, and let $0\leq a<1$. Any linear mapping $S$ from $\RR^n$ to itself, whose singular values are all smaller than $a$ in absolute value, is a contraction mapping with parameter $a$. 
Let $S_v$ be such a contraction mapping chosen arbitrarily for each $v\in V$. Define $T(v)$ as follows. If $v\notin \bar{B}(0,r_1)$, pick an arbitrary value in $\bar{B}(0,r_1)$; for $v\in \bar{B}(0,r_1)\setminus \bar{B}(0,r_2)$, $T(v)=S_v(v)$; otherwise, $T(v)=S_0(v)$. By the above argument, $T$ has a vanishing polynomial.

\subsubsection{Finite Images and Dimensions}
When the image of an operator has finite cardinality (possibly after several applications), then it has a vanishing polynomial. This applies to any actual implementation of operators, since all practical data types are discrete. 

\begin{claim}\label{cla: gen for finite num vals}
Let $T:V\rightarrow V$ be an operator, and let $l\geq 0$ be an integer s.t. $|T^l(V)|=|T^{l+1}(V)|=m<\infty$. If $V$ is a vector space, then $T$ has a vanishing polynomial of degree at most $m^2+l$. If $V$ is a plain set, then $T^{m!+l}=T^l$.
\end{claim}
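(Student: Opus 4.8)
The plan is to locate a finite $T$-invariant set on which $T$ acts as a permutation, and then treat the two cases separately on top of that. First I would observe that $T^{l+1}(V)=T^{l}(T(V))\subseteq T^{l}(V)$ because $T(V)\subseteq V$; since these two sets are finite of the same cardinality $m$, they are equal. Writing $S=T^{l}(V)=T^{l+1}(V)$, we then get $T(S)=T(T^{l}(V))=T^{l+1}(V)=S$, so $\pi:=T|_{S}$ is a surjection of the finite set $S$ onto itself, hence a bijection, i.e.\ a permutation of $S$. A trivial induction gives $T^{l+j}(V)=S$ and, as functions $V\to S$, $T^{l+j}=\pi^{j}\circ T^{l}$ for every $j\ge 0$ (apply $T^{l}$ to land in $S$, then each further $T$ is just $\pi$).

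For the plain-set claim I would simply note that the cyclic subgroup of the symmetric group on $S$ generated by $\pi$ has order dividing $m!$, so $\pi^{m!}=\operatorname{id}_{S}$; hence for every $v\in V$ we have $T^{l}(v)\in S$ and $T^{m!+l}(v)=\pi^{m!}(T^{l}(v))=T^{l}(v)$, i.e.\ $T^{m!+l}=T^{l}$.

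For the vector-space claim the key reduction is that, for coefficients $c_{j}\in F$, the operator $\sum_{j}c_{j}T^{l+j}$ equals $g\circ T^{l}$, where $g=\sum_{j}c_{j}\pi^{j}$ is viewed as a function from $S$ to $V$; since $T^{l}(V)=S$, this operator is the zero operator iff $g$ is the zero function on $S$. Now each $\pi^{j}$ maps $S$ into $S$, hence into the $F$-subspace $W_{0}\subseteq V$ spanned by the (at most $m$) elements of $S$, so $\dim_{F}W_{0}\le m$. Thus all the $\pi^{j}$ lie in the function space $W_{0}^{S}$, whose dimension is $|S|\cdot\dim_{F}W_{0}\le m^{2}$, so the $m^{2}+1$ functions $\pi^{0},\pi^{1},\dots,\pi^{m^{2}}$ are $F$-linearly dependent: there exist $c_{0},\dots,c_{m^{2}}\in F$, not all zero, with $\sum_{j=0}^{m^{2}}c_{j}\pi^{j}=0$ on $S$. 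Setting $s(x)=\sum_{j=0}^{m^{2}}c_{j}x^{j}\neq 0$ and $r(x)=x^{l}s(x)$, we have $r\neq 0$, $\deg r\le m^{2}+l$, and by the multiplication formula for $p(T)$ in $OP(V)$ (with $x^{l}$ as the right-hand factor) $r(T)=s(T)\circ T^{l}$, which vanishes because $s(T)$ agrees with $g=\sum_{j}c_{j}\pi^{j}$ on $S=T^{l}(V)$. Hence $r$ is a vanishing polynomial of $T$ of degree at most $m^{2}+l$.

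The main obstacle is the degree bound in the last step: one must realize that even if $V$ is infinite-dimensional, the operators $T^{l+j}$ factor through the finite set $S$, and after the reduction to functions on $S$ they take values in the \emph{finite}-dimensional space spanned by $S$; this is precisely what forces a short linear dependence and yields $m^{2}+l$ rather than the much weaker $m!+l$. A secondary point to handle carefully is that the identity $r(T)=s(T)\circ T^{l}$ relies only on right distributivity (the $T^{l}$ factor is applied first), which is the form of distributivity that actually holds in $OP(V)$.
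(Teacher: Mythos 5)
Your proof is correct and follows essentially the same route as the paper: identify that $T$ restricted to the finite set $S=T^{l}(V)=T^{l+1}(V)$ is a permutation, handle the plain-set case via Lagrange's theorem, and for the vector-space case obtain an $F$-linear dependence among the $m^{2}+1$ powers $T^{0},\dots,T^{m^{2}}$ restricted to $S$ and multiply by $x^{l}$, using right distributivity to conclude $r(T)=s(T)\circ T^{l}=0$. The only cosmetic difference is how the $m^{2}$ dimension count is organized — the paper encodes the restricted powers as rows of an $(m^{2}+1)\times m^{2}$ indicator matrix and takes a left null vector, whereas you bound the dimension of the function space $W_{0}^{S}$ by $|S|\cdot\dim W_{0}\le m^{2}$ — which yields the same degree bound $m^{2}+l$.
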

\begin{proof}
Let $T^{l}(V) = \{v_1,\ldots,v_m\}$, so for every $i\geq 1$, $T^{i-1}$ permutes this set. Assume $V$ is a vector space. Define an $m^2+1$ by $m^2$ matrix $A$ by
\begin{align}
    A_{i,(j-1)m+k} &= \begin{cases}
      1, & T^{i-1}(v_j) = T(v_k) \\
      0, & \text{otherwise,}
    \end{cases}
\end{align}
where $1\leq j,k\leq m$. We thus have for every $1\leq i\leq m^2+1$ and $1\leq j\leq m$ that 
\begin{equation}
    T^{i-1}(v_j) = \sum_{k=1}^m A_{i,(j-1)m+k}T(v_k) \;.
\end{equation}
Since $A$ has more rows than columns, there is a vector $0\neq a\in F^{m^2+1}$ s.t. $a^\top A=0$, and for every $1\leq j \leq m$ we have that 
\begin{align}
\sum_{i=1}^{m^2+1}& a_i T^{i-1}(v_j) \\&= \sum_{i=1}^{m^2+1} a_i \sum_{k=1}^m A_{i,(j-1)m+k}T(v_k) \\
&= \sum_{k=1}^m \left(\sum_{i=1}^{m^2+1} a_i A_{i,(j-1)m+k}\right)T(v_k)\\
&=0\;.
\end{align}
For every $v\in V$ there is some $1\leq j \leq m$ s.t. $T^l(v)=v_j$, and hence $T^{l+i-1}(v)=T^{i-1}(v_j)$ for every $i\geq 1$. Therefore, $\sum_{i=1}^{m^2+1} a_i T^{l+i-1}(v)=\sum_{i=1}^{m^2+1} a_i T^{i-1}(v_j)=0$,
proving the first part.

If $V$ is a mere set, then we still have that $T^{i}$ permutes $\{v_1,\ldots,v_m\}$ for $i\geq 0$, corresponding to some permutation in the symmetric group $S_m$. By Lagrange's theorem, $T^{m!}(v_j)=I(v_j)$, for every $1\leq j\leq m$. For every $v\in V$, $T^l(v) = v_j$ for some $j$, so $T^{m!+l}(v)=T^l(v)$.
\end{proof}

\begin{corollary}\label{cor:finite values}
Every operator with a finite number of values has a vanishing polynomial. 
\end{corollary}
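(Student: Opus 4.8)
The plan is to derive Corollary~\ref{cor:finite values} as an immediate consequence of Claim~\ref{cla: gen for finite num vals}. Recall that throughout this section $V$ is a vector space over $F$, so it suffices, given an operator $T:V\rightarrow V$ whose image $T(V)$ is finite, to exhibit an integer $l\ge 0$ with $|T^l(V)|=|T^{l+1}(V)|<\infty$; Claim~\ref{cla: gen for finite num vals} then produces a vanishing polynomial of $T$ (of degree at most $m^2+l$, where $m=|T^l(V)|$).

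First I would observe that the iterated images form a non-increasing chain of sets. Indeed $T^2(V)\subseteq T(V)$ trivially, and $T^{k+1}(V)=T\bigl(T^k(V)\bigr)\subseteq T\bigl(T^{k-1}(V)\bigr)=T^k(V)$ whenever $T^k(V)\subseteq T^{k-1}(V)$, so by induction $T(V)\supseteq T^2(V)\supseteq T^3(V)\supseteq\cdots$. Consequently the cardinalities $|T(V)|\ge |T^2(V)|\ge\cdots$ form a non-increasing sequence of non-negative integers, and since $|T(V)|<\infty$ this sequence is eventually constant: there is some $l\ge 1$ with $|T^l(V)|=|T^{l+1}(V)|=:m\le |T(V)|<\infty$.

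With such an $l$ in hand, Claim~\ref{cla: gen for finite num vals} applies directly and yields a vanishing polynomial of $T$, which is exactly the assertion of the corollary. There is essentially no obstacle here: the only point worth stating explicitly is the stabilization of $|T^k(V)|$, which is forced because a non-increasing sequence of non-negative integers cannot strictly decrease forever.
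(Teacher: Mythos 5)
Your proof is correct and follows exactly the route the paper intends: the corollary is stated as an immediate consequence of Claim~\ref{cla: gen for finite num vals}, with the only (implicit) step being the stabilization of the non-increasing sequence $|T(V)|\geq |T^2(V)|\geq\cdots$ of finite cardinalities, which you spell out properly.
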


\begin{corollary}
If $V$ is finite, then every operator has a vanishing polynomial. 
\end{corollary}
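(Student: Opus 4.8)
The plan is to obtain this as an immediate specialization of Corollary~\ref{cor:finite values}. Throughout this section $V$ is assumed to be a vector space over $F$, so it remains only to observe that finiteness of $V$ forces every operator on it to take finitely many values: for any $T\in OP(V)$ the image $T(V)$ is a subset of $V$, hence $|T(V)|\leq |V|<\infty$. Corollary~\ref{cor:finite values} then supplies a vanishing polynomial of $T$, and we are done.

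If one prefers to bypass the corollary and appeal to Claim~\ref{cla: gen for finite num vals} directly, note that the cardinalities $|V|\geq |T(V)|\geq |T^2(V)|\geq\cdots$ form a non-increasing sequence of non-negative integers, which must therefore stabilize: there is an integer $l\geq 0$ with $|T^l(V)|=|T^{l+1}(V)|=m$, where $m\leq |V|<\infty$. Applying Claim~\ref{cla: gen for finite num vals} with this $l$ yields a vanishing polynomial of $T$ of degree at most $m^2+l$, which is bounded by $|V|^2+|V|$. Note that one cannot in general take $l=0$, since $|T(V)|$ may be strictly smaller than $|V|$; the stabilization argument is what makes the hypothesis of the claim available.

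I do not anticipate any real obstacle here, since the statement is a direct corollary of the preceding results; the only thing to keep in mind is that the hypotheses ``$V$ a vector space'' and ``finitely many values'' required by Claim~\ref{cla: gen for finite num vals} and Corollary~\ref{cor:finite values} are both met in this setting. For completeness one may remark that a finite vector space over a nontrivial field is automatically finite-dimensional over a finite field, but this observation plays no role in the argument: finiteness of $V$ alone suffices.
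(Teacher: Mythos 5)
Your proposal is correct and matches the paper's (implicit) argument: since $V$ is finite, any $T\in OP(V)$ has finitely many values, so Corollary~\ref{cor:finite values} (equivalently, Claim~\ref{cla: gen for finite num vals} via the stabilization of the non-increasing sequence $|T^l(V)|$) applies, with $V$ being a vector space by the section's standing assumption. The extra details you supply (why $l=0$ may fail, the degree bound $|V|^2+|V|$) are fine but not needed beyond what the paper intends.
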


\begin{claim}
Let $T\in OP(V)$. The span of $\{T^i:i\geq 0\}$ has a finite dimension over $F$ iff $T$ has a vanishing polynomial.
\end{claim}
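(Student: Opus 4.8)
The plan is to prove the two implications separately; both are short and lean entirely on facts already in hand. Throughout, write $U=\mathrm{span}\{T^i:i\ge 0\}\subseteq OP(V)$.

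\medskip
\noindent\emph{($\Leftarrow$) A vanishing polynomial forces $U$ finite-dimensional.} Suppose $T$ has a vanishing polynomial; we may take it monic, say $p$ with $m=\deg(p)$. Apply part~4 of Claim~\ref{cla:elem prop vanishing} with $p_1=x^i$: for every $i\ge 0$ there is $r_i\in F[x]$ with $r_i=0$ or $\deg(r_i)<m$ such that $T^i=r_i(T)$. Hence $T^i\in\mathrm{span}\{I,T,\dots,T^{m-1}\}$ for all $i$, so $U=\mathrm{span}\{I,T,\dots,T^{m-1}\}$ and $\dim_F U\le m<\infty$.

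\medskip
\noindent\emph{($\Rightarrow$) Finite dimension forces a vanishing polynomial.} Suppose $\dim_F U=d<\infty$. The $d+1$ operators $I,T,\dots,T^{d}$ all lie in $U$, hence are linearly dependent over $F$: there exist scalars $a_0,\dots,a_d\in F$, not all zero, with $\sum_{i=0}^d a_iT^i=0$. Put $p(x)=\sum_{i=0}^d a_ix^i$. Then $p\ne 0$ (its coefficient tuple is nonzero), and by the definition of $p(T)$ for operators we have $p(T)=\sum_{i=0}^d a_iT^i=0$, so $p$ is a vanishing polynomial of $T$.

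\medskip
There is essentially no obstacle here; the only point that deserves a moment's care is recognizing that the statement ``$p(T)=0$ as an operator'' is, by the very definition $p(T)(v)=\sum_i a_iT^i(v)$, the same as the coefficient tuple $(a_i)$ being a linear-dependence relation among the powers $T^i$, and that a nonzero such relation yields a nonzero polynomial. (The degenerate case $V=\{0\}$, where $I=0$, is harmless: then $x$ is already vanishing and $U=\{0\}$, consistent with the equivalence.)
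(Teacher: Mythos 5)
Your proof is correct and follows essentially the same route as the paper: the finite-dimension direction is the same linear-dependence argument among $I,T,\dots,T^d$, and the converse is the same reduction of every power of $T$ modulo the vanishing polynomial via part~4 of Claim~\ref{cla:elem prop vanishing}. Your write-up just spells out the details (and the trivial case $V=\{0\}$) a bit more explicitly than the paper does.
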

\begin{proof}
If the dimension is $k\in \NN$, then $I,T,\ldots,T^{k}$ are linearly dependent, yielding a vanishing polynomial.

If $T$ has a vanishing polynomial $p$, then for every $p_1\in F[x]$, $p_1(T) = r(T)$ for $r\in F[x]$, where either $r=0$ or $deg(r)<deg(p)$ (Claim~\ref{cla:elem prop vanishing} part~4).  Therefore, the span of $\{T^i:i\geq 0\}$ has a finite dimension.
\ignore{}
\end{proof}

Finally, one may show a nonexistence result for all polynomial functions over an infinite field, such as $\RR$. Members of this important family of approximation functions over $\RR$ thus never have a vanishing polynomial. At the same time,  discrete functions with a finite number of values, another important family of approximation functions, have been shown to always have a vanishing polynomial (Corollary~\ref{cor:finite values}). It is therefore possible to approximate operators with others that have vanishing polynomials, even if many do not have a vanishing polynomial themselves.

\begin{claim}
Let $p:F^1\rightarrow F^1$ be a polynomial with $deg(p)>1$, and let $|F|=\infty$. Then $p$ does not have a vanishing polynomial. 
\end{claim}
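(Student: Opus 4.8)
The plan is to show that for \emph{every} nonzero $q\in F[x]$ the operator $q(p)\in OP(F^1)$ obtained by iterated composition is itself a nonzero polynomial function, and hence a nonzero element of $OP(F^1)$. Identifying $F^1$ with $F$, write $q(x)=\sum_{i=0}^m a_ix^i$ with $a_m\neq 0$, and recall that by the paper's conventions $q(p)=\sum_{i=0}^m a_i\, p^{\circ i}$, where $p^{\circ i}$ is the $i$-fold compositional iterate of $p$ (with $p^{\circ 0}=I$). Our goal is to prove $q(p)\neq 0$ in all cases, which immediately says no vanishing polynomial exists.

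The key observation is a degree count for the compositional iterates. Let $d=\deg(p)\geq 2$ and let $c\neq 0$ be the leading coefficient of $p$. By induction on $i$, $p^{\circ i}$ is a polynomial of degree $d^i$ whose leading coefficient is $c^{(d^i-1)/(d-1)}$, which is nonzero in any field since it is a power of $c$; the inductive step merely substitutes $p^{\circ(i-1)}$ into $p$ and reads off the top term. Because $d\geq 2$, the exponents $d^0=1<d^1<d^2<\dots<d^m$ are strictly increasing, so in the sum $\sum_{i=0}^m a_i\, p^{\circ i}$ the unique term of maximal degree is $a_m\, p^{\circ m}$, of degree $d^m$. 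Hence, whenever $m\geq 1$, $q(p)$ is a polynomial function of degree exactly $d^m\geq d\geq 2$, with nonzero leading coefficient.

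It remains to pass from ``nonzero polynomial in $F[x]$'' to ``nonzero function on $F$'', and this is exactly where $|F|=\infty$ is needed: a nonzero polynomial of degree $d^m$ has at most $d^m<\infty=|F|$ roots, so it cannot vanish on all of $F$. Therefore $q(p)\neq 0$ in $OP(F^1)$ whenever $\deg(q)\geq 1$. The degenerate case $\deg(q)=0$ is immediate: then $q=a_0\neq 0$ and $q(p)=a_0 I$, which is nonzero because $F^1\neq\{0\}$ (as the field is assumed nontrivial). In every case $q$ fails to be a vanishing polynomial of $p$, which proves the claim.

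I expect no serious obstacle here; the only points requiring care are verifying the iterate-degree formula over an arbitrary field (it goes through because the leading coefficient of $p^{\circ i}$ is a power of $c$, hence nonzero regardless of characteristic) and noting explicitly where $\deg(p)>1$ enters — it is what keeps the identity term $p^{\circ 0}=I$, of degree $1$, from interfering with the degree count, and indeed for $\deg(p)\leq 1$ all iterates stay of degree at most $1$ and vanishing polynomials do exist.
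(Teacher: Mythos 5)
Your proof is correct and follows essentially the same route as the paper: show that for every nonzero $q$ the operator $q(p)$ acts as a nonzero polynomial in $v$, then use the fact that a nonzero polynomial over an infinite field has only finitely many roots, so it cannot be the zero function. Your explicit induction on the compositional iterates (degree $d^i$, leading coefficient a power of $c$, strictly increasing degrees so the top term cannot cancel) together with the separate treatment of constant $q$ is a more careful rendering of the paper's brief non-cancellation step, which is stated loosely there (the quoted leading coefficient $b_na_m^n$ corresponds to multiplicative rather than compositional powers of $p$, though the conclusion is unaffected).
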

\begin{proof}
Assume, by way of contradiction, that $q\in F[x]$ is vanishing in $p$. Since $p,q\in F[x]$, it holds that $q(p)\in F[x]$.
Write $q(x)=\sum_{i=0}^n b_ix^i$ and $p(x)=\sum_{i=0}^m a_ix^i$, where $m=deg(p)$ and $n=deg(q)\geq 1$. 
The coefficient of the highest power in $q(p)$ is $b_na_m^n\neq 0$, so $q(p)\neq 0$. Note that when $q(p)$ is evaluated on $v\in F^1$, its free coefficient becomes linear in $v$. However, since $deg(p)>1$, the highest power of $v$ in $q(p)(v)$ cannot be cancelled this way as can happen for a linear $p$ (which indeed has a vanishing polynomial).

Thus, $q(p)(v)$ is a nonzero polynomial in $v$, and the number of its roots is bounded by its degree. Since $|F|=\infty$, there must be $v\in F^1$ with $q(p)(v)\neq 0$, contradicting the fact that $q$ is vanishing in $p$.
\end{proof}

\subsubsection{Affine Mappings}
If $T(v)=A(v)+b$, where $A$ is linear and $b\in V$, then for any polynomial $p(x)=\sum_{i=0}^m a_0x^i$ and $v\in V$, $p(T)(v)=p(A)(v)+c$, where $c\in V$ is independent of $v$. Therefore,
\begin{align}
p^2(T)(v) &= \left(\sum_{i=0}^m a_ip(T)\circ T^i\right)(v)\\
&=\sum_{i=0}^m a_ip(T)(T^i(v))\\
&=\sum_{i=0}^m a_i(p(A)(T^i(v))+c) \\
&=\sum_{i=0}^m a_i c + p(A)\left(\sum_{i=0}^m a_iT^i(v)\right) \\
&= p(1)c + p(A)(p(T)(v))\;.
\end{align}
If, in addition, $p$ vanishes in $A$, then 
\begin{align}
    p^2(T)(v)=p(1)c = p(1)p(T)(v)\;,
\end{align}
yielding the following:
\begin{claim}
Let $T(v)=A(v)+b$, where $A$ is linear and $b\in V$. If $p$ is vanishing in $A$ and $deg(p)\geq 1$, then $p^2-p(1)p$ is vanishing in $T$.
\end{claim}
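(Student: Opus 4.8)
The plan is to split the claim into two parts: first, that the operator $(p^2 - p(1)p)(T)$ is the zero operator; and second, that $p^2 - p(1)p$ is a nonzero polynomial, so that it genuinely qualifies as a vanishing polynomial in the sense of the definition. For the first part I would use the linearity of the map $q\mapsto q(T)$ recorded at the start of this section to write $(p^2 - p(1)p)(T) = p^2(T) - p(1)\,p(T)$, and then invoke the identity derived immediately above the statement, namely $p^2(T)(v) = p(1)c + p(A)(p(T)(v))$, where $c$ is the $v$-independent vector satisfying $p(T)(v) = p(A)(v) + c$. Since $p$ vanishes in $A$, we have $p(A) = 0$, so $p(T)(v) = c$ for all $v$, whence $p^2(T)(v) = p(1)c = p(1)\,p(T)(v)$. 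Thus $(p^2 - p(1)p)(T)$ sends every $v\in V$ to $0$, i.e.\ it is the zero operator.

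For the second part I would factor $p^2 - p(1)p = p\,(p - p(1))$ in the integral domain $F[x]$. The factor $p$ is nonzero because it is, by hypothesis, a vanishing polynomial of $A$. The factor $p - p(1)$ is nonzero because $deg(p)\geq 1$ forces $p$ to be non-constant, so it cannot coincide with the constant polynomial $p(1)$. Hence the product is nonzero, and combined with the first part, $p^2 - p(1)p$ is a vanishing polynomial of $T$.

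There is no serious obstacle here, since the computational core is already in place before the statement; the only point requiring attention is ruling out that $p^2 - p(1)p$ degenerates to the zero polynomial, and that is precisely where the hypothesis $deg(p)\geq 1$ is used. One may additionally note that $deg(p - p(1)) = deg(p)$ in this case, so in fact $deg(p^2 - p(1)p) = 2\,deg(p)$.
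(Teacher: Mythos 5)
Your proposal is correct and takes essentially the same route as the paper: the identity $p^2(T)(v)=p(1)c+p(A)(p(T)(v))$ combined with $p(A)=0$ (and hence $p(T)(v)=c$) gives $(p^2-p(1)p)(T)=0$, exactly as in the derivation preceding the claim. Your explicit verification that $p^2-p(1)p$ is not the zero polynomial, via the factorization $p\,(p-p(1))$ in the integral domain $F[x]$ and the hypothesis $deg(p)\geq 1$, is a detail the paper leaves implicit and is a sensible addition.
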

\begin{corollary}
If $A\in F^{n\times n}$ is a matrix and $b\in F^n$, then $T\in OP(F^n)$, defined as $T(v) = Av+b$, has a vanishing polynomial.
\end{corollary}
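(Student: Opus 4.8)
The plan is to obtain the statement as an immediate consequence of the preceding Claim once we exhibit a suitable vanishing polynomial for the linear part $A$. The hypotheses of that Claim require an $F$-linear $A$, a vector $b\in V$, and a polynomial $p$ with $\deg(p)\geq 1$ that vanishes in $A$. Here $V=F^n$, the matrix $A$ is linear, and $b\in F^n$, so all that is missing is the polynomial $p$.

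First I would invoke the Cayley--Hamilton theorem: the characteristic polynomial $p_A(x)=\det(xI-A)$ is monic of degree $n$ and satisfies $p_A(A)=0$. Since $A$ is $n\times n$ with $n\geq 1$, we have $\deg(p_A)=n\geq 1$, so $p:=p_A$ meets the hypothesis $\deg(p)\geq 1$ of the Claim. (One could equally use the minimal polynomial of $A$, which also has degree at least $1$.) Applying the Claim with this $p$ yields that $p^2-p(1)p$ vanishes in $T$, i.e. $(p^2-p(1)p)(T)=0$.

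The only remaining point is to check that $p^2-p(1)p$ is genuinely a \emph{vanishing polynomial} in the sense of the definition, that is, that it is not the zero polynomial. This is a one-line degree count: since $p$ is monic of degree $n$, the product $p^2$ is monic of degree $2n$, whereas $p(1)p$ (with $p(1)=\det(I-A)\in F$ a scalar) has degree at most $n<2n$; hence $p^2-p(1)p$ has degree exactly $2n\geq 2$ and is therefore nonzero. Combined with $(p^2-p(1)p)(T)=0$, this shows $T$ has a vanishing polynomial.

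I do not expect any real obstacle: the result is essentially a corollary of the Claim together with Cayley--Hamilton, and the only care required is the degree bookkeeping that rules out the degenerate collapse $p^2-p(1)p=0$ — which cannot occur precisely because $\deg(p)\geq 1$ forces $2n>n$.
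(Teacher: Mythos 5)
Your proposal is correct and follows essentially the same route the paper intends: the corollary is an immediate consequence of the preceding claim once Cayley--Hamilton supplies a vanishing polynomial of degree $n\geq 1$ for $A$. Your extra degree count confirming that $p^2-p(1)p\neq 0$ is a harmless (and valid) check, already implicit in the claim's conclusion that $p^2-p(1)p$ is a vanishing polynomial.
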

\subsubsection{Rational Powers}
We start by considering the inverse of a bijection.

\begin{definition}[Reciprocal polynomial]
If $p\in F[x]$, $p(x)=\sum_{i=0}^m a_i x^i$, then the reciprocal polynomial of $p$ is defined as $p^*(x) = \sum_{i=0}^m a_i x^{m-i}$.
\end{definition}
 Informally, $p^*(x)$ equals $x^m p(1/x)$. For an invertible matrix $A$ with characteristic polynomial $p_A$, it is known that $p_{A^{-1}} = p_A(0)^{-1} p_A^*$.
\begin{claim}\label{cla:reciprocal vanishing polys}
If $T$ is invertible and $p(x)=\sum_{i=0}^m a_i x^i$ is vanishing in $T$, then $p^*(x)$ is vanishing in $T^{-1}$. If $p_m$ is the minimal polynomial of $T$, then $p_m(0)^{-1}p_m^*$ is the minimal polynomial of $T^{-1}$.
\end{claim}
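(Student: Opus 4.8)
The plan is to treat the two assertions in turn, leaning on the algebraic identities available in $OP(V)$ --- right distributivity $(T_1+T_2)\circ T_3 = T_1\circ T_3 + T_2\circ T_3$, the scalar rule $(aT_1)\circ T_2 = a(T_1\circ T_2)$, and $0\circ T = 0$ --- together with the divisibility property of the minimal polynomial already established.

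For the first assertion, I would start from $p(T)=0$ and compose on the right with $T^{-m}$, which is well defined since $T$ is invertible. Using right distributivity and the scalar rule, $p(T)\circ T^{-m} = \bigl(\sum_{i=0}^m a_iT^i\bigr)\circ T^{-m} = \sum_{i=0}^m a_i\bigl(T^i\circ T^{-m}\bigr) = \sum_{i=0}^m a_i T^{i-m}$, and since $T^{i-m}=(T^{-1})^{m-i}$ this sum is precisely $p^*(T^{-1})$. On the other hand $p(T)\circ T^{-m} = 0\circ T^{-m} = 0$. Hence $p^*(T^{-1})=0$, and as $p\neq 0$ forces $p^*\neq 0$ (it carries the same coefficients, merely reindexed), $p^*$ is a vanishing polynomial of $T^{-1}$. (One could equally argue pointwise: given $v\in V$ write $v=T^m(u)$; then $p^*(T^{-1})(v)=\sum_i a_i T^{-(m-i)}T^m(u)=\sum_i a_i T^i(u)=p(T)(u)=0$, and $u$ runs over all of $V$ as $v$ does.)

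For the second assertion, the first step is to observe that $T$ invertible is in particular surjective, so by Claim~\ref{cla:surjective minimial poly} the constant term $p_m(0)=a_0\neq 0$. Therefore the leading coefficient of $p_m^*$ is $a_0\neq 0$, so $\deg(p_m^*)=\deg(p_m)$; write $m$ for this common degree. Then $p_m(0)^{-1}p_m^* = a_0^{-1}p_m^*$ is monic of degree $m$, and by the first assertion it vanishes in $T^{-1}$. It remains only to check minimality of its degree: if some $q\neq 0$ vanished in $T^{-1}$ with $\deg q < m$, then applying the first assertion to the invertible operator $T^{-1}$ (whose inverse is $T$) would make $q^*$ vanish in $T$, with $q^*\neq 0$ and $\deg(q^*)\leq \deg(q) < m = \deg(p_m)$, contradicting the minimality of $p_m$. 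Hence the minimal polynomial of $T^{-1}$ has degree $m$, and being the unique monic such polynomial it must equal $a_0^{-1}p_m^* = p_m(0)^{-1}p_m^*$.

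The argument is essentially bookkeeping; the one point that genuinely needs care --- and the place where a hasty proof would go wrong --- is establishing $p_m(0)\neq 0$ \emph{before} claiming $\deg(p_m^*)=\deg(p_m)$. Without it, $p_m^*$ could drop in degree and the degree comparison between vanishing polynomials of $T$ and of $T^{-1}$ would collapse; invoking surjectivity of $T$ through Claim~\ref{cla:surjective minimial poly} is exactly what rules this out.
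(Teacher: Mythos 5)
Your proposal is correct and follows essentially the same route as the paper: the first part is the paper's pointwise computation $p(T)(T^{-m}(v))=0 \Rightarrow p^*(T^{-1})(v)=0$ (your right-distributivity phrasing is just the operator-level restatement), and the second part likewise hinges on Claim~\ref{cla:surjective minimial poly} giving $p_m(0)\neq 0$ plus applying the first part symmetrically to $T^{-1}$ to match degrees. Your contradiction with a hypothetical lower-degree vanishing polynomial of $T^{-1}$ is the same degree comparison the paper makes via the minimal polynomial $q_m$ of $T^{-1}$.
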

\begin{proof}
For every $v\in V$, $p(T)(T^{-m}(v))=0$, so
$\sum_{i=0}^m a_iT^{i-m}(v)=0$. Equivalently, $p^*(T^{-1})(v)=0$, as required. By Claim~\ref{cla:surjective minimial poly}, since $T$ is surjective, we have that $p_m(0)\neq 0$, and thus, 
$p_m(0)^{-1}p_m^*$ is a monic polynomial and vanishing in $T^{-1}$. 
By the same argument, if $q_m$ is the minimal polynomial of $T^{-1}$, then $q_m(0)^{-1}q_m^*$ is monic and vanishing in $T$. Therefore, $deg(q_m)=deg(p_m)$, and $p_m(0)^{-1}p_m^*$ is in fact the minimal polynomial of $T^{-1}$.
 \end{proof}

 \begin{claim}\label{cla: vanishing for rational powers}
 Let $T$ have a vanishing polynomial $p(x)=\sum_{i=0}^m a_i x^i$, and let $T_1$ be an operator satisfying $T_1^l = T^k$, where $k,l\in\NN$, $l>0$ (informally, $T_1 = T^{k/l}$). Then $T_1$ has a vanishing polynomial of degree at most $m l$.
 \end{claim}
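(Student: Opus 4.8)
The plan is to reduce the statement to a linear-dependence argument inside the vector space $OP(V)$. Since $p$ vanishes in $T$, Claim~\ref{cla:elem prop vanishing} part~4 shows that every power $T^j$ ($j\ge 0$) equals $r(T)$ for some $r\in F[x]$ with $r=0$ or $\deg(r)<\deg(p)$; hence the $F$-span of $\{T^i:i\ge 0\}$ has dimension at most $\deg(p)\le m$. I would then consider the $m+1$ operators $T^0=I,\ T^{k},\ T^{2k},\ \dots,\ T^{mk}$, which all lie in this span and are therefore linearly dependent: there exist $c_0,\dots,c_m\in F$, not all zero, with $\sum_{j=0}^m c_j T^{jk}=0$.

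Next I would define $q_1(x)=\sum_{j=0}^m c_j x^{jl}\in F[x]$. It is nonzero (its nonzero coefficients are exactly the nonzero $c_j$), and its degree is at most $ml$. Evaluating at $T_1$ and using only associativity of composition together with the hypothesis $T_1^{l}=T^{k}$, so that $T_1^{jl}=(T_1^{l})^{j}=(T^{k})^{j}=T^{jk}$, I obtain
\[
q_1(T_1)=\sum_{j=0}^m c_j T_1^{jl}=\sum_{j=0}^m c_j T^{jk}=0 .
\]
Thus $q_1$ is a vanishing polynomial of $T_1$ of degree at most $ml$, which is exactly the claim.

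I do not expect a genuine obstacle here; the only point requiring care is that $OP(V)$ is not left-distributive, so the manipulations must stay within sums of powers of a single operator rather than compositions with sums — which is why I build $q_1$ directly from the linear relation among the $T^{jk}$ instead of substituting $T^{k}$ into an auxiliary polynomial via a composition identity. (The degenerate case $k=0$ is subsumed: the operators $T^{jk}$ are then all equal to $I$ and a dependence relation is immediate.)
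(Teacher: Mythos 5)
Your proof is correct and follows essentially the same route as the paper: both arguments obtain a nontrivial linear dependence $\sum_{j=0}^m c_j T^{jk}=0$ by placing the $m+1$ operators $I,T^k,\dots,T^{mk}$ in a space of dimension at most $m$ (you via Claim~\ref{cla:elem prop vanishing} part~4, the paper by dividing each $x^{jk}$ by $p$ and counting remainders), and then pass to $T_1$ through $T_1^{jl}=(T_1^l)^j=T^{jk}$ to get a vanishing polynomial of degree at most $ml$. The degenerate cases ($k=0$, $m=0$) are handled just as easily in your formulation, so nothing is missing.
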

 \begin{proof}
If $p_1(x)=\sum_{i=0}^n b_ix^i$ vanishes in $T^k$, then 
$\sum_{i=0}^n b_ix^{il}$ vanishes in $T_1$. Thus, 
it suffices to find a vanishing polynomial for $T^k$ of degree at most $m$.

  For $k=0$ or $m=0$ the claim is obvious, so assume $k,m\geq 1$. For every $0\leq j \leq m$, we have $x^{jk} = q_j(x)p(x)+r_j(x)$, where $q_j,r_j\in F[x]$ and either $r_j=0$ or $deg(r_j)<m$. The set $\{r\in F[x]: deg(r)<m\}\cup\{0\}$ is a vector space that is isomorphic to $F^m$, so $r_0, \ldots, r_{m}$ are linearly dependent, namely, $\sum_{j=0}^{m} \alpha_j r_j=0$ for some $\alpha_0,\ldots,\alpha_m\in F$ that are not all $0$. Therefore,  $\sum_{j=0}^{m} \alpha_j (T^{k})^{j}=\sum_{j=0}^{m} \alpha_j ((q_jp)(T)+r_j(T))=0$, and we are done.

 \end{proof}
 \begin{corollary}
     If $T$ is bijective, with a degree $m$ vanishing polynomial, and $T_1 = T^{k/l}$ where $k\in\ZZ$, $l\in\NN$, $l>0$, then $T_1$ has a vanishing polynomial of degree at most $m l$.
 \end{corollary}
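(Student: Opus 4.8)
The plan is to derive the result directly from Claims~\ref{cla:reciprocal vanishing polys} and~\ref{cla: vanishing for rational powers}, splitting according to the sign of $k$. If $k\geq 0$, then $T_1^l=T^k$ with $k,l\in\NN$ and $l>0$, so Claim~\ref{cla: vanishing for rational powers} applies verbatim and already yields a vanishing polynomial of $T_1$ of degree at most $ml$; in this case bijectivity of $T$ plays no role, and the statement $k=0$ is also covered (then $T_1^l=I$, so $x^l-1$ works).

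The substance is the case $k<0$. Here I would write $k=-j$ with $j\in\NN$, $j>0$, so that $T_1^l=T^{-j}=(T^{-1})^{j}$, which is legitimate because $T$ is bijective and hence $T^{-1}\in OP(V)$ is well defined. Next, taking $p$ to be a vanishing polynomial of $T$ of degree $m$, Claim~\ref{cla:reciprocal vanishing polys} shows that the reciprocal polynomial $p^*$ is vanishing in $T^{-1}$, and $deg(p^*)\leq m$ (indeed $deg(p^*)=deg(p_m)\leq m$ if one takes $p$ to be the minimal polynomial $p_m$, since $p_m(0)\neq 0$ by Claim~\ref{cla:surjective minimial poly}, as $T$ is surjective). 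Thus $T^{-1}$ has a vanishing polynomial of degree at most $m$, and $T_1$ is a $(j/l)$-th power of $T^{-1}$ in the exact sense required by Claim~\ref{cla: vanishing for rational powers}. Applying that claim to the operator $T^{-1}$ then produces a vanishing polynomial of $T_1$ of degree at most $ml$, which completes the argument.

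I do not expect a real obstacle: the only points needing care are that passing to the reciprocal polynomial does not increase the degree — this is precisely where bijectivity (equivalently surjectivity) enters, via Claim~\ref{cla:surjective minimial poly} guaranteeing a nonzero free coefficient so that $p_m^*$ is again monic of the same degree — and the small bookkeeping of rewriting $T^{-j}$ as $(T^{-1})^{j}$ before invoking Claim~\ref{cla: vanishing for rational powers}. Beyond that, the proof is a direct combination of the two preceding claims.
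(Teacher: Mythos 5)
Your proposal is correct and matches the intended argument: the paper states this corollary without a separate proof precisely because it follows by combining Claim~\ref{cla:reciprocal vanishing polys} (giving $T^{-1}$ a vanishing polynomial of degree at most $m$ via the reciprocal polynomial) with Claim~\ref{cla: vanishing for rational powers} applied to $T$ or to $T^{-1}$ according to the sign of $k$. Your case split and the observation that $\deg(p^*)\leq m$ are exactly the needed bookkeeping, so there is nothing to add.
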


\subsubsection{Product Operators}
Given several vector spaces over the same field and operators over each, we may define the product operator on the product, or direct sum, of the spaces. It is easy to observe that if each operator has a vanishing polynomial, then so does the product operator. 
\begin{claim}
Let $T_i:V_i\rightarrow V_i$, $i=1,\ldots,n$, where $V_i$ are vector spaces over $F$, and define $T:\bigoplus_i V_i\rightarrow \bigoplus_i V_i$ as the componentwise application of $T_i$. If $p_i$ is vanishing for $T_i$, then $\Pi_i p_i$ is vanishing for $T$. 
\end{claim}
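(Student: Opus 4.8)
The plan is to reduce everything to the componentwise behavior of the product operator together with Claim~\ref{cla:elem prop vanishing} part~2. First I would record the basic fact that evaluating any polynomial at $T$ is carried out componentwise: since integer powers, operator addition, and scalar multiplication in $OP(\bigoplus_i V_i)$ are all defined coordinate by coordinate, for every $q\in F[x]$ and every $(v_1,\ldots,v_n)\in\bigoplus_i V_i$ one has $q(T)((v_1,\ldots,v_n)) = (q(T_1)(v_1),\ldots,q(T_n)(v_n))$. This is the only place where a little care is needed, because $OP(V)$ is not left distributive; but the identity $q(T)(v)=\sum_k a_k T^k(v)$ only involves addition and scalar multiplication of the operators $T^k$, each of which acts separately on each summand, so the reduction is legitimate.

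Next I would apply this with $q = \Pi_i p_i$. Fix an index $j$ and write $\Pi_i p_i = p_j\cdot\bigl(\Pi_{i\neq j} p_i\bigr)$. Since $p_j$ vanishes in $T_j$, Claim~\ref{cla:elem prop vanishing} part~2 gives $(\Pi_i p_i)(T_j)(v)=0$ for every $v\in V_j$; that is, $(\Pi_i p_i)(T_j)$ is the zero operator on $V_j$. As $j$ was arbitrary, the componentwise formula from the first step shows $(\Pi_i p_i)(T)((v_1,\ldots,v_n))=(0,\ldots,0)$ for every element of $\bigoplus_i V_i$, i.e. $(\Pi_i p_i)(T)=0$.

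Finally, to conclude that $\Pi_i p_i$ is genuinely a \emph{vanishing} polynomial I must check that it is nonzero. Each $p_i$ is nonzero by the definition of a vanishing polynomial, and $F[x]$ is an integral domain because $F$ is a field, so $\Pi_i p_i\neq 0$. Combined with $(\Pi_i p_i)(T)=0$, this is exactly the definition of $\Pi_i p_i$ being vanishing in $T$. There is essentially no hard step here: the statement is the direct-sum analogue of Claim~\ref{cla:elem prop vanishing} part~3, and the only subtlety worth flagging is the componentwise evaluation of polynomials in the non-left-distributive monoid $OP(\bigoplus_i V_i)$, which nonetheless goes through.
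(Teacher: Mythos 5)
Your proof is correct, and it is essentially the argument the paper has in mind: the paper states this claim without proof (introducing it as an easy observation), and the intended justification is exactly your combination of componentwise evaluation of $q(T)$ on the direct sum with the factor-extraction identity of Claim~\ref{cla:elem prop vanishing} part~2, plus the remark that $\Pi_i p_i\neq 0$ since $F[x]$ is an integral domain. Your flagging of why componentwise evaluation is legitimate despite the lack of left distributivity is a nice touch, but no further changes are needed.
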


\subsection{Interpretation as a Linear Operator on a Nonlinear Embedding}
Let $T$ have a vanishing polynomial $p$ of degree $n\geq 1$. We can always make $p$ monic, so w.l.o.g., $p(x)=x^n+\sum_{i=0}^{n-1} a_i x^i$. Define an embedding $\varphi_T:V\rightarrow \bigoplus_{i=1}^n V$, by $\varphi_T(v)=(v,T(v),\dots,T^{n-1}(v))^\top$. Thus, $\varphi_T(T(v))=(T(v),\ldots,T^{n-1}(v),-\sum_{i=0}^{n-1}a_iT^i(v))^\top = C_p^\top \varphi_T(v)$, where $C_p$ is the \textit{companion matrix} of $p$, 
\begin{align}
  C_p=
\begin{pmatrix}
  0 & 0 & \ldots & 0 & -a_0\\
  1 & 0 & \ldots & 0 & -a_1\\
  0 & 1 & \ldots & 0 & -a_2\\
  \vdots & \vdots & \ddots & \vdots & \vdots\\
  0 & 0 & \ldots & 1 & -a_{n-1}\\  
\end{pmatrix}
\end{align}
A known property of $C_p$ is that its characteristic and minimal polynomials are equal to $p$. 

Thus, the possibly nonlinear $T$ may be represented as a linear transformation from $\bigoplus_{i=1}^n V$ to itself, restricted to $\varphi_T(V)$. 

\subsection{Vanishing Polynomials and Eigenvalues}
For a linear operator $T$, the characteristic polynomial is a vanishing polynomial in $T$ and is also solved by the eigenvalues of $T$. In this subsection we examine the existence of such a phenomenon for nonlinear operators. 

For $T\in OP(V)$, $0\neq v\in V$ is an eigenvector of $T$ with eigenvalue $\lambda\in F$, if $T(v)=\lambda v$. Throughout this subsection we will assume $V\neq \{0\}$. For the special values $\lambda = 0,1$ we can draw a general connection to the roots of a vanishing polynomial. 
\begin{claim}\label{cla:0 1 eigenvalues}
Let $T\in OP(V)$ have a vanishing polynomial $p$. 
\begin{enumerate}
\item 
If $T$ has an eigenvector with $\lambda=1$ (a nonzero fixed point), then $p(\lambda)=0$.
\item    
If $T$ is homogeneous and has an eigenvector with $\lambda=0$, then $p(\lambda)=0$. 
\end{enumerate} 
\end{claim}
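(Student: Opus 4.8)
Both parts reduce to evaluating $p(T)$ on the eigenvector and using that $V$ is a vector space (so $cv=0$ with $v\neq 0$ forces $c=0$). Write $p(x)=\sum_{i=0}^m a_i x^i$. The only input I need is a formula for $T^i(v)$ when $v$ is an eigenvector, which follows by an easy induction on $i$, handling the two eigenvalues separately.

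\textbf{Part 1 ($\lambda=1$).} First I would observe that if $T(v)=v$, then $T^i(v)=v$ for every $i\geq 0$, by induction ($T^0(v)=v$ trivially, and $T^{i}(v)=T(T^{i-1}(v))=T(v)=v$). Hence
\[
0 = p(T)(v) = \sum_{i=0}^m a_i T^i(v) = \Big(\sum_{i=0}^m a_i\Big) v = p(1)\,v .
\]
Since $v\neq 0$ and $V$ is a vector space over a (nontrivial) field, this forces $p(1)=0$, i.e. $p(\lambda)=0$.

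\textbf{Part 2 ($\lambda=0$, $T$ homogeneous).} Here $T(v)=0\cdot v=0$. The key point, and the place where homogeneity is actually used, is the induction step: $T^0(v)=v$, $T^1(v)=0$, and for $i\geq 2$, $T^i(v)=T(T^{i-1}(v))=T(0)=0$ because $T(0)=0$. Therefore
\[
0 = p(T)(v) = \sum_{i=0}^m a_i T^i(v) = a_0 v ,
\]
and again $v\neq 0$ gives $a_0=0$, that is $p(0)=p(\lambda)=0$.

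\textbf{Main obstacle.} There is essentially no obstacle; the only subtlety worth flagging is that in Part 2 the homogeneity hypothesis is exactly what lets the induction propagate past $i=1$ (without $T(0)=0$ one cannot control $T^2(v)$), and that the conclusion relies on $V$ being a genuine vector space so that a nonzero vector is not a zero divisor for scalars — this is why the analogous "$\lambda$ is a root" statement need not hold for general $\lambda$ or for plain sets.
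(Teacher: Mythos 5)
Your proof is correct and follows essentially the same route as the paper: evaluate $p(T)$ on the eigenvector, using $T^i(v)=v$ for $\lambda=1$ and $T^i(v)=0$ for $i\geq 1$ when $\lambda=0$ (where homogeneity, i.e. $T(0)=0$, drives the induction past $i=1$), then conclude from $v\neq 0$. Your explicit flagging of where $T(0)=0$ is needed is a slightly more careful write-up of the same argument.
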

\begin{proof}
\begin{enumerate} 
\item
If $v$ is a fixed point of $T$, then $T^k(v) = v$ for every $k\geq 0$, so $0 = p(T)(v) =  (\sum_{i=0}^m a_i)v = p(1)v$. Since $v\neq 0$, $p(1)=0$.
\item 
If $v$ is an eigenvector with $\lambda=0$, then $T^k(v) = 0$ for every $k\geq 1$, so $0 = p(T)(v) = a_0 v$. Since $v\neq 0$, it follows that $a_0=0$, and hence $p(0)=0$.

\end{enumerate} 
\end{proof}
This result of course holds for the minimal polynomial in particular.

To illustrate, let $V$ be a Hilbert space, and let $P_C$ be the projection onto a nonempty closed convex set $C$. The polynomial $p(x)=x^2-x$ is vanishing in $P_C$ and also minimal, every $0\neq v\in C$ is an eigenvector of $P_C$ with $\lambda=1$, and indeed $p(1)=0$. Even though $0$ is also a root of $p$, it is not necessarily an eigenvalue, for example, if  $0\notin C$. To demonstrate the second part of the claim, for any vector space $V$ and a set $A$, $\{0\}\subsetneq A\subsetneq V$, define $T$ as $T(v)=\Ind{v\in A}\cdot v$. This operator too has $p(x)=x^2-x$ as a vanishing and minimal polynomial. Every $0\neq v\notin A$ is an eigenvector with $\lambda=0$ and we have that $p(0)=0$. In addition, every $0\neq v\in A$ is an eigenvector with eigenvalue $1$, so here all roots of $p$ are eigenvalues.

In general, though, eigenvalues are not necessarily roots of the minimal polynomial, even in elementary cases, such as nilpotent operators. For nilpotent operators, $p(x)=x^m$ is a minimal polynomial for some $m\geq 1$, and any $0\neq v\in T^{m-1}(V)$ is an eigenvector with $\lambda=0$, which is indeed a root of $p$. However, even for $V=\RR$,  with $T(v)=\Ind{v\in (0,a)}(v+1)$ for some $0<a<1$, there are also eigenvectors for every $\lambda\in (1/a+1,\infty)$.

\ignore{
}

One case where we may hope for a more intricate relation between eigenvalues and roots of a vanishing polynomial of $T$ is that of $\gamma$-homogeneous operators.

\begin{definition}[$\gamma$-homogeneity]
\label{def:k-hom}
Let $T\in OP(V)$. Given $\gamma\in\NN$, $T$ is $\gamma$-homogeneous if for every $0\neq a\in F$ and $v\in V$, $T(av) = a^\gamma T(v)$. If $F=\RR\text{ or }\CC$ and $\gamma\in[0,\infty)$, $T$ is absolutely $\gamma$-homogeneous if $T(av) = |a|^\gamma T(v)$ for every $a\neq 0$ and $v\in V$. If $F=\RR$ and $\gamma\in[0,\infty)$, $T$ is positively $\gamma$-homogeneous if $T(av) = a^\gamma T(v)$ for every $a>0$ and $v\in V$.
\end{definition}
Note that if $a^\gamma\neq 1$ for some $0\neq a\in F$, then a $\gamma$-homogeneous $T$ satisfies $T(0)=0$. If $\gamma\neq 0$, then $T(0)=0$ is implied by absolute and positive $\gamma$-homogeneity. We may state the following.
\begin{claim}\label{cla:homgeneous eigenvalues}
Let $T\in OP(V)$ have a vanishing polynomial, and a minimal polynomial $p$.
\begin{enumerate}
    \item
    If $T$ is nilpotent and $\gamma$- or absolutely $\gamma$-homogeneous, then its only eigenvalue is the only root of $p$, namely, $0$.
    \item 
     If $T$ is \emph{not} nilpotent and $F$ is infinite, then $T$ cannot be $\gamma$-, absolutely $\gamma$-, or positively $\gamma$-homogeneous if $\gamma\notin \{0,1\}$.
    \item 
    Let $\gamma=1$. If $T$ is $\gamma$-homogeneous and $|F|>2$, then any eigenvalue $\lambda$ of $T$ satisfies $p(\lambda)=0$.  If $T$ is absolutely $\gamma$-homogeneous, then $p(|\lambda|)=0$ for any eigenvalue $\lambda$, and in addition $p(0)=0$.
    \item 
    Let $\gamma=0$, and let $T$ be $\gamma$-homogeneous (vanilla, absolute, or positive). If $p(1)\neq 0$, then the only feasible eigenvalue (nonnegative eigenvalue for positive homogeneity) is $\lambda=0$. In addition, except for vanilla $\gamma$-homogeneity with $|F|=2$, $p(0)=0$.
\end{enumerate}
\end{claim}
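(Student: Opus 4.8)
The plan is to prove all four parts through one mechanism: tracking how the iterates $T^i$ act on an eigenvector $v$ (with $T(v)=\lambda v$) and on its scalar multiples $av$. The point is that, although $T$ is nonlinear so that $T(v)=\lambda v$ does \emph{not} give $T^i(v)=\lambda^i v$ in general, $\gamma$-homogeneity (vanilla, absolute, or positive) \emph{does} let us iterate the scaling: by induction on $i$ one gets $T^i(av)=a^{\gamma^i}T^i(v)$ (respectively $|a|^{\gamma^i}T^i(v)$, or $a^{\gamma^i}T^i(v)$ for $a>0$) for every admissible scalar $a$, and, specializing to an eigenvector, $T^i(v)=r_iv$ where $r_0=1$ and $r_{i+1}=r_i^{\gamma}\lambda$ (respectively $|r_i|^{\gamma}\lambda$). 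Throughout I write $p(x)=\sum_{i=0}^m c_ix^i$ for a vanishing polynomial of $T$ with $c_m\neq0$ (e.g.\ its minimal polynomial), so $\sum_{i=0}^m c_iT^i(u)=0$ for all $u\in V$. The main obstacle is exactly this nonlinearity: each flavour of homogeneity and each range of $\gamma$ behaves differently, and the small-field degeneracies (over $|F|=2$, $1$- and $0$-homogeneity are vacuous) must be carved out by hand.

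\emph{Part 1.} Since $T^k=0$, $x^k$ is a vanishing polynomial, so the minimal polynomial $p$ divides $x^k$ and hence $p=x^l$ for some $1\le l\le k$; thus $0$ is its only root. For the eigenvalues, suppose $v$ is an eigenvector with $\lambda\neq0$. The recursion $r_{i+1}=r_i^{\gamma}\lambda$ (or $|r_i|^{\gamma}\lambda$) with $r_0=1$ gives $r_i\neq0$ for every $i$ (the factor $r_i^{\gamma}$ — which is $1$ when $\gamma=0$ — is nonzero, as is $\lambda$), so $T^k(v)=r_kv\neq0$, contradicting $T^k=0$. Hence every eigenvalue of $T$ is $0$.

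\emph{Part 2.} Evaluate $p(T)=0$ at $tv$ for an admissible scalar $t$. Using the iteration identity, this reads $\sum_{i=0}^m c_i\,s^{\gamma^i}\,T^i(v)=0$, where $s=t$ ranges over $F\setminus\{0\}$ in the vanilla case and $s\in(0,\infty)$ in the absolute and positive cases (with $s=|t|$, resp.\ $s=t$). Because $\gamma\notin\{0,1\}$, the exponents $\gamma^0,\gamma^1,\dots,\gamma^m$ are pairwise distinct, so — working coordinatewise in the finite-dimensional span of the vectors $c_iT^i(v)$, and using that a nonzero polynomial over an infinite field has finitely many roots (vanilla) or that the power functions $s\mapsto s^{\gamma^i}$ with distinct real exponents are linearly independent on $(0,\infty)$ (absolute/positive) — each coefficient $c_iT^i(v)$ vanishes. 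In particular $c_mT^m(v)=0$, and $c_m\neq0$ forces $T^m(v)=0$; since $v$ was arbitrary, $T^m=0$, contradicting non-nilpotence.

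\emph{Parts 3 and 4.} Here I apply the eigenvector computation directly. For $\gamma=1$ and $\lambda\neq0$ the recursion gives $T^i(v)=\lambda^iv$, so $0=p(T)(v)=p(\lambda)v$ and $p(\lambda)=0$; for $\lambda=0$ with $|F|>2$ there is $a\notin\{0,1\}$ with $a^1\neq1$, so $T(0)=0$ by the observation following Definition~\ref{def:k-hom}, whence $T^i(v)=0$ for $i\ge1$ and $0=p(T)(v)=c_0v$ gives $p(0)=0$. In the absolutely $1$-homogeneous case the recursion gives $T^i(v)=|\lambda|^{i-1}\lambda v$ for $i\ge1$; evaluating $p(T)=0$ at both $v$ and $-v$ (also an eigenvector, with eigenvalue $-\lambda$) produces a pair of scalar equations that together force $c_0=0$ and $p(|\lambda|)=0$, handling all eigenvalues (the case $\lambda=0$ again via $T(0)=0$). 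For $\gamma=0$: for any admissible $a\neq1$ the recursion gives $T^i(av)=T^i(v)$ for all $i\ge1$, so subtracting $p(T)(v)=0$ from $p(T)(av)=0$ leaves $(a-1)c_0v=0$; such an $a$ exists except in the vanilla case with $|F|=2$ (where $0$-homogeneity is vacuous), so $c_0=p(0)=0$. Then, if $\lambda$ is a feasible nonzero eigenvalue, $T^i(v)=\lambda v$ for $i\ge1$ gives $0=p(T)(v)=\lambda(p(1)-c_0)v=\lambda\,p(1)\,v$, forcing $p(1)=0$, contrary to hypothesis; so $0$ is the only feasible eigenvalue. In the exceptional $|F|=2$ case, a nonzero eigenvalue must be $\lambda=1$, i.e.\ a nonzero fixed point, which by Claim~\ref{cla:0 1 eigenvalues} again forces $p(1)=0$, a contradiction — so $0$ remains the only feasible eigenvalue, though now $p(0)=0$ may genuinely fail.
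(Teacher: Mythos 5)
Your overall mechanism is the same as the paper's: iterate the homogeneity identity to get $T^i(av)=a^{\gamma^i}T^i(v)$ (resp.\ $|a|^{\gamma^i}T^i(v)$), specialize to eigenvectors, and evaluate the vanishing polynomial. Parts 1, 3 (vanilla), and 4 essentially reproduce the paper's computations. Your part 2 is a mild but legitimate variant: where the paper rescales to produce a second monic vanishing polynomial and invokes uniqueness of the minimal polynomial to force $a^{\gamma^j-\gamma^m}=1$ (then counts roots), you conclude directly from the identity $\sum_i c_i s^{\gamma^i}T^i(v)=0$, via coordinates and the fact that distinct exponents give linearly independent powers (finitely many roots over an infinite $F$, resp.\ independence of $s\mapsto s^{\gamma^i}$ on $(0,\infty)$), that $T^m(v)=0$ for all $v$. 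That argument is correct, does not even need minimality of $p$, and yields the contradiction with non-nilpotence more directly.

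There is, however, one genuine shortfall against the stated claim. In part 3, the assertion ``in addition $p(0)=0$'' for an absolutely $1$-homogeneous $T$ is unconditional: it must hold even if $T$ has no eigenvectors at all. Your derivation of $c_0=0$ uses the pair of equations obtained by evaluating $p(T)$ at an eigenvector $v$ and at $-v$, so it only covers operators that possess an eigenvalue. The paper avoids this by taking an \emph{arbitrary} $v\neq 0$ and the scalar $a=-1$: since $T^i(-v)=T^i(v)$ for $i\geq 1$, subtracting $p(T)(v)=0$ from $p(T)(-v)=0$ gives $2c_0v=0$, hence $c_0=0$; your $\pm v$ comparison works verbatim for arbitrary nonzero $v$, so the fix is one line, but as written the claim is not fully proved. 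A second, smaller omission: in part 1 you show every eigenvalue is $0$ but never verify that $0$ \emph{is} an eigenvalue (the statement says the only eigenvalue is $0$); this follows by taking $0\neq T^{l-1}(u)$ when $T^l=0$ with $l$ minimal (or any nonzero vector when $T=0$), as the paper notes via $T^{m-1}(V)$.
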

\begin{proof}
\begin{enumerate}
\item 
Let $p(x)=x^m$ be the minimal polynomial of $T$. As already argued, every $0\neq v\in T^{m-1}(V)$ is an eigenvector with $\lambda=0$. If there were $v\neq 0$ with some eigenvalue $0\neq\lambda\in F$, then it is easily shown by induction that $T^m(v)=\lambda^{\sum_{i=1}^{m-1} \gamma^i}\lambda v$ for a $\gamma$-homogeneous $T$ or $T^m(v)=|\lambda|^{\sum_{i=1}^{m-1} \gamma^i}\lambda v$ for an absolutely $\gamma$-homogeneous $T$. In both cases, since $T^m(v)=0$, we get $\lambda=0$ and a contradiction.
\item
Consider first a $\gamma$-homogeneous $T$ where $\gamma\in\NN$. Since $T$ is not nilpotent, $p=\sum_{i=0}^ma_ix^i$ has some element $a_j\neq 0$ where $j\neq m$. For every $0\neq a\in F$ and $v\in V$, it is easy to see that $T^i(av)=a^{\gamma^i} T^i(v)$, so $p(T(av))=\sum_{i=0}^m a_ia^{\gamma^i}T^i(v)$. We may divide by $a^{\gamma^m}$ and obtain the vanishing polynomial $\sum_{i=0}^m a_ia^{\gamma^i-\gamma^m}T^i(v)$. This polynomial is monic and has the same degree as $p$, so by the uniqueness of the minimal polynomial, $a^{\gamma^j-\gamma^m}=1$. If $\gamma\notin\{0,1\}$, then $a^{\gamma^m-\gamma^j}=1$ has at most $\gamma^m-\gamma^j$ solutions, so in an infinite $F$ we may choose $a\neq 0$ that is not such a solution. This leads to a contradiction.

For positive and absolute $\gamma$-homogeneity, we can choose $a=2$ and use the same argument except that now $a^{\gamma^m-\gamma^j}=1$ cannot not hold by simple observation.
\item 

Let $v\neq 0$ be an eigenvector with eigenvalue $\lambda$ and let $T$ be $1$-homogeneous. For $\lambda\neq 0$, $0 = p(T)(v) = p(\lambda)v$ so $p(\lambda)=0$. For $\lambda=0$, note that there exists $a\in F\setminus \{0,1\}$, so $T(0)=0$, and by part~2 of Claim~\ref{cla:0 1 eigenvalues}, again $p(\lambda)=0$.

If $T$ is absolutely $1$-homogeneous, then for every $0\neq a\in F$ and $v\in V$, 
\begin{align}
    0&=p(T)(av)=a_0av+\sum_{i=1}^ma_i|a|T^i(v)\\
    &=a_0(a-|a|)v+|a|p(T)(v)\\
    &=a_0(a-|a|)v\;.
\end{align}
Taking $v\neq 0$ and $a=-1$, we obtain $a_0=0$, so $p(0)=0$. Now, let $v\neq 0$ be an eigenvector with eigenvalue $\lambda$. If $\lambda=0$, then $p(|\lambda|)=0$, and if $\lambda\neq 0$, then
$0 = p(T)(v) = (\sum_{i=1}^m a_i|\lambda|^{i-1}\lambda)v = \sgn(\lambda)p(|\lambda|)v$, so $p(|\lambda|)=0$, and we are done.

\item 
For all $\gamma$-homogeneity types, it holds that for every $v\in V$ and permissible $0\neq a\in F$,
\begin{align}
    0&=p(T)(av)=a_0av+\sum_{i=1}^m a_iT^i(v)\\
    &=a_0(a-1)v+p(T)(v)\\
    &=a_0(a-1)v\;.
\end{align}
If $v\neq 0$ and $|F|>2$, there is a permissible $a\notin\{0,1\}$, yielding $a_0=0$, so $p(0)=0$. 

If $v$ has eigenvalue $\lambda\neq 0$ ($\lambda>0$ for positive homogeneity), then $0=p(T)(v)=\sum_{i=1}^m a_i\lambda v +a_0v=p(1)\lambda v+a_0(1-\lambda)v=p(1)\lambda v$. The last equality holds since either $a_0=0$ or $|F|=2$, so $\lambda=1$. Thus, $p(1)=0$, and the result follows. 
\end{enumerate}
\end{proof}

Thus, for (absolutely) $1$-homogeneous operators, if a vanishing polynomial exists, its roots must contain the eigenvalues (their absolute value), under proper conditions. However, not all the roots must be eigenvalues. Consider, for example, $T:\RR^2\rightarrow\RR^2$, defined (using polar coordinates) as 
\begin{align}
    T((r,\theta)) &= \begin{cases}
      (r,\theta+\frac{\pi}{2}), &\hspace{-4pt} \theta\in[0,\frac{\pi}{2})\cup [\pi,\frac{3\pi}{2}) \\
      (r,\theta-\frac{\pi}{2}), &\hspace{-4pt} \text{otherwise.}
    \end{cases}
\end{align}
This operator is $1$-homogeneous and has a vanishing (and minimal) polynomial $x^2-1$, whose roots are $\pm1$. Nevertheless, it has no eigenvectors at all. Another example is $T(v)=|v|$ with $V=\RR$, which is absolutely $1$-homogeneous with a minimal polynomial $x^2-x$, whose roots are $\{0,1\}$, but only the eigenvalue $1$ has an eigenvector.

An example of a positive $0$-homogeneous operator is $T(v)=v/\|v\|$ (where $T(0)=0$) in $\RR^n$. Every $v\neq 0$ is an eigenvector with $\lambda=1/\|v\|$, yielding every eigenvalue in $(0,\infty)$. The minimal polynomial of $T$ is $x^2-x$, so $p(1)=0$. By part 4 of Claim~\ref{cla:homgeneous eigenvalues} we cannot limit the eigenvalues, and indeed, they attain infinitely many values.

 \section{Generalized Inverses of Nonlinear Endofunctions}
 It is natural to consider the Drazin inverse as a generalized inverse for the setting of endofunctions over any set $V$. Of course, $\{1,2\}$-inverses and the MP inverse may be applied to the special case $V=W$ exactly as they are in general. 
 
 The next theorem gives a necessary and sufficient condition for the existence of the Drazin inverse along with an exact formula. We will need a small lemma first.
 \begin{lemma}\label{lem:weak drazin injective}
 Let $T\in OP(V)$ satisfy that for some integer $k\geq 0$, there exists $G\in OP(V)$ s.t. $GT^{k+1}=T^k$. Then $S = T|_{T^k(V)}$ is injective. 
 \end{lemma}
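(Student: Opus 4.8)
The plan is a short direct verification of injectivity. Suppose $u_1, u_2 \in T^k(V)$ satisfy $S(u_1) = S(u_2)$, that is, $T(u_1) = T(u_2)$; the goal is to conclude $u_1 = u_2$. Since each $u_i$ lies in the image of $T^k$, fix $v_1, v_2 \in V$ with $u_i = T^k(v_i)$ for $i = 1, 2$. Then $T(u_i) = T(T^k(v_i)) = T^{k+1}(v_i)$, so the assumption gives $T^{k+1}(v_1) = T^{k+1}(v_2)$ as elements of $V$.

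Now apply $G$ to both sides: $G(T^{k+1}(v_1)) = G(T^{k+1}(v_2))$. The hypothesis $G T^{k+1} = T^k$ is an identity of operators in $OP(V)$, hence holds pointwise, so the left-hand side equals $T^k(v_1) = u_1$ and the right-hand side equals $T^k(v_2) = u_2$. Therefore $u_1 = u_2$, and $S$ is injective. The argument also covers the edge case $k = 0$, where $T^0 = I$ and $T^k(V) = V$, so that the hypothesis $GT = I$ directly exhibits a left inverse for $T = S$.

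I expect no real obstacle here; the only point requiring a moment's care is that the operator identity $G T^{k+1} = T^k$ must be read as ``$G(T^{k+1}(v)) = T^k(v)$ for every $v \in V$'', which is precisely what is used after substituting $v = v_1$ and $v = v_2$. In particular, no vector-space structure on $V$ is needed; the statement and this proof are purely set-theoretic.
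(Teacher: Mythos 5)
Your proof is correct and is essentially identical to the paper's: both write the two points of $T^k(V)$ as $T^k$ of preimages, observe that equal images under $S$ give equal values of $T^{k+1}$, and apply the pointwise identity $GT^{k+1}=T^k$ to recover the original points. Your added remarks about the $k=0$ case and the purely set-theoretic nature of the argument are accurate but not a substantive departure.
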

 \begin{proof}
 Suppose that $v_1,v_2\in T^k(V)$ and $S(v_1)=S(v_2)=v$. There are $u_1,u_2\in V$ s.t. $v_1=T^k(u_1)$ and $v_2=T^k(u_2)$, and thus $T^{k+1}(u_1)=T^{k+1}(u_2)=v$. It holds that 
$G(v)=GT^{k+1}(u_1)=T^k(u_1)=v_1$ and $G(v)=GT^{k+1}(u_2)=T^k(u_2)=v_2$, so $v_1=v_2$.
 \end{proof}

  \begin{theorem}\label{thm:drazin characterization main}
 If $T\in OP(V)$, then $T$ has a Drazin inverse iff there is $k\in\NN$ s.t. $S:T^k(V)\rightarrow T^k(V)$, defined as $S=T|_{T^k(V)}$, is bijective. If the condition holds, then $\drinv{T} = S^{-(k+1)}T^k$.
 \end{theorem}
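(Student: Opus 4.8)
The plan is to establish the equivalence in both directions, and to obtain the explicit formula $\drinv{T}=S^{-(k+1)}T^k$ as a byproduct of the ``if'' direction, invoking the uniqueness of the Drazin inverse (Theorem~\ref{thm:drazin}, part~1). Throughout, note that $T$ always maps the set $T^k(V)$ into itself, since $T(T^k(V))=T^{k+1}(V)\subseteq T^k(V)$, so $S=T|_{T^k(V)}$ is always a well-defined self-map of $T^k(V)$; the substance of ``$S$ is bijective'' is surjectivity (equivalently $T^{k+1}(V)=T^k(V)$) together with injectivity.

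For the ``only if'' direction I would suppose $\drinv{T}$ exists, let $q=\drind{T}$ be its index so that $T^{q+1}\drinv{T}=T^q$ (Theorem~\ref{thm:drazin}, part~3), and take $k=q$. Since $\drinv{T}$ commutes with $T$ by D5, it commutes with every power of $T$, so $\drinv{T}T^{k+1}=T^{k+1}\drinv{T}=T^k$ as operators on $V$. Then $S=T|_{T^k(V)}$ is onto $T^k(V)$, because $T^k=T^{k+1}\drinv{T}$ gives $T^k(V)\subseteq T^{k+1}(V)$ and hence $T^{k+1}(V)=T^k(V)$; and $S$ is injective by Lemma~\ref{lem:weak drazin injective} applied with $G=\drinv{T}$ and this $k$, since $\drinv{T}T^{k+1}=T^k$. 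Thus $S$ is a bijection of $T^k(V)$.

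For the ``if'' direction I would take $k$ with $S=T|_{T^k(V)}$ a bijection of $T^k(V)$ (we may assume $k\ge1$; if $T|_V$ itself is bijective, $T$ is invertible and the claim is Theorem~\ref{thm:drazin}, part~2), and set $G:=S^{-(k+1)}\circ T^k$, which is a genuine operator $V\to T^k(V)\subseteq V$ because $S^{-1}$ is a self-bijection of $T^k(V)$. The engine of the proof is that $T$ stabilizes $T^k(V)$ and coincides there with $S$; from this I would record the two operator identities on $V$
\begin{align}
T^{k+j} &= S^{\,j}\circ T^k \qquad (j\ge 0),\\
T^k\circ S^{-k}\circ T^k &= T^k,
\end{align}
the first by induction on $j$, the second because $T^k$ restricted to $T^k(V)$ acts as $S^k$ and $S^k\circ S^{-k}$ is the identity there. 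Using these, a short computation gives $TG=GT=S^{-k}\circ T^k$, so D5 holds; then $GTG=(S^{-(k+1)}\circ T^k)\circ(S^{-k}\circ T^k)=S^{-(k+1)}\circ T^k=G$ by the second identity, so MP2 holds; and $T^kGT=T^k\circ S^{-(k+1)}\circ T^{k+1}=T^k\circ S^{-k}\circ T^k=T^k$, so MP1$^k$ holds. Hence $G$ is a $\{1^k,2,5\}$-inverse of $T$, i.e.\ a Drazin inverse, and by uniqueness $\drinv{T}=G=S^{-(k+1)}T^k$.

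I expect the main obstacle to be bookkeeping rather than ideas: composition in $OP(V)$ is neither commutative nor left distributive, so every manipulation above is legitimate only because it pushes $T$ (or $T^k$, or $S^{-j}$) across the $T$-invariant set $T^k(V)$, on which $T$ coincides with $S$ and all powers of $S$ and $S^{-1}$ behave like a group. The care needed is to verify that each intermediate image lands inside $T^k(V)$ before invoking an identity valid only there; once the two displayed identities are in hand, checking the three Drazin axioms is mechanical, and pinning down $k$ in the ``only if'' direction is immediate from Theorem~\ref{thm:drazin}.
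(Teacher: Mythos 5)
Your proposal is correct and follows essentially the same route as the paper: the same candidate $G=S^{-(k+1)}T^k$, the same verification of D5, MP2, and MP1 by repeatedly using that $T$ restricted to the invariant set $T^k(V)$ acts as the bijection $S$, and the same use of Lemma~\ref{lem:weak drazin injective} (plus $T^k(V)=T^{k+1}(V)$) for the converse. Your extra bookkeeping identities and the separate treatment of $k=0$ are just more explicit versions of steps the paper leaves terse.
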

 \begin{proof}
If $\drinv{T}$ exists, then for some positive $k$, $T^k=T^{k+1}\drinv{T}$. Thus, $T^k(V)=(T^{k+1}\drinv{T})(V)\subseteq T^{k+1}(V)$, but since we always have $T^k(V) \supseteq T^{k+1}(V)$, it holds that $T^k(V)=T^{k+1}(V)$. Thus $S$ is surjective. Now, since $\drinv{T}T^{k+1}=T^k$, then by Lemma~\ref{lem:weak drazin injective}, $S$ is injective, and being also surjective, it is a bijection.

In the other direction, assume that for some $k\geq 0$, $S$ is bijective. Thus, $S$ has a two-sided inverse. We will define $G=S^{-(k+1)}T^k$ and show that it is a Drazin inverse. We have $GT=S^{-(k+1)}T^{k+1}=S^{-k}T^k=TG$, so D5 holds. MP1$^k$ is true since $T^kGT=T^{k+1}G=T^k$, implying MP1$^{k+1}$. MP2 holds since $GTG=T^{k+1}S^{-2k-2}T^k=S^{-(k+1)}T^k=G$, so $G$ is a $\{1^{k+1},2,5\}$-inverse, and the proof is complete.
 \end{proof}
 The exact setting for the existence of a Drazin inverse is shown in Figure~\ref{fig:drazinA}. While it holds for any operator that $V=T^0(V)\supseteq T^1(V) \supseteq \ldots$, Drazin-invertible operators are those for which the containment becomes an equality at some point, and furthermore, $T$ becomes bijective (not only surjective).
\ignore{
}

\begin{figure*}[ht]
	\centering
	\begin{subfigure}{0.5\textwidth}
		\includegraphics[scale=0.35,trim={-3cm, 0cm, 12cm, 0cm},clip]{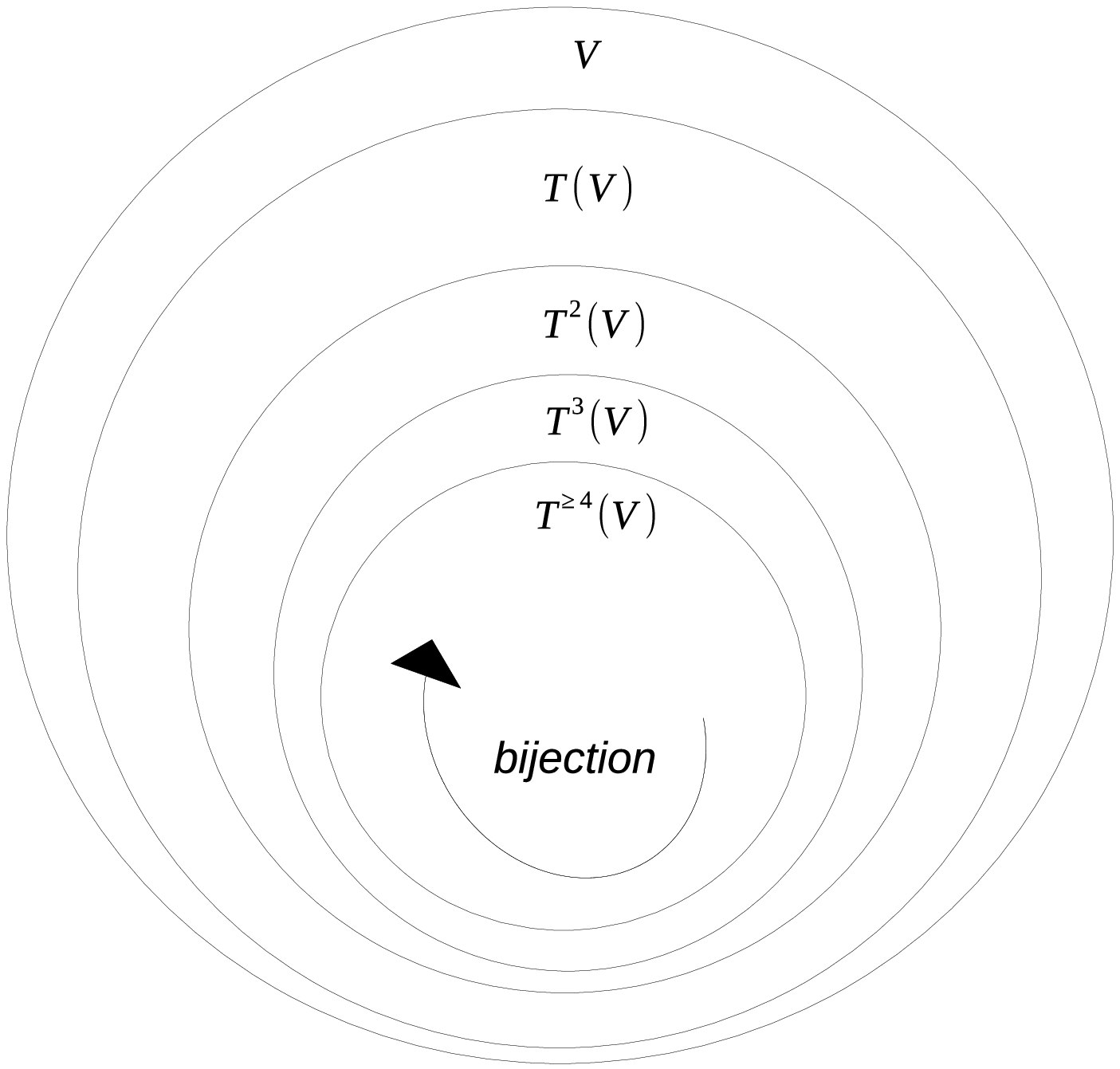}
		\caption{A Drazin-invertible operator}
		\label{fig:drazinA}
	\end{subfigure}
	\begin{subfigure}{0.45\linewidth}
		\includegraphics[scale=0.35,trim={-2cm, 0cm, 12cm, 0cm},clip]{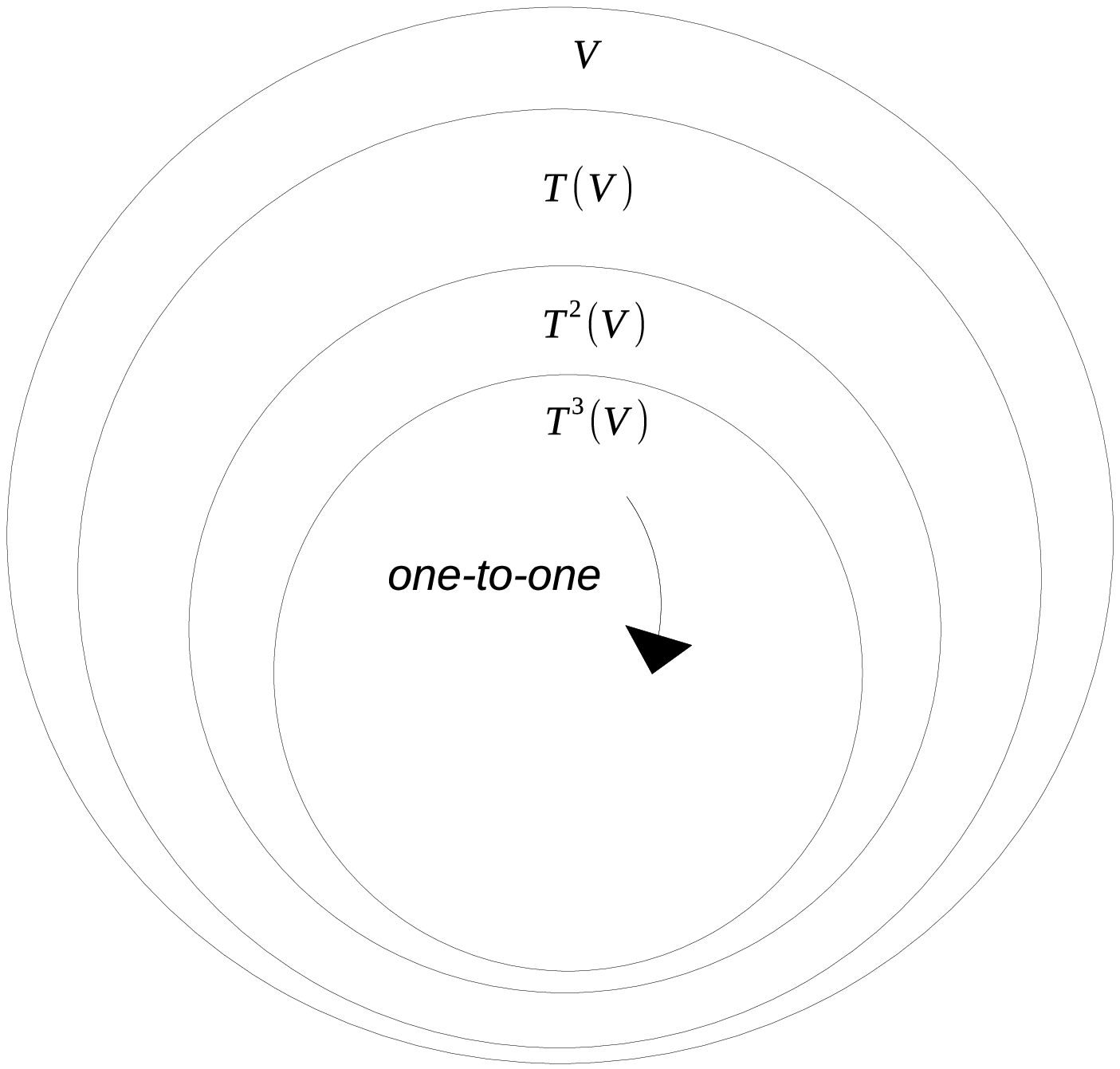}
		\caption{A left-Drazin-invertible operator}
		\label{fig:drazinB}
	\end{subfigure}	
	\caption{Necessary and sufficient settings for Drazin-invertibility and left-Drazin invertibility. For a Drazin-invertible operator, the operator $T$ becomes bijective after a finite number of iterations (here, $k=4$).  The Drazin inverse is then $\drinv{T}=(T|_{T^k(V)})^{-(k+1)}T^k$. For a left-Drazin-invertible operator, the setting is looser: the operator becomes \textit{injective} after $k$ iterations (here, $k=3$). An inverse $\drlinv{T}$ with parameter $k$ (for $k\geq 1$) may be then defined as $(T|_{T^k(V)})^{-1}$  on $T^{k+1}(V)$ and arbitrarily elsewhere.}
	\label{fig:drazin and left drazin}
\end{figure*}
 
\subsection{Some Example Scenarios}
The following scenarios serve to further investigate and highlight the concept and properties of the Drazin inverse for operators.

\textbf{An operator containing a loop.} Let $T\in OP(V)$ satisfy $T^n=T^k$, where $V$ is some set and $k,n\in \NN$ satisfy $n>k\geq 0$. This holds for any operator on a finite $V$ (see Claim~\ref{cla: gen for finite num vals}) and all examples mentioned in Subsection~\ref{subsubsec:examples by degree}, such as idempotent operators and nested projections. 

All these operators have $\drinv{T}=T^{(n-k)(k+1)-1}$. To see this, we can use part~8 of Theorem~\ref{thm:drazin}, with $a=b=T^{n-k-1}$ and $M=p=q=k$. Assuming that $k>0$, we obtain $\drinv{T}=T^kT^{(n-k-1)(k+1)}=T^{(n-k)(k+1)-1}$. For $k=0$, $T^{n-1}$ is clearly the true inverse of $T$, and thus also the Drazin inverse.
 
\textbf{The bijection of $\boldsymbol{T^k(V)}$ is a unitary matrix.} If $V=\RR^n$, $T^k(V)=B(0,r)$, and $T|_{T^k(V)}$ is a unitary matrix, then Theorem~\ref{thm:drazin characterization main} implies that $T$ is Drazin-invertible. This unitary matrix has a vanishing polynomial $p$ of degree at most $n$, which can be non-trivial (even a rotation in $\RR^2$ does not generally have a vanishing polynomial of the form $x^m-x^k=0$). The polynomial $p(x)x^k$ vanishes in $T$.

\textbf{The bijection of $\boldsymbol{T^k(V)}$ is arbitrary.}
Using the previous example except that the bijection is completely arbitrary, we cannot expect that the bijection will vanish for some polynomial. Drazin-invertibility may thus exist without necessarily having a vanishing polynomial.
\ignore{}

\textbf{A simple operator without a Drazin inverse.}
Let $V=[0,1]$, and let $T:V\rightarrow V$ be defined as $T(v)=v/2$. Since $T^k(V)\supsetneq T^{k+1}(V)$ for every $k$, then by Theorem~\ref{thm:drazin characterization main}, a Drazin inverse does not exist. In contrast, the pseudo-inverse of $T$ with the Euclidean norm is easily seen to be $\min\{2v,1\}$. Note that this linear operator has the vanishing polynomial $x-1/2=0$.

\subsection{The Left-Drazin Inverse}
The examples given in the previous subsection showed that an operator with a vanishing polynomial does not necessarily have a Drazin inverse. For some very simple cases, such as $T^n=T^k$, a Drazin inverse can be shown to be polynomial in the operator. In general, however, this cannot be expected. The D5 requirement means that an operator and its Drazin inverse should commute. While $T$ and a polynomial in $T$ commute for linear operators, they do not commute in general. 

This is a bit counter-intuitive, since for an operator with a vanishing polynomial $q(x)x^{k+1}+a_kx^k$, where $a_k\neq 0$, the expression  $-a_k^{-1}q(T)$ is ``inverse-like''. Specifically, $-a_k^{-1}q(T)\circ T^{k+1}=T^k$. Together with the fact that the Drazin inverse was shown to be undefined for an elementary case, this motivates a possibly weaker but more permissive generalized inverse definition, given next.
 
 \begin{definition}[Left-Drazin inverse]
 An operator $T\in OP(V)$ has a left-Drazin inverse $G\in OP(V)$ if there is a positive integer $m$ s.t. $GT^{m+1}=T^m$. The minimal such integer is referred to as the left index of $T$.
 \end{definition}
 Having a left-Drazin inverse is referred to by Drazin as ``left $\pi$-regularity'' and is in turn inspired by Azumaya \citep{azumaya1954strongly}. Clearly, the Drazin inverse of $T$, where it exists, is also a left-Drazin inverse of $T$ (see Theorem~\ref{thm:drazin} part~8). In addition, if a parameter $m$ satisfies the definition then so does any $m'\geq m$.
 
 The next theorem characterizes the left-Drazin inverse in the context of endofunctions and shows that it exists as a polynomial in the operator whenever a vanishing polynomial exists.
 
 \begin{theorem}\label{thm:left drazin}
 Let $T\in OP(V)$.
 \begin{enumerate}
 \item
  It holds that $T$ has a left-Drazin inverse $\drlinv{T}$ iff there is $k\in \NN$ s.t. $S:T^k(V)\rightarrow T^{k+1}(V)$, defined as $S=T|_{T^k(V)}$, is injective. If the condition holds and $k\geq 1$, then a left-Drazin inverse with parameter $k$ exists, must coincide with $S^{-1}$ on $T^{k+1}(V)$, and may be arbitrary elsewhere.
  \item 
  Let $V$ be further a vector space and $p=\sum_{i=0}^m a_ix^i$ a vanishing polynomial of $T$, where $k$ is smallest index s.t. $a_k\neq 0$. Then $-a_k^{-1}\sum_{i=k+1}^m a_iT^{i-k-1}$ is a left-Drazin inverse of $T$ with parameter $\max\{k,1\}$.
  \end{enumerate}
 \end{theorem}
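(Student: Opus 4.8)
The plan is to handle the two parts separately, establishing part~1 by a direct analysis of the iterated images of $T$, and part~2 by exhibiting the explicit polynomial and verifying it satisfies the defining identity of a left-Drazin inverse.

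For part~1, I would argue both directions. For the forward direction, suppose $\drlinv{T}$ is a left-Drazin inverse with parameter $m$, so $\drlinv{T}\,T^{m+1}=T^m$. Taking $k=m$, Lemma~\ref{lem:weak drazin injective} (with $G=\drlinv{T}$) immediately gives that $S=T|_{T^k(V)}$ is injective, and its codomain is $T(T^k(V))=T^{k+1}(V)$, as required. For the converse, assume $S=T|_{T^k(V)}$ is injective for some $k\in\NN$; if $k=0$ then $T$ itself is injective and $T^{-1}$ on $T(V)$ (extended arbitrarily) works with parameter~$1$, so assume $k\geq 1$. Since $S:T^k(V)\to T^{k+1}(V)$ is a bijection onto its image $T^{k+1}(V)$, define $G$ on $T^{k+1}(V)$ as $S^{-1}$ and arbitrarily on $V\setminus T^{k+1}(V)$. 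Then for any $v\in V$, $T^{k+1}(v)\in T^{k+1}(V)$ is $S$ applied to $T^k(v)\in T^k(V)$, so $G\,T^{k+1}(v)=S^{-1}S(T^k(v))=T^k(v)$; hence $G\,T^{k+1}=T^k$, i.e., $G$ is a left-Drazin inverse with parameter~$k$. Uniqueness on $T^{k+1}(V)$: any left-Drazin inverse $G'$ with parameter $k$ satisfies $G'\,T^{k+1}=T^k$, and since every element of $T^{k+1}(V)$ has the form $T^{k+1}(v)=S(T^k(v))$, we get $G'(S(T^k(v)))=T^k(v)=S^{-1}(S(T^k(v)))$, forcing $G'=S^{-1}$ there; the values on the complement are unconstrained by the single identity.

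For part~2, write $p(x)=\sum_{i=0}^m a_ix^i$ with $k$ the smallest index with $a_k\neq 0$, so $p(x)=x^k\bigl(a_k + \sum_{i=k+1}^m a_i x^{i-k}\bigr)$ and $p(T)=0$. Set $G=-a_k^{-1}\sum_{i=k+1}^m a_i T^{i-k-1}$ and $\ell=\max\{k,1\}$. Using right distributivity (valid in $OP(V)$ by the properties listed for vector spaces) and the fact that powers of $T$ commute with each other, I would compute
\begin{align*}
G\circ T^{\ell+1} &= -a_k^{-1}\sum_{i=k+1}^m a_i T^{i-k-1}\circ T^{\ell+1}\\
&= -a_k^{-1}\sum_{i=k+1}^m a_i T^{\,i+\ell-k}\\
&= -a_k^{-1}\,T^{\ell-k}\circ\Bigl(\sum_{i=k+1}^m a_i T^{i}\Bigr)\\
&= -a_k^{-1}\,T^{\ell-k}\circ\bigl(p(T)-a_kT^k\bigr)\\
&= -a_k^{-1}\,T^{\ell-k}\circ(-a_kT^k) = T^{\ell}\,,
\end{align*}
where $p(T)=0$ was used and $\ell-k\geq 0$ makes $T^{\ell-k}$ a legitimate nonnegative power (equal to $I$ when $k\geq1$). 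Thus $G\,T^{\ell+1}=T^{\ell}$, so $G$ is a left-Drazin inverse with parameter $\ell=\max\{k,1\}$.

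The main obstacle is bookkeeping in the non-ring setting: since $OP(V)$ is only right-distributive and lacks left distributivity, I must be careful to factor $T^{\ell-k}$ out on the \emph{left} of the sum $\sum_{i}a_iT^i$ (which is fine, since $T^{\ell-k}\circ(A+B)=T^{\ell-k}\circ A+T^{\ell-k}\circ B$ requires only that $T^{\ell-k}$ distribute over a sum of operators — and this does hold because evaluating at $v$ gives $T^{\ell-k}((A+B)(v))$, and $T^{\ell-k}$ is applied to a single argument, not split; more carefully, the identity $q(T)\circ T^i$ manipulations are exactly those already justified in the displayed computation of $(pq)(T)$ in the ``Vanishing Polynomials'' section). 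Once that associativity/factoring step is pinned down using the previously established identity $(pq)(T)=\sum_i b_i p(T)\circ T^i$, the rest is routine. I would also note explicitly that the claim ``$k$ smallest with $a_k\neq 0$'' combined with $a_k\neq0$ is what lets us divide by $a_k^{-1}$, and that when $k=0$ the bump to $\ell=1$ is needed precisely because the left-Drazin definition requires a \emph{positive} integer parameter.
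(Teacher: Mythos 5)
Your part~1 argument is correct and essentially the paper's: the forward direction is Lemma~\ref{lem:weak drazin injective} applied to $G=\drlinv{T}$, and the converse defines the inverse as $S^{-1}$ on $T^{k+1}(V)$ and arbitrarily elsewhere, with the same treatment of $k=0$ and the same immediate uniqueness-on-$T^{k+1}(V)$ observation.

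Part~2, however, has a genuine gap in the case $k=0$. Your chain factors $T^{\ell-k}$ out on the \emph{left} of the sum, i.e., uses $T^{\ell-k}\circ\bigl(\sum_{i=k+1}^m a_iT^i\bigr)=\sum_{i=k+1}^m a_i\,T^{\ell-k}\circ T^i$, and the final step additionally pulls the scalar $-a_k$ through $T^{\ell-k}$. Both are instances of left distributivity/homogeneity of composition, which is precisely what fails in $OP(V)$ for nonlinear $T$ (the paper stresses $T_1\circ(T_2+T_3)\neq T_1\circ T_2+T_1\circ T_3$ in general). Your parenthetical justification does not rescue this: evaluating at $v$, one side is $T^{\ell-k}\bigl(\sum_i a_iT^i(v)\bigr)$ and the other is $\sum_i a_iT^{\ell-k}(T^i(v))$, and a nonlinear $T^{\ell-k}$ need not split over that sum; the $(pq)(T)$ identity you invoke factors the polynomial sum on the \emph{left} of $T^i$ (right distributivity), which is the opposite situation. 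When $k\geq 1$ you have $\ell-k=0$, so $T^{\ell-k}=I$ and your computation is fine; the failure occurs exactly at $k=0$, where $\ell-k=1$ and the possibly nonlinear $T$ sits on the left. The repair is to factor on the right, as the paper does: from $p(T)=0$ and right distributivity, $a_kT^k=-\bigl(\sum_{i=k+1}^m a_iT^{i-k-1}\bigr)\circ T^{k+1}$, hence $G\circ T^{k+1}=T^k$ for every $k$; for $k=0$ this reads $G\circ T=I$, and composing both sides with $T$ on the right (which needs only associativity and $I\circ T=T$) gives $G\circ T^2=T$, i.e., parameter $\max\{k,1\}=1$. Equivalently, replace your left factoring by $\sum_{i=k+1}^m a_iT^{i+\ell-k}=\bigl(\sum_{i=k+1}^m a_iT^i\bigr)\circ T^{\ell-k}=\bigl(p(T)-a_kT^k\bigr)\circ T^{\ell-k}=-a_kT^{\ell}$, using $0\circ T^{\ell-k}=0$ and right distributivity.
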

\begin{proof}
  \begin{enumerate}
  \item
  The existence of $\drlinv{T}$ implies that $S$ is injective (for some $k$) immediately by Lemma~\ref{lem:weak drazin injective}. In the other direction, we may define $\drlinv{T}$ as $S^{-1}$ on $T^{k+1}(V)$ and arbitrarily elsewhere. It then holds that $\drlinv{T}T^{k+1} = T^k$ as required (for $k=0$, note that also $\drlinv{T}T^{k+2} = T^{k+1}$). The last claim is immediate.
  \item 
  Since $T^k = \left(-a_k^{-1}\sum_{i=k+1}^m a_iT^{i-k-1}\right)T^{k+1}$, the claim follows for $k>0$. For $k=0$ we may multiply both sides by $T$ on the right.\qedhere
  \end{enumerate}
\end{proof}
The characterization of left-Drazin invertibility is depicted in Figure~\ref{fig:drazinB}. Part~1 of the theorem directly yields the following.
\begin{corollary}
If $T\in OP(V)$ is bijective, then $\drlinv{T}=T^{-1}$.
\end{corollary}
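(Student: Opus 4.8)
The plan is to derive the corollary as an immediate consequence of part~1 of Theorem~\ref{thm:left drazin}. First I would observe that a bijective $T$ is in particular surjective, so $T(V)=V$, and by induction $T^{k+1}(V)=T(T^{k}(V))=T(V)=V$ for every $k\in\NN$; hence $T^{k}(V)=V$ for all $k$ (including $k=0$, where $T^{0}=I$). Consequently, for any $k$ the operator $S=T|_{T^{k}(V)}=T|_{V}=T$ is injective, since $T$ is a bijection, so the equivalent condition in part~1 of Theorem~\ref{thm:left drazin} is satisfied.

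Next I would instantiate that theorem with $k=1$ (which meets the requirement $k\geq 1$): a left-Drazin inverse with parameter $1$ exists, it must coincide with $S^{-1}=T^{-1}$ on $T^{k+1}(V)=T^{2}(V)=V$, and it may be arbitrary on $V\setminus T^{k+1}(V)=V\setminus V=\emptyset$. Since the ``arbitrary elsewhere'' set is empty, there is no freedom left, so $T^{-1}$ is in fact the unique left-Drazin inverse, i.e., $\drlinv{T}=T^{-1}$.

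As a sanity check one can also verify this directly, without invoking the theorem: $T^{-1}T^{2}=T=T^{1}$ shows $T^{-1}$ is a left-Drazin inverse with parameter $1$; conversely, if $GT^{m+1}=T^{m}$ for some $m\geq 1$, then composing on the right with $T^{-m}$ and using associativity of composition together with the power rules for a bijective operator gives $GT=I$, whence $G=(GT)T^{-1}=T^{-1}$. There is essentially no obstacle here; the only point requiring a moment's care is noting that the degrees of freedom permitted by part~1 of Theorem~\ref{thm:left drazin} collapse to nothing in this case, because $T^{k+1}(V)=V$, which is exactly what upgrades ``a left-Drazin inverse equals $T^{-1}$ on $T^{k+1}(V)$'' to ``the left-Drazin inverse is $T^{-1}$ everywhere.''
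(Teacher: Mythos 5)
Your proposal is correct and follows essentially the same route as the paper, which simply notes that the corollary is a direct consequence of part~1 of Theorem~\ref{thm:left drazin} (with $T^{k}(V)=V$ for all $k$, so the bijection $S=T$ forces the inverse to be $T^{-1}$ everywhere). Your added direct verification via $GT^{m+1}=T^{m}\Rightarrow G=T^{-1}$ is a harmless extra that also covers left-Drazin inverses of arbitrary parameter $m$, but it is not a different method.
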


Going back to the example of $T(v)=v/2$ on $V=[0,1]$, we have that $T$ is injective on $T^0(V)$, so a left-Drazin inverse exists, which with parameter $1$ is constrained to be $2v$ on $[0,0.25]$. The same conclusion might be reached by a minor adaptation of the proof of part~2 of the above theorem using the vanishing polynomial $p(x)=x-0.5$. 

Note that again by the above theorem, the operator $T(v)=2v\cdot\Ind{v<1/2}+v\cdot\Ind{v\geq 1/2}$ with $V=[0,1]$ does not have a left-Drazin inverse, since it is not injective on $T^k(V)$ for any $k$. The pseudo-inverse of $T$ with the Euclidean norm exists and is $v/2$.

\section{Discussion and Conclusion}
Inversion and pseudo-inversion are essential in machine learning, image processing and data science in general.
In this work we attempt to establish a broad and coherent theory for the generalized inverse of nonlinear operators. From an axiomatic perspective, the first two axioms (MP1, MP2) can be directly extended to a very general setting of the nonlinear case, yielding a set of admissible $\{1,2\}$-inverse operators. It is shown that (MP3, MP4) are requirements which can hold only in the linear case. A procedure to construct a $\{1,2\}$-inverse is given, along with some essential properties. In order to obtain a stricter definition, which fully coincides in the linear setting with the four axioms MP1--4, normed spaces are assumed. The pseudo-inverse is defined through a minimization problem, as well as MP2.
We note that such a minimization can be thought of as the limit of Tikhonov regularization, where the weight of the regularization term tends to zero. This analogy applies in particular to regularized loss minimization (see Appendix~\ref{sec:rlm} for details). We give explicit formulations of the nonlinear pseudo-inverse for several test cases, relevant to learning and to image processing.

Finally, we focus on operators with equal domain and range, and in particular on expressing inverse and generalized inverse by using vanishing polynomials, following the Cayley-Hamilton theorem for linear operators. We show why the Drazin inverse is relevant in this case and suggest a relaxed version, more suitable for obtaining polynomial inverse expressions for nonlinear operators. The use of polynomials allows realizing inversion using only forward applications of the operator, yielding a constructive and possibly efficient way for inverse computation.

\ignore{}

\bmhead{Acknowledgments}
We acknowledge support by grant agreement No. 777826 (NoMADS), by the Israel Science Foundation (Grants No. 534/19 and 1472/23), by the Ministry of Science and Technology (Grant No. 5074/22) and by the Ollendorff Minerva Center.

% 
% 

%\newpage

\begin{appendices}

\section{Additional Claims}\label{sec:AddClaims}
\begin{claim}\label{cla:adjoint means linear}
Let $V$ and $W$ be inner-product spaces over $F$ ($\RR$ or $\CC$). If $T:V\rightarrow W$ has an adjoint operator, then $T$ is linear.
\end{claim}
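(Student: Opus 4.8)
The plan is to exploit the defining identity of the adjoint, $\langle T(v),w\rangle = \langle v, T^*(w)\rangle$ for all $v\in V$ and $w\in W$, together with the sesquilinearity and positive-definiteness of the inner product. Fix $v_1,v_2\in V$ and scalars $a,b\in F$, and set $u = T(av_1+bv_2) - aT(v_1) - bT(v_2)\in W$. The goal is to show $u=0$, which is exactly the additivity-and-homogeneity statement needed for linearity of $T$.

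For an arbitrary $w\in W$, I would compute $\langle u,w\rangle$ by expanding the first summand via the adjoint: $\langle T(av_1+bv_2),w\rangle = \langle av_1+bv_2, T^*(w)\rangle$. Using linearity of the inner product in its first slot, this splits as $a\langle v_1, T^*(w)\rangle + b\langle v_2, T^*(w)\rangle$ (with complex conjugates on $a,b$ under the physics convention for the inner product), and folding the adjoint identity back in turns it into $a\langle T(v_1),w\rangle + b\langle T(v_2),w\rangle = \langle aT(v_1)+bT(v_2),w\rangle$. Hence $\langle u,w\rangle = 0$. Note that this step uses nothing about $T^*$ beyond the adjoint identity itself; in particular $T^*$ need not be assumed linear.

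Since $\langle u,w\rangle = 0$ for every $w\in W$, choosing $w=u$ gives $\|u\|^2 = \langle u,u\rangle = 0$, so $u=0$ by positive-definiteness. As $v_1,v_2,a,b$ were arbitrary, $T$ is linear.

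I do not expect a genuine obstacle here; the only point demanding a little care is the conjugate-linearity convention for the inner product, but the argument closes identically under either convention, because the conjugates introduced when pulling $a,b$ out of $\langle av_1+bv_2, T^*(w)\rangle$ are precisely undone when the same $a,b$ are pulled back out of $\langle aT(v_1)+bT(v_2),w\rangle$.
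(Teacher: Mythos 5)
Your proposal is correct and follows essentially the same route as the paper's proof: expand $\langle T(a_1v_1+a_2v_2)-a_1T(v_1)-a_2T(v_2),\,w\rangle$ via the adjoint identity and linearity of the inner product, conclude it vanishes for all $w$, and then take $w$ equal to the difference vector itself to force it to zero. The remark about the conjugation convention is a fine clarification but does not change the argument.
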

\begin{proof}
For every $v_1,v_2\in V$, $w\in W$, and $a_1,a_2\in F$, it holds that 
\begin{align}
    \langle T(a_1 &v_1+a_2 v_2), w \rangle \\
    &= \langle a_1 v_1+a_2 v_2, T^*(w)\rangle\\
    &=  a_1 \langle v_1, T^*(w)\rangle + a_2 \langle v_2, T^*(w)\rangle \\
    &=  a_1 \langle T(v_1), w\rangle + a_2 \langle T(v_2), w\rangle \\
    &=\langle a_1T(v_1)+a_2 T(v_2), w \rangle\;.
\end{align}
\ignore{
}
Taking $w = T(a_1 v_1+a_2 v_2) - a_1T(v_1) - a_2 T(v_2)$ and rearranging, we obtain that $\langle w,w \rangle=0$, therefore $w=0$, yielding that $T$ has to be linear. 
\end{proof}
\begin{claim}\label{cla:full inv of a bijection}
If $T:V\rightarrow W$ is a bijection, then $T^{-1}$ is its unique pseudo-inverse.
\end{claim}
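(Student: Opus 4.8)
The plan is to verify directly that $T^{-1}$ meets both requirements of Definition~\ref{def:generalized pseudo-inv}, and then observe that the BAS property leaves no freedom, forcing uniqueness.

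First I would check BAS. Fix $w\in W$. Since $T$ is a bijection, there is a unique $v_w\in V$ with $T(v_w)=w$, namely $v_w=T^{-1}(w)$, and for this $v_w$ we have $\|T(v_w)-w\|=0$. Hence $\arg\min_{v'\in V}\{\|T(v')-w\|\}=\{T^{-1}(w)\}$, a singleton, so the inner minimization of $\|v\|$ over this set is trivially attained at $T^{-1}(w)$. Therefore $T^{-1}(w)$ is the only value at $w$ compatible with BAS; in particular $T^{-1}$ satisfies BAS, and any pseudo-inverse $\finv{T}$ of $T$ must agree with $T^{-1}$ at every $w\in W$.

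Next, MP2 holds for $T^{-1}$ since $T^{-1}TT^{-1}=(T^{-1}T)T^{-1}=I\,T^{-1}=T^{-1}$. Thus $T^{-1}$ is a pseudo-inverse of $T$, establishing existence, and by the previous paragraph it is the only one. (Alternatively, existence and uniqueness of a pseudo-inverse on $T(V)=W$ also follow from Lemma~\ref{lem:inverse on the image}, since each $A_w=\arg\min\{\|v\|:v\in T^{-1}(\{w\})\}$ is the singleton $\{T^{-1}(w)\}$.) I do not foresee any real obstacle here; the only point requiring a little care is noting that the outer $\arg\min$ in the BAS condition is taken over a one-element set, which is exactly where bijectivity is used.
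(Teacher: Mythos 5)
Your proof is correct. It takes a slightly different route from the paper's: you establish uniqueness directly from BAS, observing that for a bijection the set $\arg\min_{v'\in V}\{\|T(v')-w\|\}$ is the singleton $\{T^{-1}(w)\}$ (the zero of the norm), so BAS pins down the value of any pseudo-inverse at every $w\in W$, and you then verify MP2 for $T^{-1}$ by direct computation. The paper instead routes the argument through the $\{1,2\}$-inverse machinery: a pseudo-inverse is in particular a $\{1,2\}$-inverse (using that BAS implies MP1), and by Lemma~\ref{lem: equivalent meaning of MP1-2 different spaces} the only $\{1,2\}$-inverse of a bijection is $T^{-1}$; BAS is then checked for $T^{-1}$ exactly as you do. Your version is more self-contained, needing neither Lemma~\ref{lem: equivalent meaning of MP1-2 different spaces} nor the fact that BAS implies MP1, while the paper's version makes explicit how the pseudo-inverse sits inside the $\{1,2\}$-inverse framework developed earlier; your parenthetical appeal to Lemma~\ref{lem:inverse on the image} (with $E=T(V)=W$) is also a legitimate shortcut for existence and uniqueness. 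Both arguments are sound.
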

\begin{proof}
A pseudo-inverse is a $\{1,2\}$-inverse, and by Lemma~\ref{lem: equivalent meaning of MP1-2 different spaces}, $T^{-1}$ is the unique $\{1,2\}$-inverse of $T$. For every $w\in W$, $v=T^{-1}(w)$ is the sole minimizer of $\|T(v)-w\|$, so the BAS property is satisfied by $T^{-1}$ as well.
\end{proof}

The following lemma and theorem are well known. Their statements below extend the original ones to spaces over $\CC$ as well as $\RR$, so the proofs are given for completeness.
\begin{lemma}[\citep{cheney1959proximity}]\label{lem:cheny-goldstein lemma}
Let $C$ be a convex set in a Hilbert space over $\RR$ or $\CC$. If a point $b\in C$ is nearest a point $a$, then $\textup{Re}\langle x-b,b-a \rangle \geq 0$ for every $x\in C$.
\end{lemma}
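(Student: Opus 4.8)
The plan is to exploit convexity to build competitors to the minimizer $b$ and then pass to a limit. Fix an arbitrary $x\in C$ and, for $t\in[0,1]$, put $y_t=(1-t)b+tx=b+t(x-b)$; by convexity of $C$ we have $y_t\in C$ for all such $t$. Since $b$ is a point of $C$ nearest to $a$, the inequality $\|b-a\|\le\|y_t-a\|$ holds, and hence $\|b-a\|^2\le\|y_t-a\|^2$ for every $t\in[0,1]$.

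The main computation is to expand $\|y_t-a\|^2$. Writing $y_t-a=(b-a)+t(x-b)$ and using that for a \emph{real} scalar $t$ one has $\|u+tv\|^2=\|u\|^2+2t\,\textup{Re}\langle u,v\rangle+t^2\|v\|^2$ in any Hilbert space over $\RR$ or $\CC$ (the two cross terms $\langle u,tv\rangle+\langle tv,u\rangle$ add up to $2t\,\textup{Re}\langle u,v\rangle$ exactly because $t$ is real), the inequality becomes
\begin{equation}
0\le 2t\,\textup{Re}\langle b-a,\,x-b\rangle+t^2\|x-b\|^2,\qquad t\in(0,1].
\end{equation}
Dividing by $t>0$ and letting $t\to 0^+$ yields $0\le\textup{Re}\langle b-a,\,x-b\rangle$.

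It remains to match the asserted form. Since the inner product is conjugate-symmetric, $\textup{Re}\langle u,v\rangle=\textup{Re}\langle v,u\rangle$, so $0\le\textup{Re}\langle b-a,x-b\rangle=\textup{Re}\langle x-b,\,b-a\rangle$, which is precisely the claim. I do not anticipate a genuine obstacle; the only place demanding care is bookkeeping the conjugate-linearity of the complex inner product in the expansion, which is exactly why the interpolation parameter $t$ is kept real — had $t$ been complex, the cross term would not collapse to a real multiple of $\textup{Re}\langle b-a,x-b\rangle$, and the limiting argument would break.
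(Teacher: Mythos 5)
Your proof is correct and follows essentially the same route as the paper's: form the convex combination $b+t(x-b)$, compare squared distances to $a$, expand the cross term with real $t$, and conclude for small $t$ (your division-and-limit step is just the explicit version of the paper's contradiction for small $t$). The handling of conjugate symmetry to match the stated order of arguments is fine.
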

\begin{proof}
For every $t\in [0,1]$, $tx+(1-t)b\in C$, and thus $\|a-b\|^2\leq \|a-tx-(1-t)b\|^2$. Now,
\ignore{

}
\begin{align}
\|&a-tx-(1-t)b\|^2 \\
&= \|t(b-x)+(a-b)\|^2 \\
&= \|t(x-b)+(b-a)\|^2 \\  
&= t^2\|x-b\|^2+2t\textup{Re}\langle x-b,b-a \rangle\\
&\hspace{10pt} + \|a-b\|^2\;,    
\end{align}
\ignore{

}
therefore, $t^2\|b-x\|^2+2t\textup{Re}\langle x-b,b-a \rangle\geq 0$. If $\textup{Re}\langle x-b,b-a \rangle < 0$, then this inequality would be violated for a small enough $t$, so we are done.
\end{proof}

\begin{theorem}[\citep{cheney1959proximity}]\label{thm:projection is continuous}
The projection $P$ onto a nonempty closed convex set $C$ in a Hilbert space over $\RR$ or $\CC$ satisfies the Lipschitz condition $\|P(x)-P(y)\|\leq \|x-y\|$.
\end{theorem}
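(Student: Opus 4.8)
The plan is to apply Lemma~\ref{lem:cheny-goldstein lemma} twice and combine the resulting inequalities. First I would fix $x,y$ in the Hilbert space and write $p=P(x)$, $q=P(y)$, both of which lie in $C$ and are well-defined since $C$ is a nonempty closed convex set (hence Chebyshev). If $p=q$ the claim is trivial, so I would assume $p\neq q$.

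Next I would invoke the lemma in two ways. Since $p\in C$ is nearest to $x$ and $q\in C$, the lemma gives $\textup{Re}\langle q-p,\,p-x\rangle\geq 0$. Symmetrically, since $q\in C$ is nearest to $y$ and $p\in C$, it gives $\textup{Re}\langle p-q,\,q-y\rangle\geq 0$. Adding these two inequalities and using linearity of the inner product in the first argument (together with $\textup{Re}(\cdot)$ being additive), I get
\begin{align*}
0 &\leq \textup{Re}\langle q-p,\,(p-x)-(q-y)\rangle \\
&= \textup{Re}\langle q-p,\,(p-q)+(y-x)\rangle \\
&= -\|p-q\|^2 + \textup{Re}\langle q-p,\,y-x\rangle\;.
\end{align*}
Hence $\|p-q\|^2 \leq \textup{Re}\langle q-p,\,y-x\rangle$.

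Finally I would bound the right-hand side: $\textup{Re}\langle q-p,\,y-x\rangle \leq |\langle q-p,\,y-x\rangle| \leq \|q-p\|\,\|y-x\|$ by the Cauchy–Schwarz inequality. Combining with the previous display, $\|p-q\|^2 \leq \|p-q\|\,\|x-y\|$, and dividing by $\|p-q\|>0$ yields $\|P(x)-P(y)\| = \|p-q\| \leq \|x-y\|$, as required.

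There is no real obstacle here: the only point needing mild care is that over $\CC$ one must work with $\textup{Re}\langle\cdot,\cdot\rangle$ throughout, which is exactly the form in which Lemma~\ref{lem:cheny-goldstein lemma} is stated, and the Cauchy–Schwarz step still applies since $\textup{Re}\,z\leq|z|$. The division at the end is justified by the reduction to the case $p\neq q$ made at the start.
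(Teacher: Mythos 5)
Your proposal is correct and follows essentially the same route as the paper: two applications of Lemma~\ref{lem:cheny-goldstein lemma} (once with $P(x)$ nearest $x$ and $P(y)\in C$, once with the roles swapped), summing the inequalities to get $\|P(x)-P(y)\|^2\leq \textup{Re}\langle P(y)-P(x),y-x\rangle$, and finishing with Cauchy--Schwarz. The only cosmetic difference is that you dispose of the case $P(x)=P(y)$ explicitly before dividing, which the paper leaves implicit.
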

\begin{proof}
By Lemma~\ref{lem:cheny-goldstein lemma}, $A\equiv\textup{Re}\langle P(x)-P(y),P(y)-y\rangle\geq 0$ and $B\equiv\textup{Re}\langle P(y)-P(x),P(x)-x\rangle\geq 0$. Rearranging terms in the inequality $A+B\geq 0$, we have
\begin{align}
     \textup{Re}\langle P(y)&-P(x),y-x\rangle \\&\geq \textup{Re}\langle P(y)-P(x),P(y)-P(x)\rangle \\
     &= \|P(y)-P(x)\|^2\;.
\end{align}
\ignore{

}
Then, by the Cauchy-Schwartz inequality,
\begin{align}
    \|P(y)&-P(x)\|\cdot\|y-x\|\\
    &\geq  |\langle P(y)-P(x),y-x\rangle|\\
    & \geq 
     \textup{Re}\langle P(y)-P(x),y-x\rangle \\
     &\geq  \|P(y)-P(x)\|^2\;.
\end{align}
\ignore{

}
Therefore, $\|P(y)-P(x)\| \leq \|y-x\|$.
\end{proof}

\section{Pseudo-Inverse Calculations for Table~\ref{tab:pinv1d}}\label{sec:1d pseudo inverse proofs}
In all the examples below, $T:V\rightarrow W$ and $V=W=\RR$.

Let $\bm{T(v)=v^2}$. For $w>0$ there are two sources, $v=\pm\sqrt{w}$, with the same norm. For $w<0$ there is no source, and the closest element in the image is $0$, whose only source is $0$. By BAS, a pseudo-inverse $\finv{T}$ must satisfy $\finv{T}(w) =\pm\sqrt{\max\{w,0\}}$, where the sign for each $w$ is arbitrary. Each such choice produces a $\{1,2\}$-inverse by Lemma~\ref{lem: equivalent meaning of MP1-2 different spaces}, since $\finv{T}(w)$ is a source of $w$ for $w\in T(V)$, and a pseudo-inverse of an element in $T(V)$, otherwise.

Let $\bm{T(v)=v\cdot\Ind{|v|\geq a}}$, $a\geq 0$ (hard thresholding). Assume that $a>0$. By BAS, a pseudo-inverse $\finv{T}$ must satisfy $\finv{T}(w) = w$ if $|w|\geq a$, $\finv{T}(w) = a$ if $\frac{a}{2}<w<a$, $\finv{T}(w) = -a$ if $-a<w<-\frac{a}{2}$, and $\finv{T}(w)=0$ if $|w|\leq \frac{a}{2}$. More compactly, $\finv{T}(w)=\sgn(w)\cdot\Ind{|w|>\frac{a}{2}}\max(a,|w|\}$. MP2 may be verified directly, so $\finv{T}$ is valid and also unique. If $a=0$, then $T(v)=v$, and the unique pseudo-inverse of this bijection is clearly $\finv{T}(w)=w$, which agrees with the pseudo-inverse expression given for the case $a>0$. 

Let $\bm{T(v)=\sgn(v)\max\{|v|-a,0\}}$, $a\geq 0$ (soft thresholding). By BAS, any candidate pseudo-inverse $\finv{T}$ must have $\finv{T}(w) = w + a$ for $w>0$, $\finv{T}(w) = w - a$ for $w<0$ and $\finv{T}(0) = 0$. More compactly, $\finv{T}(w) = \sgn(w)(|w|+a)$. MP2 is easily verified directly, so our unique candidate is also valid.

Let $\bm{T(v)=\tanh(v)}$. The image of $T$ is $(-1,1)$, and as a result, for $w\notin (-1,1)$ there is no nearest element in the image, and the pseudo-inverse cannot be defined for such $w$. If we rather choose $W=(-1,1)$, then $T$ is a bijection, and therefore $\finv{T} = T^{-1} = \arctanh$.

Let $\bm{T(v)=\sgn(v)}$. For $w< -\frac{1}{2}$, the nearest value in $T(V)$ is $-1$, but there is no source with minimal norm. A similar situation holds for $w>\frac{1}{2}$. For $w=\pm\frac{1}{2}$ there are two nearest elements in $T(V)$, and the source with minimal norm is $0$. For $w\in (-\frac{1}{2},\frac{1}{2}) $ the nearest element in $T(V)$ is $0$ and its single source is $0$. To define a pseudo-inverse $\finv{T}$ we must restrict its domain to $[-\frac{1}{2},\frac{1}{2}]$. We have seen that to satisfy BAS, necessarily $\finv{T}(w)=0$. That is a $\{1,2\}$-inverse by Lemma~\ref{lem: equivalent meaning of MP1-2 different spaces}, since for $w=0$, $\finv{T}(0)$ is a source, and for $w\neq 0$, $\finv{T}(w) = \finv{T}(0)$.

Let $\bm{T(v)={\sgn}_\epsilon(v)}$, $0<\epsilon\in \RR$. Recall that $\sgn_\epsilon(v)=\min\{1,\max\{-1,v/\epsilon\}\}$. Here, $T(V)=[-1,1]$, and every $w\in (-1,1)$ has the single source, $\epsilon w$. For $w=1$, the source with minimal norm is $\epsilon$, and for $w=-1$, the source with minimal norm is $-\epsilon$. In addition, for every $w$ there is a single nearest point in $T(V)$. By Claim~\ref{cla:exist and unique}, there exists a unique pseudo-inverse, $\finv{T}:\RR\rightarrow\RR$. For $w\in [-1,1]$, it must satisfy $\finv{T}(w)=\epsilon w$, as already seen. For $w>1$, BAS necessitates that $\finv{T}(w) = \epsilon$, and similarly for $w<-1$, that $\finv{T}(w) = -\epsilon$. Thus,
\begin{align}
\finv{T}(w)&= \epsilon\min\{1,\max\{-1,w\}\} \\&= \epsilon\sgn_1(w)\;,
\end{align}
and this pseudo-inverse is valid and unique.

Let $\bm{T(v)=\exp(v)}$. The image of $T$ is $(0,\infty)$, and for $w\leq 0$ there is no nearest element in the image, and $\finv{T}(w)$ cannot be defined. Since $T$ is bijective from $\RR$ to $(0,\infty)$, for $w\in(0,\infty)$, $\finv{T}(w)=\log (w)$.

Let $\bm{T(v)=\sin(v)}$. The image of $T$ is $[-1,1]$. Every element in the image has infinitely many sources, but the one with the smallest norm is $\arcsin(w)\in [-\frac{\pi}{2},\frac{\pi}{2}]$. Every $w$ has a single nearest point in $[-1,1]$, so by Claim~\ref{cla:exist and unique}, there exists a unique pseudo-inverse $\finv{T}:\RR\rightarrow \RR$. By BAS, it has to be $\arcsin(1)$ for $w>1$ and $\arcsin(-1)$ for $w<-1$, and in summary, 
$\finv{T}(w)=\arcsin(\min\{1,\max\{-1,w\}\})$.

%%%=============== END APP B ================

\section{Relation to Regularized Loss Minimization}\label{sec:rlm}
There is an interesting connection between the BAS property and Tikhonov regularization. For an operator $T:\RR^k\rightarrow\RR^m$, given $w\in\RR^m$, BAS seeks $v\in\RR^k$ that minimizes $\|Tv-w\|$ and among such minimizers, then seeks to minimize $\|v\|$. As noted by \citet{ben2003generalized} in the context of linear operators, Tikhonov regularization seeks a different, yet related, goal of minimizing $\|Tv-w\|^2+\lambda\|v\|^2$, where $\lambda\in\RR^+$. Namely, the two-stage minimization process is replaced by a single step combining both, which in this case describes ridge regression.

Tikhonov regularization for nonlinear operators is central to modern machine learning, as part of the framework of regularized loss minimization (RLM). Briefly, in a supervised learning task, a learner is given a training set $S = \{(x_i,y_i)\}_{i=1}^m$, typically with $x_i\in\mathcal{X}=\RR^n$ and $y_i\in\mathcal{Y}=\RR$ or $\{-1,1\}$. The learner outputs a predictor $p(\cdot|v):\mathcal{X}\rightarrow\mathcal{Y}$ that is parametrized by a vector $v\in\RR^k$ and is identified with it. The goal of the learner is to minimize the loss, $\ell(v,(x,y))$, on unseen test examples $(x,y)$, where $\ell$ is a given non-negative function. For various reasons (see, e.g., \citep{shalev2014understanding}), RLM may obtain this goal by minimizing $L_S(v) + R(v)$, where
$L_S(v) = \frac{1}{m}\sum_{i=1}^m \ell(v,(x_i,y_i))$ is the training loss and $R:\RR^k\rightarrow \RR$ is a regularization term. 

Consider then the operator $T:\RR^k\rightarrow\RR^m$, defined by $T(v)=(p(x_1|v),\ldots,p(x_m|v))$. Let both spaces be equipped with the Euclidean norm, and let $w = (y_1,\ldots,y_m)$. Then $v=\finv{T}(w)$ should minimize $\|T(v)-w\|$, which is equivalent to minimizing $\sum_{i=1}^m (p(x_i|v) - y_i)^2$, or $mL_S$, where $\ell(v,(x,y))=(p(x_i|v) - y_i)^2$ is the squared loss. Among minimizers, one with minimal norm is chosen. This may be approximated by minimizing $\sum_{i=1}^m (p(x_i|v) - y_i)^2 + m\lambda\|v\|^2$, where $m\lambda$ is positive and sufficiently small. That way, the loss term is dominant, and the regularization term is effective only as a near-tie breaker. Of course, this is implied by the BAS property alone, and a pseudo-inverse should also satisfy MP2.

%--------------------- END APPENDIX

\end{appendices}

%% BioMed_Central_Bib_Style_v1.01

\end{document}